\documentclass[11pt,reqno]{amsart}

\usepackage{amsmath,mathtools}
\usepackage{amsthm}
\usepackage{amssymb}
\usepackage{verbatim}
\usepackage[dvipsnames]{xcolor}
\usepackage{hyperref}
\usepackage{mathtools}
\usepackage{enumitem} 
\usepackage{stmaryrd}
\usepackage{soul}

\usepackage{fullpage}
\def\thin{\hspace{.5pt}}

\usepackage[backgroundcolor=none]{todonotes}

\theoremstyle{definition}

\newtheorem{theorem}{Theorem}[section]
\newtheorem*{theorem*}{Theorem}

\newtheorem{example}[theorem]{Example}
\newtheorem{remark}[theorem]{Remark}

\newtheorem{lemma}[theorem]{Lemma}
\newtheorem{prp}[theorem]{Proposition}
\newtheorem{corollary}[theorem]{Corollary}
\newtheorem{lthm}{Theorem}
\newtheorem*{ques}{Question}

\newcommand{\R}{\mathbb{R}}
\newcommand{\Q}{\mathbb{Q}}
\newcommand{\C}{\mathbb{C}}

\newcommand{\Z}{\mathbb{Z}}
\newcommand{\Li}{\operatorname{Li}}

\newcommand{\sign}{\operatorname{sign}}
\tikzset{circ/.style = {fill, circle, inner sep = 0, minimum size = 3}}

\newcommand{\re}{\textnormal{Re}}
\newcommand{\im}{\textnormal{Im}}

\renewcommand{\=}{\;=\;}

\renewcommand{\Re}{\re}
\renewcommand{\Im}{\im}

\usepackage{color}

\newcommand{\Res}{\operatorname{Res}}

\usepackage{commath}
\usepackage{float}
\usepackage{enumitem}
\newcommand{\NN}{\mathbb{N}}
\newcommand{\RR}{\mathbb{R}}
\newcommand{\CC}{\mathbb{C}}
\newcommand{\ZZ}{\mathbb{Z}}

\newcommand{\SSS}{\mathcal{S}}
\newcommand{\VVV}{\mathcal{V}}
\newcommand{\UUU}{\mathcal{U}}
\newcommand{\fbr}[1]{\left\lfloor #1 \right\rfloor}
\newcommand{\br}{\del}
\renewcommand{\mod}[1]{\ (\mathrm{mod}\ #1)}
\tikzset{circ/.style = {fill, circle, inner sep = 0, minimum size = 3}}
\usetikzlibrary{arrows.meta}
\usetikzlibrary{decorations.markings}
\usetikzlibrary{decorations.pathmorphing}
\usetikzlibrary{positioning}
\usetikzlibrary{intersections}
\usepackage{tkz-euclide}
\pgfarrowsdeclarecombine{twolatex'}{twolatex'}{latex'}{latex'}{latex'}{latex'}
\pgfarrowsdeclarecombine{twolatex'}{twolatex'}{latex'}{latex'}{latex'}{latex'}
\tikzset{->-/.style = {decoration={markings,
 mark=at position #1 with {\arrow[scale=2]{latex'}}},
 postaction={decorate}}}
\tikzset{-<-/.style = {decoration={markings,
 mark=at position #1 with {\arrowreversed[scale=2]{latex'}}},
 postaction={decorate}}}

\definecolor{sageblue}{rgb}{0,0,1}
\definecolor{sagered}{rgb}{1,0,0}
\definecolor{sagegreen}{rgb}{0.0, 0.5019607843137255, 0.0}
\definecolor{sageorange}{rgb}{1.0, 0.6470588235294118, 0.0}

\newcommand{\bea}{\begin{equation}\begin{aligned}}
\newcommand{\eea}{\end{aligned}\end{equation}}
\allowdisplaybreaks

\title{On a conjecture of Andrews and almost alternating sign patterns}
\author[Kalita]{Jayashree Kalita}
\address{Department of Mathematics,
	Vanderbilt University, Nashville, Tennessee, United States}
\email{jayashree.kalita@vanderbilt.edu}
\author[Kundu]{Debanjana Kundu}
\address{Department of Mathematics and Statistics, University of Regina, Saskatchewan, Canada}
\email{debanjana.kundu@uregina.ca}
\author[Storzer]{Matthias Storzer}
\address{School of Mathematical Sciences,
	University College Cork, Cork, Ireland}
\email{mstorzer@ucc.ie}
\author[Wang]{Xintong Wang}
\address{Department of Mathematical Sciences, Durham University, Durham, UK}
\email{xintong.wang@durham.ac.uk}
\date{\today}
\keywords{Integer partitions, $q$-series asymptotics, Wright’s Circle Method}
\subjclass[2020]{Primary: 11P82; Secondary: 33D99}

\begin{document}

\begin{abstract}
In this paper, we prove a sign phenomenon first observed by Andrews for certain $q$-series from Ramanujan's Lost Notebook.  For three of the series considered by Andrews, namely $v_2(q)$, $v_3(q)$, and $v_4(q)$, we show that the coefficients are alternating in sign, with only a density-zero set of exceptions. Our approach yields precise asymptotic formulas for the coefficients via an adapted circle method, inspired by the work of Folsom--Males--Rolen--Storzer on the $q$-series $v_1(q)$, revealing an interplay between exponential growth and oscillatory behaviour. This interaction produces a dominant alternating sign factor, which governs the sign regularity observed numerically by Andrews. More broadly, we establish the same sign behaviour for explicit infinite families of $q$-hypergeometric series encompassing these examples, and show that it arises systematically from oscillatory asymptotics of these $q$-series near roots of unity. We introduce an additional family whose coefficients appear to exhibit similar sign regularity, suggesting that this phenomenon is widespread and may point towards a deeper underlying theory.
\end{abstract}

\maketitle

\section{Introduction}
\label{intro}
The study of $q$-series and their coefficients plays a central role in analytic number theory, dating back to the pioneering work of Hardy--Ramanujan~\cite{hardy1918asymptotic} on the partition function, and its extension to a remarkable exact formula by Rademacher~\cite{rademacher1938partition}.
In many classical settings, the coefficients of such series exhibit regular asymptotic behaviour, often governed by modular or automorphic properties.
In contrast to this classical paradigm, Andrews~\cite{andrews1986questions} noticed in Ramanujan's Lost Notebook~\cite{RlostIV,RlostV} that certain $q$-series exhibit strikingly unusual behaviour.
While the coefficients of many classical partition-theoretic $q$-series tend to infinity or remain bounded, Andrews pointed out five $q$-series whose coefficients grow differently.
His first example was
\begin{equation}\label{eq:sigma}
\sigma(q)
\;\coloneqq\; \sum_{n\geq0}\frac{q^{n(n+1)/2}}{(-q;q)_n} 
\coloneqq\sum_{n\geq 0} S(n)\, q^n,
\end{equation}
where $(a;q)_n \coloneqq \prod_{j=0}^{n-1} (1-aq^j)$ is the $q$-\textit{Pochhammer symbol}.
He observed that the coefficients grow very slowly, but nevertheless appear to be unbounded.
More precisely, he conjectured that 
\begin{equation}\label{conjS}
\limsup_{n\to\infty}\abs{S(n)} = \infty \text{ and } S(n)=0 \text{ for infinitely many }n.
\end{equation}

In their paper, Andrews--Dyson--Hickerson~\cite{andrews1988partitions} proved these conjectures by revealing an unexpected connection between $\sigma(q)$ and the arithmetic of $\mathbb Q(\sqrt 6)$.
Cohen~\cite{cohen1988q} subsequently placed this connection in the setting of Maaß wave forms.
Later developments, including the work of Zwegers~\cite{zwegersmock} 
and Li--Röhrig \cite{li2025mock}
on mock Maaß theta functions 
 and Zagier's~\cite{zagier2010quantum} theory of quantum modular forms, showed that this example was not an isolated curiosity, but an early manifestation of a wider phenomena connecting $q$-hypergeometric series, partitions, and automorphic objects.

In addition to $\sigma$, Andrews discussed four other $q$-series and their coefficients from Ramanujan's Lost Notebook \cite[(1.9)--(1.11)]{RlostIV};
namely,
\begin{align*}
v_1(q)
&
\;\coloneqq\;
 \sum_{n\geq0}\frac{q^{{n(n+1)}/2}}{(-q^2;q^2)_n} 
\= 1
 + q
 + q^6
 - q^7
 - q^8
 + q^9
 + \cdots
\;\eqqcolon\;
\sum_{n\geq 0} V_1(n)\thin q^n,
 \\[5pt]
 v_2(q) 
&
\;\coloneqq\;
 \sum_{n\geq1}\frac{q^{2n^2-n}}{(-q;q^2)_n} 
\= q
 - q^2
 + q^3
 - q^4
 + q^5
 - q^9
 + \cdots
\;\eqqcolon\;
\sum_{n\geq 0} V_2(n)\thin q^n,
\\[5pt]
v_3(q)
&
\;\coloneqq\;
\sum_{n\geq0}\frac{(-1)^nq^{n(n+1)/2}}{(-q;q)^2_n}
\= 1
 - q
 + 2\*q^2
 - 2\*q^3
 + 2\*q^4
 + \cdots
\;\eqqcolon\;
\sum_{n\geq 0} V_3(n)\thin q^n
,
\\[5pt]
v_4(q)
&
\;\coloneqq\;
\sum_{n\geq0}\frac{(-1)^nq^{2n^2}}{(-q;q^2)_{2n}}
\= 1
 - q^2
 + q^3
 - q^4
 + 2\*q^5
 + \cdots
\;\eqqcolon\;
\sum_{n\geq 0} V_4(n)\thin q^n.
\end{align*}

The coefficients $V_1(n)$, $V_2(n)$, $V_3(n)$, and $V_4(n)$ of these $q$-series have interpretations in terms of partitions and overpartitions.
For $V_1(n)$ and $V_4(n)$, these are given in \cite[Definitions~2 and~3]{RlostIV}, while for $V_2(n)$ and $V_3(n)$ they are provided in Section~\ref{sec:partitions}.
We note that upon performing more extensive SAGE computations during the course of this project, we realized that the interpretation for $V_4(n)$ requires minor modification, which we have addressed in Section~\ref{sec:partitions}.

A particularly interesting phenomenon about these $q$-series, as pointed out by Andrews, is that their coefficients ``appear to have great sign regularity''.
To illustrate this behaviour, the coefficients $V_2(n)$ for $0\leq n\leq 200$ are plotted in Figure~\ref{fig:coeffV2} (coloured by the parity of $n$) and the coefficients~$V_2(175),\ldots,V_2(185)$ are listed in Table~\ref{table:V2n}.

\begin{figure}[h]\label{fig:coeffV2}
\centering
\includegraphics[width=.6\textwidth]{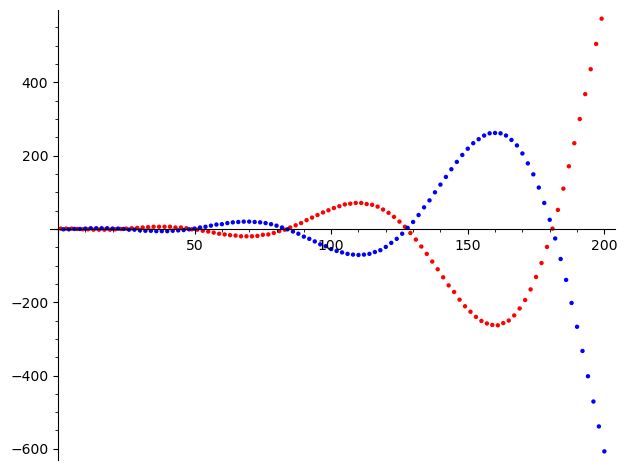}
\caption{The coefficients $V_2(n)$ for $0\leq n\leq 200$.}
\end{figure}

\begin{table}[H]
\label{table:V2n}
\begin{tabular}
{c | c c c c c c c c c c c c c c c c c c c c}
 $n$ & 175 & 176 & 177 & 178 & 179 & 180 & 181 & 182 & 183 & 184 & 185
 \\\hline
 $V_2(n)$ &$-131$ & $+113$ & $-93$ & $+71$ & $-49$ & $+25$ & $+1$ & $-26$ & $+52$ & $-82$ & $+110$
\end{tabular}
\caption{The coefficients~$V_2(175),\ldots,V_2(185)$}
\end{table}

Andrews made precise conjectures about the coefficients $V_1(n)$ and suggested that analogues should be true for $V_{2}(n),V_{3}(n)$, and $V_4(n)$.
These conjectures have now been resolved for $V_1(n)$ in \cite{Bac26,folsom2023oscillating}.
More precisely, \cite{folsom2023oscillating} proves Conjecture 4 and a density-one version of Conjecture 3 of Andrews, while \cite{Bac26} proves Andrews’ Conjectures 5 and 6.
This paper establishes the analogues of \cite[Conjectures 3 and 4]{andrews1986questions} for $V_2(n)$, $V_3(n)$, and $V_4(n)$, and develops a general framework for similar functions.
The main result of our paper is the following theorem 
which confirms the predictions made by Andrews, and is proved in Section~\ref{sec:pfs}.
It would be interesting to extend the results from \cite{Bac26} to the functions $V_2(n)$, $V_3(n)$, and $V_4(n)$.

First, let us recall that a set $A$ is said to have \textit{density} 
$\mathfrak{d}$ if 
\[
\lim_{X \to \infty} \frac{|A \cap \{1, 2, \ldots, X\}|}{X} = \mathfrak{d}.
\]

\begin{lthm}
\label{thm:sgnpattern}
The sequences $V_2(n),V_3(n)$, and $V_{4}(n)$ satisfy an \emph{almost alternating sign pattern}, meaning that
the following assertions are true for $j=2, 3, 4$.
\begin{enumerate}[label=\textup{(}\roman*\textup{)}]
 \item The sequence $\abs{V_j(n)} \to \infty$ as $n\to \infty$ away from a set of density $0$.
 \item For almost all $n$, the coefficients $V_j(n)$ and $V_j(n+1)$ have opposite signs.
\end{enumerate}
\end{lthm}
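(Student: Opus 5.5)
Following the approach of Folsom--Males--Rolen--Storzer for $v_1(q)$, I will derive an asymptotic formula for $V_j(n)$ from a circle-method analysis of $v_j$ near the dominant singularity. Since the signs alternate, that singularity should be $q=-1$ rather than $q=1$. Concretely, I will substitute $q\mapsto -q$ and study $f_j(q):=v_j(-q)=\sum_n (-1)^nV_j(n)q^n$. The goal is an asymptotic of the shape
\[
(-1)^n V_j(n) \= C_j\, n^{\alpha_j} e^{\beta_j\sqrt{n}}\Bigl(\cos\bigl(\gamma_j\sqrt{n}+\delta_j\bigr)+o(1)\Bigr)+O\bigl(n^{\alpha_j'}e^{\beta_j\sqrt n}\bigr)
\]
with $\alpha_j'<\alpha_j$ and $\gamma_j\neq 0$. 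An equivalent route is a main term $\Re\bigl(c\, n^{\alpha} e^{(\beta+i\gamma)\sqrt n}\bigr)$ whose error is smaller by a power of $n$. The oscillation comes from a complex saddle point, as in the $v_1$ case.

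\textbf{Step 1 (asymptotics near $q=-1$).} Set $q=-e^{-z}$ in the sums defining $v_2,v_3,v_4$. I will rewrite each summand using $(-q;q^2)_n$ and related Pochhammer symbols, so that after the sign change one gets a ratio of ordinary Pochhammer symbols times a quadratic exponential. Euler--Maclaurin summation (or a saddle-point analysis of the sum over $n$, treating $n\approx x/z$) then gives, for $z$ in a cone $|\arg z|<\pi/2-\epsilon$, an expansion
\[
f_j(e^{-z}) \sim \lambda_j z^{\kappa_j}\exp\Bigl(\frac{\theta_j}{z}\Bigr)
\]
with complex $\theta_j$ (with $\Re\theta_j>0$), possibly a sum of two conjugate such terms. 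These arise as critical values of dilogarithm-type functions $\operatorname{Li}_2$, exactly as in the $v_1$ analysis. I expect this saddle-point computation to be the main obstacle. It requires locating the saddle of the summand's phase $\Phi(x)$, built from $\operatorname{Li}_2(-e^{-x})$-type terms plus $x^2$, and checking that it is nondegenerate. It also requires justifying uniformity in $z$, controlling tails of the $n$-sum, and checking that the real parts of the critical values beat those of any competing critical points.

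\textbf{Step 2 (minor arcs).} I will show that near every other root of unity $\zeta\neq -1$, including $q=1$, $|f_j(q)|$ is exponentially smaller, i.e.\ bounded by $\exp((\Re\theta_j-\eta)/\Re z)$ for some $\eta>0$. For this I will bound the summands termwise with $|(-q;q)_n|^{-1}$-type estimates, as Folsom--Males--Rolen--Storzer do. Combining this with Step 1 via Wright's circle method (Cauchy integral on the circle of radius $e^{-1/\sqrt n}$-scale) yields the asymptotic above, with main term a sum of two conjugate contributions $\lambda e^{2\sqrt{\theta n}}$.

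\textbf{Step 3 (deducing (i) and (ii)).} Write the main term as $A(n)\cos(\gamma\sqrt n+\delta)$ with $A(n)\to\infty$. The set of $n$ where $|\cos(\gamma\sqrt n+\delta)|<\epsilon$ has density $O(\epsilon)$, because $\gamma\sqrt n$ increases by $o(1)$ per step and is equidistributed modulo $\pi$ in the density sense. Off that set the error is negligible, so $|V_j(n)|\to\infty$, which gives (i) after letting $\epsilon\to0$. Moreover, $\cos(\gamma\sqrt{n+1}+\delta)-\cos(\gamma\sqrt n+\delta)=O(n^{-1/2})$, so the cosine keeps its sign from $n$ to $n+1$ except on the same small set. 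Hence $(-1)^nV_j(n)$ and $(-1)^{n+1}V_j(n+1)$ share a sign, which gives (ii) off a density-zero set.
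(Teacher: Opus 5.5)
\textbf{Step 1 has a genuine gap.} Your Step 3 is sound, and more elementary than the paper's route: the paper uses the discrepancy bound $D_N\ll N^{-1/2}$ for $R\sqrt n \bmod 1$ to control windows of width $C/\sqrt n$, whereas you use equidistribution at a fixed $\epsilon$ plus a diagonal argument. The circle-method architecture of Step 2 also matches the paper's. The problem is that the method you describe in Step 1 cannot produce the asymptotics of $v_j$ near $q=-1$.

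After substituting $q=-e^{-z}$, each series is an alternating sum with massive cancellation. For example,
\[
v_2(-e^{-z})=\sum_{n\ge 1}\frac{(-1)^n e^{-(2n^2-n)z}}{\prod_{k=0}^{n-1}\bigl(1-e^{-(2k+1)z}\bigr)},
\]
whose individual terms reach size about $e^{\pi^2/(30z)}$ for real $z$, while the sum tends to $-1$. For $v_4$ the terms reach $e^{\pi^2/(20z)}$, yet the sum is $O(z^{-1/2})$ on the real axis. Consequently:
\begin{itemize}
\item Euler--Maclaurin, applied to even and odd $n$ separately, only recovers a power series in $z$ (the $\phi$-part).
\item A saddle point of a real phase $\Phi(x)$ built from $\Li_2$-terms and $x^2$ yields a spurious real critical value such as $\pi^2/30$.
\end{itemize}
Your expectation $\Re\theta_j>0$ is a symptom of this. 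The true exponents are $W_2=-\frac{\pi^2}{24}+\frac{i}{2}D(e^{\pi i/3})$ and $W_4=\frac{i}{2}D(e^{\pi i/3})$. Both have $\Re W_j\le 0$, so $e^{W_j/z}$ is large only for non-real $z$.

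\textbf{The missing idea} is to turn the sign $(-1)^n$ into an exponential phase before doing any saddle-point analysis. For $q=-e^{-z}$, the paper writes $\mathcal V_{r,b}(q)$ as Watson's integral
\[
\frac{i}{2(-q;q^2)_\infty}\int_{L_\infty}e^{-(2s^2+bs)z}\,(-e^{-2rsz}q;q^2)_\infty\,\frac{ds}{\sin(\pi s)}.
\]
It then moves the contour through the two saddles; the residues crossed give $\phi_{r,b}(-1)=-1$. Off the real axis it uses $1/\sin(\pi s)\approx\mp 2i e^{\pm\pi i s}$ for $\pm\Im s>0$. This adds the linear term $-\sign(\Re(v/\varphi))\pi v$ to the phase
\[
f_r(v)=-\tfrac12\Li_2(e^{-2riv})+2v^2-\sign(\Re(v/\varphi))\pi v .
\]
The saddles of $f_r$ satisfy $e^{-2riv_0}=e^{\pm\pi i/3}$, and their critical values produce $W_{2r}$.

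For $v_3$, the sign at $q=-e^{-z}$ has period $4$, so a direct treatment would need a different kernel. The paper sidesteps this by using $v_3(q)=v_1(-q)-2v_4(q)+2$ together with the known asymptotics of $v_1$.

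\textbf{Your minor-arc criterion also needs renormalising.} On $|q|=e^{-\lambda}$ the major-arc term peaks at
\[
\max\bigl|e^{W/z}\bigr|=e^{(\Re\sqrt W)^2/\lambda},
\]
attained at non-real $z$. So the minor arcs need the bound $|v_j|\le\exp\bigl(((\Re\sqrt W)^2-\eta)/\lambda\bigr)$. Your bound $\exp((\Re\theta_j-\eta)/\Re z)$, evaluated at the true $\theta_j$, would demand exponential decay. That already fails near $q=1$, where $v_j$ tends to a nonzero constant.
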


We will explain Theorem~\ref{thm:sgnpattern} by establishing precise asymptotic formulae for the coefficients, which depend on the parity of $n$ and feature an exponential term together with an oscillating factor.
The following theorem is proved in Section~\ref{sec:circle}.

Recall the \emph{Bloch--Wigner dilogarithm}
$D(e^{i\theta}) \coloneqq \Im(\Li_2(e^{i\theta}))$, and more generally the definition 
\[
D(x) := \Im(\Li_2(x))+\arg (1-x) \log\abs{x}.
\]

\begin{lthm}
\label{thm:B}
For $j\in \{2,3,4\}$, as $n\to\infty$, the following asymptotics hold
\begin{equation}
\label{eq:Viasymp}
\begin{split}
V_j(n)\!\!\!
\= (-1)^{n}\alpha_j\frac{e^{2\sqrt{n}\Re\left(\sqrt{W_j}\right)}}{\sqrt{n}}
\cos\left(2\sqrt{n}\Im\bigl(\sqrt{W_j}\bigr)
+ \frac{\pi}{4}\right)
\!\left(1+O\left(n^{-\frac{1}{2}}\right)\right)
+ O\Biggl(\frac{e^{\sqrt{n}\Re\bigl(\sqrt{W_j}\bigr)}}{\sqrt{n}}\Biggr), 
\end{split}
\end{equation}
where
\begin{align*}
&\alpha_2 \= \frac 1{\sqrt{2}\sqrt[4]{3}},\qquad\qquad\qquad\quad
&&W_2 \= -\frac{\pi^2}{24}+D(e^{\pi i/3})\frac i2,\\
&\alpha_3 \= -\frac{1}{\sqrt[4]{3}},\qquad\qquad
&&W_3 \= D(e^{\pi i/3})\frac i2,\\
&\alpha_4 \= \frac1{2\sqrt[4]{3}},\qquad\qquad
&&W_4 \= D(e^{\pi i/3})\frac i2,
\end{align*}
and $D(e^{\pi i/3})=1.014942\cdots$.
\end{lthm}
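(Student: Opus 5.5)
The plan is to run the adapted Wright circle method of Folsom--Males--Rolen--Storzer for $v_1(q)$, with the dominant singularity of each $v_j$ at $q=-1$; this is where the factor $(-1)^n$ comes from. I would substitute $q=-e^{-z}$ with $\Re(z)>0$ small. First, I would rewrite each summand of $v_j(-e^{-z})$ so that its $q$-Pochhammer denominator becomes a product of the shape $\prod_k(1\pm e^{-kz})$ or $\prod_k(1\pm i e^{-kz})$; the latter shape comes from $(-q;q^2)_n$ and $(-q^2;q^2)_n$ at $q=-1$, and sign issues would be tracked by splitting $n$ according to parity. Then I would apply Euler--Maclaurin, i.e.\ the standard asymptotic of $\log(x;e^{-z})_{m}$ in terms of $\Li_2$. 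This writes the $m$-th summand as
\[
\exp\!\Bigl(\tfrac{1}{z}F_j(e^{-mz})+G_j(e^{-mz})+O(|z|)\Bigr),
\]
where $F_j$ is an explicit combination of $\Li_2$ terms and quadratic logarithms coming from the exponents $q^{n(n+1)/2}$, $q^{2n^2-n}$ and $q^{2n^2}$.

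Second, I would replace the sum over $m$ by an integral, which is a Riemann sum with step $z$, and evaluate that integral by the saddle point method in the variable $x=e^{-mz}$. The critical-point equation $F_j'(x)=0$ should reduce to an algebraic equation with roots at sixth roots of unity $e^{\pm\pi i/3}$. The critical values, after using the standard functional equations for $\Li_2$, should simplify to $F_j=W_j$ and $\overline{W_j}$, with $D(e^{\pi i/3})$ entering through $\Im\Li_2(e^{\pi i/3})$. The extra $-\pi^2/24$ for $j=2$ would come from the shift in the quadratic exponent. This step gives, uniformly for $z$ in a cone $|\Im z|\le \Delta\Re z$,
\[
v_j(-e^{-z}) \;=\; \beta_j\, e^{W_j/z}(1+O(|z|)) + \overline{\beta_j}\, e^{\overline{W_j}/z}(1+O(|z|)),
\]
where the constants $\beta_j$ come from the Gaussian integral together with $G_j$ at the saddle.

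Third, I would apply Cauchy's formula on a circle of radius $e^{-\eta}$ passing through $q=-1$. On the major arc I would insert the expansion above and choose the saddle $z=\sqrt{W_j/n}$ for the first term and its conjugate for the second. The standard Wright-type Bessel evaluation then gives
\[
\frac{\beta_j\,W_j^{1/4}}{2\sqrt{\pi}\,n^{3/4}}\,e^{2\sqrt{nW_j}}
\]
plus its conjugate. Combining the two conjugates produces $e^{2\sqrt n\Re\sqrt{W_j}}\cos\bigl(2\sqrt n\Im\sqrt{W_j}+\phi\bigr)$. I expect the phase to work out to $\pi/4$ and the powers of $n$ to combine with the power of $z$ in $\beta_j$ to give the stated $1/\sqrt n$ and $\alpha_j$. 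This is a bookkeeping check, in which the $\sqrt[4]{3}$ should come from the Hessian $F_j''$ at $e^{\pi i/3}$.

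The step I expect to be the main obstacle is the minor-arc bound, together with the uniformity of the near-$(-1)$ expansion. A cancellation-heavy sum like $v_j$ has no modular transformation law to control it away from $q=-1$. Here I would follow the $v_1$ strategy: bound $|v_j(q)|$ termwise by $\sum_m |q|^{(\cdot)}/|(\ldots;q)_m|$. Near $q=1$, and near other roots of unity $\zeta$ of order at least $3$, I would use the same $\Li_2$ asymptotics with $\zeta$ inserted, and show that the resulting exponent has real part at most $\Re(W_j)/(2\Re z)$, say. This would yield the error term $O\bigl(e^{\sqrt n\Re\sqrt{W_j}}/\sqrt n\bigr)$. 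Radii are tied to $n$ through $\eta\asymp n^{-1/2}$, and the hardest case is controlling $|1/(-q;q^2)_m|$ uniformly on the circle away from $-1$.
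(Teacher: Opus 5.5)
There is a genuine gap in your second step, and that step carries the whole difficulty. At $q=-e^{-z}$ one has $(-q;q^2)_n=(e^{-z};e^{-2z})_n=\prod_{k<n}(1-e^{-(2k+1)z})$. There are no factors $1\pm ie^{-kz}$; those arise near $q=\pm i$. Hence $v_2(-e^{-z})=\sum_{n\ge1}(-1)^n e^{-(2n^2-n)z}/(e^{-z};e^{-2z})_n$ and $v_4(-e^{-z})=\sum_{n\ge0}(-1)^n e^{-2n^2z}/(e^{-z};e^{-2z})_{2n}$.

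The small factors $1-e^{-(2k+1)z}$ make the individual terms exponentially large. For $v_2$ they have size $e^{F(nz)/z}$ with $F(u)=\frac{\pi^2}{12}-\frac12\Li_2(e^{-2u})-2u^2$. The sign $(-1)^n$ then causes massive cancellation: for real $z$ the largest terms are about $e^{0.33/z}$, yet $v_2(-e^{-z})\to-1$.

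If you split by parity and replace each sum by a Riemann integral, the two parity classes have the same Euler--Maclaurin expansion. It cancels to all orders, so nothing survives. Moreover, your $F_j$ is built only from $\Li_2$ and the quadratic exponent, so its saddle-point equation is $F'(u)=0$. That is $1-y=y^2$ with $y=e^{-2u}$ (for $v_4$, $(1-y)^2=y$), whose roots are real, not sixth roots of unity.

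The points $e^{\pm\pi i/3}$ and the values $W_j$ appear only once you write $(-1)^n=e^{\pm\pi i n}$ and move it into the phase. Then $F'(u)=\pm i\pi$ gives $(1-e^{-2u})e^{4u}=-1$, i.e. $y^2-y+1=0$. This is the missing idea, and the paper's Watson-type representation supplies it. Lemma~\ref{le:integrep} writes $\VVV_{r,b}(q)=\frac{i}{2(-q;q^2)_\infty}\int_{L_\infty}e^{-(2s^2+bs)z}(-e^{-2rsz}q;q^2)_\infty\frac{ds}{\sin(\pi s)}$. After $v=-izs$, the kernel $1/\sin(\pi iv/z)$ contributes the linear term $-\sign(\Re(v/\varphi))\pi v$ to $f_r(v)$, whose saddles satisfy $e^{-2riv_0}=e^{\pm\pi i/3}$. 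This is Poisson summation at half-integer frequencies, which no Riemann-sum approximation can see. Lemma~\ref{le:pohest} is then applied to the infinite products inside this integrand, not summand by summand.

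Several further points follow.
\begin{itemize}
\item Moving the contour across the poles at $iz\ZZ_{\le0}$ produces the residue constant $\phi_{r,b}(-1)=-1$, which your expansion of $v_j(-e^{-z})$ omits. For $v_2$ this constant is the leading term near $\arg z=0$, because $\Re W_2<0$ makes both exponentials negligible there. So your claimed uniform expansion in a cone around the real axis is false as written, although the constant only feeds the error term for the coefficients.
\item Your termwise minor-arc bound suffers from the same cancellation. Near roots of unity of order $\equiv2\pmod 4$, including $-1$ itself, termwise bounds overestimate $|v_j(q)|$ by an exponential factor, so they cannot be expected to give the stated error term. The paper instead uses the expansions of Proposition~\ref{prp:asympv2}, with growth $e^{W/(d^2z)}$ largest for $d=3$, and Theorem~\ref{thm:asympv2O1} for other orders.
\item The paper does not treat $v_3$ directly. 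It uses $v_3(q)=v_1(-q)-2v_4(q)+2$ together with $v_1(e^{-z})=O(1)$, which gives $W_3=W_4$ and $\alpha_3=-2\alpha_4$ immediately.
\end{itemize}

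Your major-arc Wright--Bessel computation is fine and matches the paper's.
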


More generally, our results in Sections~\ref{sec:circle} and~\ref{sec:pfs}
show that the phenomena observed by Andrews are not isolated.
In particular, we study the asymptotics and prove an almost alternating sign pattern for the coefficients of a class of $q$-hypergeometric series exhibiting special behaviour as $q$ approaches a root of unity; see Theorems~\ref{Fourier: mainasym} and \ref{thm:gen}.
We provide explicit examples of infinite families (beyond Andrews's $q$-series) where our results hold; see Corollaries~\ref{cor: asympmain} and \ref{cor: asympU}.

The functions studied in this paper are closely related to $v_1(q)$, but they belong to a different analytic family.
Our methods apply to $q$-series whose summands have a generic shape like
\[
\frac{(-1)^{(b+1)n}q^{2n^2+bn}}{\br{-q; q^2}_{rn}} \qquad \text{ with } b\in\ZZ \text{ and } r\in\{1,2\},
\]
whereas $v_1(q)$ involves the denominator $(-q^2;q^2)_n$.
Our results address the remaining Andrews series $v_2(q)$, $v_3(q)$, and $v_4(q)$, together with a broader family to which they naturally belong.

To illustrate the asymptotics of the coefficients $V_2(n)$, the values $(-1)^n \sqrt{n}e^{-2\sqrt{n}\Re(\sqrt{W_2})}V_2(n)$ for $0\leq n\leq 500$ are plotted in Figure~\ref{fig:scaledV2}.
Similar observations can be made for the coefficients $V_3(n)$ and $V_4(n)$.
\begin{figure}[H]\centering
\includegraphics[width=0.6\textwidth]{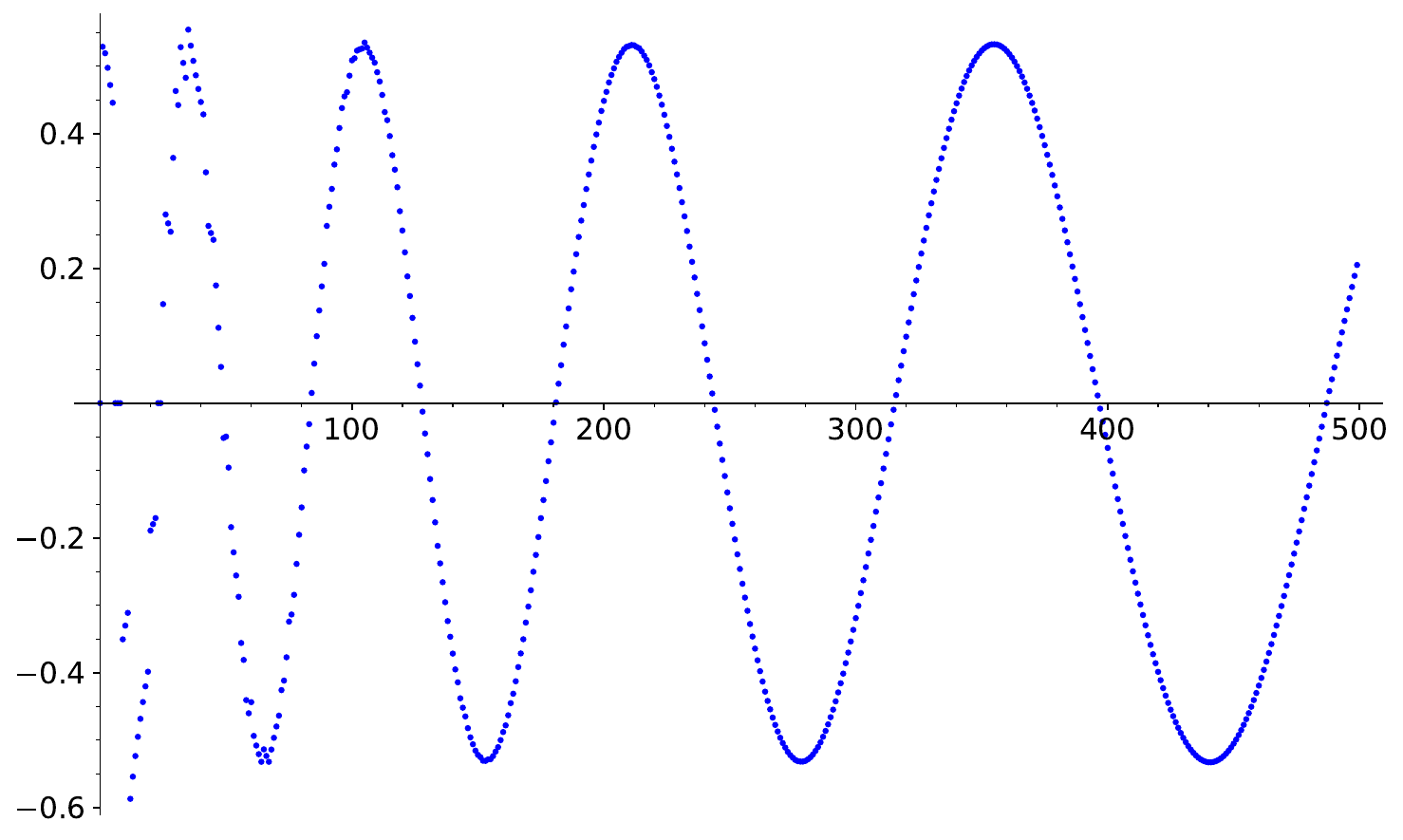}
\caption{The values $(-1)^{n}\sqrt{n}e^{-2\sqrt{n}\Re(\sqrt{W_2})} V_2(n)$ for $0\leq n\leq 500$.}
\label{fig:scaledV2}
\end{figure}

Let $j\in \{2,3,4\}$.
Following the philosophy of the circle method, the asymptotics of the coefficients $V_j(n)$ can be obtained from the asymptotics of the generating function $v_j(q)$ for $q$ near a root of unity.
For example, when $j=2$, the main contribution for $v_2(q)$ comes from $q$ near the dominant singularity at $-1$ and we have
\bea
\label{eq:asympv2zeta1}
v_2(-e^{-z})
\=\left[\sqrt{\frac{\pi}{4\sqrt{3} z}}
\biggl(e^{W_2/z}(1+i) +e^{\overline{W}_2/z}(1-i)
\biggr)
-1\right]
\thin (1+O(z)),
\eea
as $z\to 0$ on a ray in the right half-plane.
Applying the circle method, we see that this causes the factor $(-1)^n$ in~\eqref{eq:Viasymp} producing the \textit{dominant alternating sign factor}.
The oscillation of the corresponding generating functions $v_j(q)$ gives rise to an oscillation in the coefficients in~\eqref{eq:Viasymp}.
Together, this implies the almost alternating sign pattern with exceptions for the pattern near the zeros of the cosine factor in Theorem~\ref{thm:B}.
More generally, we show in Theorem~\ref{thm:gen} that coefficients with an asymptotic behaviour as in~\eqref{eq:Viasymp} satisfy the sign pattern described in Theorem~\ref{thm:sgnpattern}.

With an outlook towards future work, we define a new family of $q$-series for $k\in\Z_{\geq2}$,
\[
v^{\cbr{k}}(q)
\coloneqq 
\br{q;q}_\infty\sum_{n\geq 1}\frac{q^{n(n-1)}}{(-q;q^k)_n}\eqqcolon
\sum_{n\geq 0} V^{\cbr{k}}(n)q^n.
\]
Our numerical experiments led us to question if the following sequence of length $k$,
\[
V^{\{k\}}(n),V^{\{k\}}(n+1), \ldots,V^{\{k\}}(n+k-1)
\]
satisfies a sign regularity for almost all values of $n$; this is made precise in Section~\ref{sec:future}.
An ambitious goal is to reformulate these kinds of questions (and results) within an automorphic framework.

The rest of the paper is organized as follows.
In the same spirit as Andrews \cite{RlostIV} who provides a partition-theoretic interpretation for $V_1(n)$, we give interpretations of the coefficients $V_2(n)$, $V_3(n)$, and $V_4(n)$ in terms of partitions and overpartitions in Section~\ref{sec:partitions}.
Section~\ref{sec:preliminaries} is preliminary in nature; we recall relevant background and give asymptotic results.
In Section~\ref{sec:asympgenf}, we prove asymptotics of $q$-series generalizing $v_2(q)$ and $v_4(q)$ as $q$ approaches a root of unity.
The asymptotics for $v_3(q)$ follows from an identity that relates $v_1$, $v_3$, and $v_4$; see~\eqref{eq:pointed by KB}.
Next, we perform the circle method to obtain asymptotics for the coefficients of these $q$-series in Section~\ref{sec:circle}.
We then show in Section~\ref{sec:pfs} that any sequence with asymptotics of the form \eqref{eq:Viasymp} satisfies the sign pattern described in Theorem~\ref{thm:sgnpattern}; see Theorem~\ref{thm:gen}.
In Section~\ref{sec:future}, we introduce a new
family of $q$-series whose coefficients seem to follow a similar sign pattern.

\section*{Acknowledgements} 
We are grateful to Koustav Banerjee for pointing out equations~\eqref{eq:pointed by KB} and~\eqref{eq:pointed by KB2}, and to Joshua Males for helpful discussions during the initial stages of this project.
We thank Kathrin Bringmann, Herbert Gangl, Robert Osburn, and Larry Rolen for their encouragement and continued support, as well as George Andrews and Peter Sarnak for their interest in this work.
This project was initiated when DK was supported by a PIMS Postdoctoral Fellowship.
She acknowledges the support of an AMS-Simons Early Career Travel Grant (2024-25) and NSERC Discovery Grant RGPIN-2026-07384 and DGECR-2026-00254.
She is also grateful for the research environment provided by Lodha Mathematical Science Institute (LMSI) Mumbai where she was in residence during Fall 2025.
During the preparation of this article, MS was partially funded by the Max-Planck-Society and the Irish Research Council Advanced Laureate Award IRCLA/2023/1934.

\section{Combinatorial interpretations}
\label{sec:partitions}

In this section, we discuss the interpretation of the coefficients $V_2(n),V_3(n),$ and $V_4(n)$ in terms of partitions and overpartitions.
It would be beneficial to give a combinatorial explanation of the almost alternating sign pattern of $V_2(n),V_3(n),$ and $V_4(n)$.

An \textit{(integer) partition} of a non-negative integer $n$ is a way of writing $n$ as a sum of positive integers.
As explained in \cite[Definition~3]{RlostIV},
the coefficients $V_4(n)$ count the difference between the number of partitions of $n$ into an even number of parts and an odd number of parts such that the partitions satisfy the following conditions:
\begin{enumerate}[label=(\alph*)]
 \item no part is divisible by $4$,
 \item at least one even part,
 \item each integer congruent to $2 \mod{4}$ and at most equal to the largest part in the partition occurs exactly once,
 \item the largest part is incongruent to $1\mod4$.
\end{enumerate}
We emphasize that Andrews's interpretation of $V_4(n)$ included the first three conditions \textit{only}.
This coincides with data when $n\leq 6$.
However, further calculations show that condition (d) also needs to be included in the partition interpretation of $V_4(n)$.

We begin this section by reviewing some basic notions.
Recall from \cite{corteel2004overpartitions} that an \emph{overpartition} of a non-negative integer $n$ is a non-increasing sequence of natural numbers that sum to $n$, where the first occurrence of any part \textit{may be} overlined.
For example, the overpartitions of $3$ are $(3)$, $(\overline{3})$, $(2,1)$, $(\overline{2},1)$, $(2,\overline{1})$, $(\overline{2},\overline{1})$, $(1,1,1)$, $(\overline{1},1,1)$.
The total number of overpartitions of $n$ is denoted by $\overline{p}(n)$ and is given via the generating function
\[
\sum_{n\geq 0} \overline{p}(n) q^n \= \prod_{n\geq 1} \frac{1+q^n}{1-q^n} \ = 1
 + 2\*q
 + 4\*q^2
 + 8\*q^3
 + 14\*q^4
 + \cdots \,.
\]

\subsection{Overpartition interpretation for \texorpdfstring{$V_2$}{}}

Consider overpartitions of $n$ for which there exists an integer 
$K \ge 1$ such that:
\begin{enumerate}[label = (\alph*)]
 \item Each positive integer of the form $4k-3$ with $1 \le k \le K$ (that is, each integer in $\{1,5,9,\dots,4K-3\}$) appears exactly once as an \emph{overlined} part.
 \item For each odd integer
 \[
 j \in \{1,3,\dots,2K-1\},
 \]
 additional occurrences of $j$ are permitted as non-overlined parts.
 \item No parts other than those specified in (a) and (b) are allowed to appear.
 \end{enumerate}
Let $P_{+}(n)$ (resp. $P_-(n)$) count the number of overpartitions satisfying (a), (b), and (c)
with an even (resp. odd) number of unmarked parts.

We have
\begin{align*}
\sum_{n\geq0}(P_+(n)-P_-(n))q^n
&=\sum_{K\geq1}
q^{1+5+9+\cdots+(4K-3)}
\sum_{\substack{m_1,\dots,m_{K}\ge 0}}
(-1)^{m_1+\cdots+m_{K}}
q^{m_1\cdot 1+m_2\cdot 3+\cdots+m_k(2K-1)}\\
&=\sum_{K\geq 1}\frac{q^{1+5+9+\cdots+ (4{K}-3)}}{(1+q)(1+q^3)\cdots (1+q^{2{K}-1})}
=\sum_{K\geq 1}\frac{q^{2K^2-K}}{
(-q;q^2)_{K}}=v_2(q).
\end{align*}

\begin{example}
{Suppose that $n=3$.
Only the partition $(\overline{1},1,1)$ contributes to $P_+(3)$ with $K=1$.
None of the other overpartitions contribute to either $P_+(3)$ or $P_-(3)$.}
\end{example}

\begin{example}
Now suppose that $n=7$.
We list all the \textit{fifteen} partitions of $7$: $(7)$, $(6+1)$, $(5+2)$, $(5+1+1)$, $(4+3)$, $(4+2+1)$, $(4+1+1+1)$, $(3+3+1)$, $(3+2+2)$, $(3+2+1+1)$, $(3+1+1+1+1)$, $(2+2+2+1)$, $(2+2+1+1+1)$, $(2+1+1+1+1+1)$, $(1+1+1+1+1+1+1)$.
The only values of $K$ that can be considered are $1$ or $2$.
When $K=1$, the only overpartition that can contribute is $(\bar{1}+1+1+1+1+1+1)$ and it contributes to $P_+(7)$. On the other hand, when $K=2$ we have $(\bar{1}+\bar{5}+1)$ which contributes to $P_-(7)$.
We see that $P_+(7)=P_-(7)=1$ and hence the coefficient of $q^7$ in $v_2(q)$ is $V_2(7) = 0$.
\end{example}
 
\subsection{Overpartition interpretation for \texorpdfstring{$V_3$}{}}

Consider marked overpartitions
of $n$ where markings are specified in the following way: \begin{enumerate}[label = (\alph*)]
 \item Each integer
 \[
 1,2,\dots,k
 \]
 (where $k\ge 1$ is the largest part) appears once and only once as an \emph{unmarked} part.

 \item For each $j\in\{1,2,\dots,k\}$, additional marked occurrences of $j$ are allowed.
 Each additional occurrence may be marked in one of two distinct ways; that is, each such copy may carry one of two overpartition-type decorations:
 \[
 \text{Type A marking},\qquad \text{Type B marking}.
 \]
 Parts of the same size but with different markings are regarded as distinct, and all required occurrences from (a) remain unmarked.
 The order of the markings does not matter.

 \item No parts other than those specified in (a) and (b) are allowed to appear.
 \end{enumerate}
Let $Q_{+}(n)$ (resp. $Q_-(n)$) count the number of overpartitions satisfying (a), (b), and (c)
with an even (resp. odd) number of parts.

Then
\begin{align*}
\sum_{n\geq0}\br{Q_+(n)-Q_-(n)}q^n
&=\sum_{k\geq 1} (-1)^k q^{1+2+\cdots+k}
 \sum_{m_1,\dots,m_k\ge 0}\sum_{n_1,\dots,n_k\ge 0}
 (-1)^{\sum_{j=1}^k(m_j+n_j)}
 q^{\sum_{j=1}^k j(m_j+n_j)}\\
&=\sum_{k\geq1}\frac{(-1)^k q^{1+2+3+\cdots+k}}{(1+q)^2 (1+q^2)^2 \cdots (1+q^k)^2}\\
&=\sum_{k\geq1}\frac{(-1)^k q^{k(k+1)/2}}{(-q;q)_k^2}=v_3(q).
\end{align*}

\begin{example}
Suppose that $n=3$.
The partition $(3)$ does not satisfy (a) and hence does not contribute to either $Q_{+}(3)$ or $Q_-(3)$.
Next we note that $(2,1)$ contributes to $Q_+(3)$ since it has two (even) parts.
This partition of $3$ cannot have Type-A or Type-B markings since it would violate (a).
Next we consider the partition $(1, 1, 1)$ of $3$. This does not contribute to $Q_+(3)$ or $Q_-(3)$ by (a).
But we can have Type-A and Type-B markings which contribute to $Q_-(3)$, namely $(1, 1^{A}, 1^{B})$, $(1, 1^{A}, 1^{A})$, and $(1, 1^{B}, 1^{B})$. 
So, $Q_+(3)=1, Q_-(3)=3$ and the coefficient of $q^3$ in $v_3(q)$ is $V_3(3) = -2$.
\end{example}

\section{Preliminaries}
\label{sec:preliminaries}
We begin by introducing the analytic setup required for the asymptotic analysis in later sections.

\subsection{Dilogarithms}\label{sec:polylogs}
We write $q=\zeta e^{-z}$, where $\zeta$ is a root of unity and $z$ lies in the right half-plane.
For our analysis of the asymptotic results as $z\to0$ on any ray in the right half-plane, it is necessary to work with non-standard branches of the logarithm suitable for our setting.

To this end, we fix $\varphi \in \mathbb{C}$ with $\abs{\varphi}=1$ and $\abs{\arg \varphi}<\frac{\pi}{2}$, and choose a branch of the logarithm, denoted by $\widetilde{\log}$, such that the function
\[
\Li_1^\varphi(e^{-iv}) \coloneqq -\widetilde{\log}(1-e^{-iv})
\]
has branch cuts precisely along the lines 
\[
\mathcal D\coloneqq \{v\in\C \;|\; \Re((v+2\pi n)/\varphi)=0, \ \Im(v)>0, n\in\Z\}.
\]
This corresponds to rotating the standard branch cuts by the angle $\arg(\varphi)$; see Figure~\ref{fig:li}.

\begin{figure}[H]
\centering
 \begin{tikzpicture}
 \draw [->, thick] (-4, 0) -- (4, 0) node [pos=1.07, left] {$\RR$};
 \draw [->, thick] (0, -2) -- (0, 2.4) node [pos=0.95, right] {$i\RR$};

 \node [circ] at (0, 0) {};
 \node [circ] at (-1.5, 0) {};
 \node [circ] at (-3, 0) {};

 \node [circ] at (1.5, 0) {};
 \node [circ] at (3, 0) {};
 \node [blue, circ] at (-1, 1.7) {};

 \draw [red, thick] (0, 0) -- (-2.5, 2.5) node[scale=0.75] [pos=1, above] {$\Re(v/\varphi)=0$};
 \draw [red, thick] (-1.5, 0) -- (-4, 2.5) node[scale=0.75] [pos=1, above left] {$\Re((v+2\pi)/\varphi)=0$};
 \draw [red, thick] (-3, 0) -- (-4, 1);
 \draw [red, thick] (1.5, 0) -- (-1, 2.5) node[scale=0.75] [pos=1, above right ] {$\Re((v-2\pi)/\varphi)=0$};
 \draw [red, thick] (3, 0) -- (0.5, 2.5);
 \draw [red, thick] (2, 2.5) -- (4, 0.5);
 \draw [blue, thick] (-1, 1.7) -- (2.3, -1.6) node [pos=0.9, right] {$-i\varphi x+v$};
 \draw [darkgray] (0, 0.4) arc(90:135:0.4) node [left] {\tiny{$\arg(\varphi)$}};

 \node [blue, left] at (-1, 1.7) {$v$};
 \node [below] at (-1.5, 0) {$-2\pi$};
 \node [below] at (-3, 0) {$-4\pi$};
 \node [below] at (1.5, 0) {$2\pi$};
 \node [below] at (3, 0) {$4\pi$};
 
\end{tikzpicture}
\caption{The branch cuts of $\Li_s^\varphi(e^{-iv})$, with $s=1, 2$ and $-i\varphi x+v$ for $x\geq 0$.}
 \label{fig:li}
\end{figure}

Following the standard construction of the dilogarithm, we define $\Li_2^\varphi$ for $v\in\CC \setminus \mathcal D$
by
\begin{align}
\label{eq:dilogdef}
\Li_2^\varphi(e^{-iv})
:= -\int_0^{e^{-iv}} \widetilde\log(1-u)\frac{du}{u}
= (-\varphi)\int_0^\infty \widetilde\log\bigl(1-e^{-\varphi x-iv}\bigr)\,dx,
\end{align}
where the path of integration is chosen so that it does not cross any branch cut of $\Li_1^\varphi(e^{-iv})$ as depicted in Figure~\ref{fig:li}.
We refer the reader to \cite{zagier2007dilogarithm} for more on the dilogarithm function.

As in the case of the standard polylogarithm, we define the \textit{polylogarithm} $\Li^{\varphi}_{-m}$ with the new branch cuts inductively by
\[
\Li_{-m}^\varphi(z)\coloneqq z\frac{d}{dz}\Li_{-m+1}^\varphi(z) 
\qquad\qquad (m \in \Z_{\ge0}).
\]
When $m\geq 0$, the function $\Li^{\varphi}_{-m}$ is a rational function; hence it is independent of $\varphi$ and the branch of~$\Li_1^\varphi$.

\subsection{Asymptotic tools}
In this section, we record basic asymptotic results which will be required for the proofs of our main results.
Here and throughout, we use the notation $e(x)\coloneqq e^{2\pi i x}$.

The results in the following lemma are standard, see, e.g., \cite[Lemma~2.1]{folsom2023oscillating}

\begin{lemma}
\label{le:est}
Let $\zeta=e(a/m)$ be a root of unity of order $m\in\NN$ and $a$ be an integer such that $\gcd(a,m)=1$.
Then the following assertions are true.
\begin{enumerate}[label = \textup{(}\arabic*\textup{)}]
 \item
 Let $\alpha\in\RR_{>0}$, $n_0\in\Z$ and for $t\in \CC$,
 set $\pm=\sign(\Re(t))$.
 As $\abs{t}\to\infty$ along a ray in $\CC$ with $\Re(t)\neq 0$,
 \[
 \sin(\alpha(it-n_0))=\mp\frac{1}{2i}\exp(\pm\alpha(t+n_0i)) 
 (1+o(t^{-L})) \text{ for all }L\in\NN.
 \]
 \item 
 Define the Dedekind sum $s(x,y)$ for coprime integers $x,y$ as 
 \bea\label{eq:dedekindsum}
 s(x,y)\coloneqq\sum_{l=1}^{y-1}\frac{l}{y}
\biggl(\frac{xl}{y}-\fbr{\frac{xl}{y}}-\frac{1}{2}\biggr).
 \eea
 As $z\to 0$ in the right half-plane, i.e., as $q=\zeta e^{-z}\to \zeta$,
 \[
 (q;q)_\infty = \exp
 \biggl(-\frac{\pi^2}{6m^2z}+\frac{z}{24}\biggr)
 \sqrt{\frac{2\pi}{mz}}e\biggl(\frac{s(-a,m)}{2}\biggr)(1 + o(z^L)) \text{ for all }L\in \NN.
 \]
 \item 
 Suppose that $m$ is even and consider the function
 \[
 Q(\zeta)=e\biggl(\frac{s(-a,m/2)-s(-a,m)}{2}\biggr).
 \]
We have, as $z\to 0$ in the right half-plane,
 \[
 (-q;q)_\infty=\exp\biggl(-\frac{\pi^2}{6m^2z}+\frac{z}{24}\biggr)Q(\zeta)
 (1+o(z^L)) \text{ for all }L\in\NN.
 \] 
 \end{enumerate}
\end{lemma}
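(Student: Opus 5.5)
The three assertions are classical modular-type asymptotics, and I would prove them by reducing each to the modular transformation of the Dedekind eta function, together with an elementary computation for (1).

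\emph{Assertion (1).} Write
\[
\sin(\alpha(it-n_0))=\frac{1}{2i}\bigl(e^{i\alpha(it-n_0)}-e^{-i\alpha(it-n_0)}\bigr)
=\frac{1}{2i}\bigl(e^{-\alpha t-i\alpha n_0}-e^{\alpha t+i\alpha n_0}\bigr).
\]
Suppose first that $\Re(t)>0$. Then the term $e^{\alpha(t+n_0 i)}$ dominates and
\[
\sin(\alpha(it-n_0))=-\frac{1}{2i}e^{\alpha(t+n_0i)}\bigl(1-e^{-2\alpha(t+n_0 i)}\bigr).
\]
On a ray with $\Re(t)\neq0$ we have $\Re(t)\asymp\abs{t}$. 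Hence the correction $e^{-2\alpha t}$ decays exponentially, which is $o(t^{-L})$ for every $L$. The case $\Re(t)<0$ is symmetric and gives the opposite sign, which yields the $\mp$ in the statement.

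\emph{Assertion (2).} Let $\zeta=e(a/m)$ and $q=\zeta e^{-z}$. Write $q=e(\tau)$ with $\tau=a/m+iz/(2\pi)$, so that
\[
(q;q)_\infty=e^{-2\pi i\tau/24}\,\eta(\tau).
\]
Pick $\gamma=\begin{pmatrix}\ast&\ast\\ m&-a\end{pmatrix}\in \mathrm{SL}_2(\Z)$, so that $m\tau-a=imz/(2\pi)$. The eta transformation law, with the Dedekind-sum multiplier in Rademacher's form, gives
\[
\eta(\gamma\tau)=\epsilon(\gamma)\sqrt{-i(m\tau-a)}\,\eta(\tau).
\]
Here $\gamma\tau=b/m+2\pi i/(m^2z)$ for a suitable $b$, and the phase $\epsilon(\gamma)$ is expressed through $s(-a,m)$ and $b$.

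Since $\Im(\gamma\tau)=\Re\bigl(2\pi/(m^2 z)\bigr)\to\infty$, we get
\[
\eta(\gamma\tau)=e^{2\pi i\gamma\tau/24}\bigl(1+O(e^{-c/\abs{z}})\bigr),
\]
and this error is $o(z^L)$. Collecting the factors produces:
\begin{itemize}
\item the term $e^{-\pi^2/(6m^2z)}$, coming from $e^{2\pi i\cdot 2\pi i/(24 m^2 z)}$;
\item the term $e^{z/24}$, coming from $e^{-2\pi i\tau/24}$ after removing $\zeta^{-1/24}$;
\item the factor $\sqrt{2\pi/(mz)}$, coming from inverting $\sqrt{-i(m\tau-a)}=\sqrt{mz/(2\pi)}$.
\end{itemize}
The main obstacle is the bookkeeping of the root of unity. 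One has to check that the multiplier, the phase $e^{2\pi i b/(24m)}$, and $\zeta^{-1/24}$ combine exactly to $e(s(-a,m)/2)$. I would do this using Rademacher's formula for $\epsilon(\gamma)$ together with reciprocity. Alternatively, since the paper says this lemma is standard, I would simply cite \cite[Lemma~2.1]{folsom2023oscillating} for the constant.

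\emph{Assertion (3).} Use the identity
\[
(-q;q)_\infty=\frac{(q^2;q^2)_\infty}{(q;q)_\infty}.
\]
Since $m$ is even, $q^2=\zeta^2e^{-2z}$, where $\zeta^2=e(a/(m/2))$ has order $m/2$ and $\gcd(a,m/2)=1$. Apply (2) to the denominator with $(a,m,z)$ and to the numerator with $(a,m/2,2z)$.

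\begin{itemize}
\item The exponential terms give
\[
-\frac{\pi^2}{6(m/2)^2(2z)}+\frac{\pi^2}{6m^2z}=-\frac{\pi^2}{3m^2z}+\frac{\pi^2}{6m^2z}=-\frac{\pi^2}{6m^2z}.
\]
\item The linear terms give $\dfrac{2z}{24}-\dfrac{z}{24}=\dfrac{z}{24}$.
\item The square-root factors cancel, because $(m/2)(2z)=mz$.
\item The phases combine to $e\bigl((s(-a,m/2)-s(-a,m))/2\bigr)=Q(\zeta)$.
\end{itemize}
The two errors $1+o(z^L)$ multiply to give $1+o(z^L)$, which completes the proof.
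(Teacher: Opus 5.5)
Your proposal is correct. The paper does not prove this lemma: it calls the estimates standard and cites \cite[Lemma~2.1]{folsom2023oscillating}. Your argument is the standard one behind that citation. You use an elementary computation for (1), the Dedekind eta transformation for (2), and the identity $(-q;q)_\infty=(q^2;q^2)_\infty/(q;q)_\infty$ for (3). That identity is exactly how the paper itself derives Lemma~\ref{le:est2} from this lemma.

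The bookkeeping you flag as the main obstacle closes in one line and needs no reciprocity. For $\gamma=\begin{pmatrix}A&B\\ m&-a\end{pmatrix}$ the multiplier is $\epsilon(\gamma)=\exp\bigl(\pi i\bigl(\frac{A-a}{12m}-s(-a,m)\bigr)\bigr)$. Multiplying $\epsilon(\gamma)^{-1}$ by the phase $e(A/(24m))$ from $e(\gamma\tau/24)$ and the phase $e(-a/(24m))$ from $e(-\tau/24)$ leaves exactly $e^{\pi i s(-a,m)}=e(s(-a,m)/2)$.

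One caveat applies equally to the paper's wording. The bound $O(e^{-c/\abs{z}})=o(z^L)$ requires $z\to0$ inside a fixed cone $\abs{\arg z}\le\frac{\pi}{2}-\delta$, for example along a ray. Without that restriction, $\Im(\gamma\tau)=\Re\bigl(2\pi/(m^2z)\bigr)$ need not tend to infinity.
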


We now derive two additional estimates that will be needed in our analysis.

\begin{lemma}
\label{le:est2}
Using the notation of Lemma~\ref{le:est}, the following asymptotics hold.
\begin{enumerate}
\item 
If $m$ is odd, then as $z\to 0$ in the right half-plane,
 \[
 \br{-q;q}_\infty=\exp\biggl(\frac{\pi^2}{12m^2z}+\frac{z}{24}\biggr)\frac{1}{\sqrt{2}}R(\zeta)
 (1+o(z^L)) \text{ for all }L\in\NN,
 \]
 where
 \[
 R(\zeta)=e\biggl(\frac{s(-2a,m) -s(-a,m)}{2}\biggr).
 \]
 \item Suppose that $m\equiv 2\mod 4$.
 As $z\to 0$ in the right half-plane, for all $L\in\NN$,
 \[
 (-q;q^2)_\infty=\exp\biggl(-\frac{\pi^2}{3m^2z}-\frac{z}{24}\biggr)\sqrt{2}\frac{Q(\zeta)}{R(\zeta^2)}
 (1+o(z^L)).
 \]
\end{enumerate}
\end{lemma}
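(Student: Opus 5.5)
The plan is to reduce both statements to Lemma~\ref{le:est}, using the elementary product identities
\[
(-q;q)_\infty=\frac{(q^2;q^2)_\infty}{(q;q)_\infty},\qquad
(-q;q^2)_\infty=\frac{(-q;q)_\infty}{(-q^2;q^2)_\infty}.
\]
The key observation is that $q^2=\zeta^2e^{-2z}$ is again of the form ``root of unity times $e^{-z'}$'' with $z'=2z$ in the right half-plane, and $z'\to0$ exactly when $z\to0$. The error factors $(1+o(z^L))$ multiply and divide to errors of the same form, since $o((2z)^L)=o(z^L)$.

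For (1), assume $m$ is odd. Then $\zeta^2=e(2a/m)$ still has order $m$, because $\gcd(2a,m)=1$. Apply Lemma~\ref{le:est}(2) twice:
\begin{itemize}
\item to $(q^2;q^2)_\infty$, with root of unity $\zeta^2$, numerator $2a$ and variable $2z$, which gives $\exp\bigl(-\frac{\pi^2}{12m^2z}+\frac{z}{12}\bigr)\sqrt{\frac{\pi}{mz}}\,e\bigl(\frac{s(-2a,m)}{2}\bigr)$;
\item to $(q;q)_\infty$, which gives $\exp\bigl(-\frac{\pi^2}{6m^2z}+\frac{z}{24}\bigr)\sqrt{\frac{2\pi}{mz}}\,e\bigl(\frac{s(-a,m)}{2}\bigr)$.
\end{itemize}
Dividing the first by the second, the exponents combine to $\frac{\pi^2}{12m^2z}+\frac{z}{24}$. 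The square roots give $1/\sqrt2$; both use the principal branch on the right half-plane, so their quotient is the positive constant. The Dedekind-sum phases combine to $R(\zeta)$, which is the claim.

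For (2), assume $m\equiv 2\pmod 4$. Then $m$ is even, so Lemma~\ref{le:est}(3) gives
\[
(-q;q)_\infty=\exp\Bigl(-\tfrac{\pi^2}{6m^2z}+\tfrac{z}{24}\Bigr)Q(\zeta)(1+o(z^L)).
\]
Next write $\zeta^2=e\bigl(a/(m/2)\bigr)$. Here $m/2$ is odd, and $a$ is odd because $\gcd(a,m)=1$ with $m$ even, so $\gcd(a,m/2)=1$. Hence part (1), already proved, applies to $(-q^2;q^2)_\infty$ with order $m/2$ and variable $2z$:
\[
(-q^2;q^2)_\infty=\exp\Bigl(\tfrac{\pi^2}{12(m/2)^2\cdot 2z}+\tfrac{2z}{24}\Bigr)\tfrac{1}{\sqrt2}R(\zeta^2)(1+o(z^L))
=\exp\Bigl(\tfrac{\pi^2}{6m^2z}+\tfrac{z}{12}\Bigr)\tfrac{1}{\sqrt2}R(\zeta^2)(1+o(z^L)).
\]
Taking the quotient gives exponent $-\frac{\pi^2}{3m^2z}-\frac{z}{24}$ and constant $\sqrt2\,Q(\zeta)/R(\zeta^2)$, as stated.

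The only point needing care is the meaning of $R(\zeta^2)$. It must be read with respect to the representation $\zeta^2=e(a/(m/2))$, that is, as $e\bigl((s(-2a,m/2)-s(-a,m/2))/2\bigr)$. This is well defined because Dedekind sums $s(x,y)$ depend only on $x\bmod y$. Beyond this and the coprimality checks above, which are the main things to verify, the argument is routine bookkeeping.
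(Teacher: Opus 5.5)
Your proposal is correct and follows the paper's own proof exactly. Part (1) comes from Lemma~\ref{le:est}(2), applied to both $(q;q)_\infty$ and $(q^2;q^2)_\infty$ through $(-q;q)_\infty=(q^2;q^2)_\infty/(q;q)_\infty$; part (2) comes from part (1), Lemma~\ref{le:est}(3), and $(-q;q^2)_\infty=(-q;q)_\infty/(-q^2;q^2)_\infty$. Your extra bookkeeping checks out and fills in details the paper leaves implicit: the exponents, the factor $1/\sqrt2$ from the principal square roots, the coprimality of $2a$ with $m$ and of $a$ with $m/2$, and reading $R(\zeta^2)$ via $\zeta^2=e(a/(m/2))$.
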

 
\begin{proof}
The statement in (1) follows from Lemma~\ref{le:est}(2) and an application of the identity 
\[
(-q;q)_\infty=\frac{(q^2;q^2)_\infty}{(q;q)_\infty}.
\]
The claim in (2) follows from (1), Lemma~\ref{le:est}(3), and the identity
\[
(-q;q^2)_\infty=\frac{(-q;q)_\infty}{(-q^2;q^2)_\infty}. \qedhere
\]
\end{proof}

Next, we need a result on the asymptotics of the $q$-Pochhammer symbol.
For non-negative integers $k$, the Bernoulli numbers $B_k$ are a sequence of rational numbers defined by the Taylor series expansion
\[
\frac{x}{e^x -1} = \sum_{k\geq 0} B_k \frac{x^k}{k!}.
\]
The \textit{Bernoulli polynomials} $\boldsymbol{B}_k(X)\in\Q[X]$ have the generating function
\[
\frac{te^{Xt}}{e^t -1} = \sum_{k\geq 0} \boldsymbol{B}_k(X) \frac{t^k}{k!}.
\]
They also admit the explicit expression
\bea\label{eq:bernpol}
\boldsymbol{B}_n(X) = \sum_{k=0}^n\binom{n}{k}B_kX^{n-k}.
\eea
In particular, the $n$-th \textit{Bernoulli number} is $B_{n} = \boldsymbol{B}_n(0)$.

The following lemma is a generalization of a result in \cite[Lemma~2.2]{folsom2023oscillating},
adapted to our setting.
Asymptotic expansions of this form are standard in the analysis of $q$-series and arise for instance, in the study of Nahm sums; see for example \cite{zagier2021asymp}.

\begin{lemma}
\label{le:pohest}
Let $w=e^{-iv}\in\CC$, and assume that $\Re((v+2\pi n)/\varphi)\neq 0$ for all $n\in\ZZ$ if $\Im(v)>0$.
Let $\zeta\in\CC$ be a root of unity of order $m$. Let $j,\,k\in\NN$ with $k\mid m$.
Then, as $z\to 0$ in the right half-plane, i.e., as $q=\zeta e^{-z/m}\to\zeta$, the following equality is true
\begin{small}
\[
(wq^j;q^k)_\infty=\exp\Biggl(-\frac{k}{mz}\Li_2^\varphi(({w}{\zeta^{j}})^{m/k}) - \biggl(\frac{k}{2} + \frac{k-j}{m}\biggr)\Li_1^\varphi(({w}{\zeta^{j}})^{m/k})+ \sum_{t=1}^m\frac{t}{m}\Li_1^\varphi(\zeta^{kt-k+j}w) + \psi_{w,\zeta}(z)\Biggr)
\]
\end{small}
where $\psi_{w, \zeta}(z)\in\CC\llbracket z \rrbracket$ has the following asymptotic expansion as $z\to 0$ for all $N\in\NN$
\[
\psi_{w, \zeta}(z)=-\sum_{s=2}^N\sum_{t=1}^m k^{s-1} \boldsymbol{B}_s\br{1-\frac{t-1+j/k}{m}}\Li_{2-s}^\varphi(\zeta^{kt-k+j}w)\frac{z^{s-1}}{s!} + O(z^N).
\]
\end{lemma}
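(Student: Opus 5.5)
The idea is to take logarithms and reduce the infinite product to finitely many sums over arithmetic progressions, then apply Euler--Maclaurin to each. Write $q=\zeta e^{-z/m}$ and split the index $\ell\geq 0$ of $(wq^j;q^k)_\infty=\prod_{\ell\ge0}(1-wq^{j+k\ell})$ by its residue modulo $m/k$. Since $k\mid m$, we have $\zeta^{k\cdot m/k}=1$, so the root-of-unity part of $q^{j+k\ell}$ depends only on $\ell \bmod m/k$. Writing $\ell=(t-1)+\tfrac{m}{k}r$ with $r\ge0$, the factor is $1-\zeta^{kt-k+j}w\,e^{-(j+k(t-1))z/m-kr z}$. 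Hence
\[
\log(wq^j;q^k)_\infty=-\sum_{t}\sum_{r\ge0}\Li_1^\varphi\Bigl(\zeta^{kt-k+j}w\,e^{-kz\left(r+\frac{t-1+j/k}{m}\right)}\Bigr).
\]
The branch $\widetilde{\log}$ is used here. The hypothesis $\Re((v+2\pi n)/\varphi)\neq0$ guarantees that the path $x\mapsto -i\varphi x+v$ avoids the cuts $\mathcal D$ (Figure~\ref{fig:li}), provided $z$ lies on rays near $\varphi$. One first proves the statement for $z\in\varphi\RR_{>0}$ and then extends it by analytic continuation, or by noting that both sides are analytic in a sector.

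Next, apply the standard Euler--Maclaurin asymptotic (as in \cite[Lemma~2.2]{folsom2023oscillating} and Zagier's Nahm-sum asymptotics) to each inner sum. For $f(x)=\Li_1^\varphi(ce^{-x})$ and $0<a\le1$,
\[
\sum_{r\ge0}f((r+a)\varepsilon)\sim\frac1\varepsilon\int_0^\infty f(x)\,dx-\sum_{s\ge1}\frac{\boldsymbol B_s(a)}{s!}f^{(s-1)}(0)\,\varepsilon^{s-1}
\]
as $\varepsilon\to0$ in a sector. Take $\varepsilon=kz$, $c=\zeta^{kt-k+j}w$ and $a=\frac{t-1+j/k}{m}$. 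Use $\int_0^\infty\Li_1^\varphi(ce^{-x})\,dx=\Li_2^\varphi(c)$, which is the rotated-path definition \eqref{eq:dilogdef}, and $f^{(s-1)}(0)=(-1)^{s-1}\Li_{2-s}^\varphi(c)$. Then rewrite $(-1)^s\boldsymbol B_s(a)=\boldsymbol B_s(1-a)$. The $s\ge2$ terms give exactly $\psi_{w,\zeta}(z)$ with the factor $k^{s-1}$ coming from $\varepsilon=kz$. The $s=1$ term gives $\sum_t(\tfrac12-a)\Li_1^\varphi(c_t)$ up to sign conventions. Writing $\tfrac12-a=\tfrac12+\tfrac{1}{m}-\tfrac{t}{m}-\tfrac{j}{km}$ produces the $\frac tm\Li_1^\varphi(\zeta^{kt-k+j}w)$ sum plus a constant multiple of $\sum_t\Li_1^\varphi(c_t)$.

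Finally, the $t$-sums are collapsed with the distribution relations
\[
\sum_{u^{N}=1}\Li_s(cu)=N^{1-s}\Li_s(c^N),\qquad N=m/k .
\]
The $c_t=\zeta^{j}w\,(\zeta^{k})^{t-1}$ run over $\zeta^jw$ times the $N$-th roots of unity, up to multiplicity coming from the range of $t$. This turns $\frac1{kz}\sum_t\Li_2^\varphi(c_t)$ into $\frac{k}{mz}\Li_2^\varphi((w\zeta^j)^{m/k})$. It also turns the constant-coefficient $\Li_1$ terms into a multiple of $\Li_1^\varphi((w\zeta^j)^{m/k})$, which one then matches with $-(\frac k2+\frac{k-j}{m})$.

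The main obstacle is bookkeeping, not analysis. One must check that the distribution relations hold for the rotated branches $\Li_s^\varphi$, by comparing derivatives and the limits at $w\to0$ inside the cut-free region. One must also track the range of $t$ (over $1,\dots,m$ versus $1,\dots,m/k$, with the attendant multiplicity $k$) so that the constants $\frac k2+\frac{k-j}{m}$ and the normalization $k^{s-1}$ come out exactly as stated. I would verify these constants first against the case $k=1$, $j=1$, $w=1$, which must reproduce Lemma~\ref{le:est}(2).
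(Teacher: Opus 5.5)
Your route is the paper's: take $\widetilde{\log}$, split the product into arithmetic progressions on which the root-of-unity factor is constant, and apply Euler--Maclaurin along the ray $z\in\varphi\RR_{>0}$, with the main term given by the rotated integral \eqref{eq:dilogdef}. You then use $(-1)^s\boldsymbol{B}_s(a)=\boldsymbol{B}_s(1-a)$ and collapse the $t$-sums with the distribution relations. Applying the shifted Euler--Maclaurin formula with $\boldsymbol{B}_s(a)$ directly is a mild streamlining. The paper instead applies the unshifted formula to $\sum_{l\ge1}$, Taylor-expands the shift $e^{z(kt+k-j)/m}$, and recombines via \eqref{eq:bernpol}.

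There is, however, an indexing error you need to fix. With $\ell=(t-1)+\frac{m}{k}r$ you get $q^{k\ell}=q^{k(t-1)}q^{mr}$ and $q^{mr}=e^{-rz}$. So the step is $z$, not $kz$, and $t$ runs only over $1,\dots,m/k$. Several of your steps are correct only for the splitting modulo $m$, namely $\ell=(t-1)+mr$ with $t=1,\dots,m$:
\begin{itemize}
\item the displayed sum with $e^{-kz(r+\frac{t-1+j/k}{m})}$;
\item the choice $\varepsilon=kz$, $a=\frac{t-1+j/k}{m}$;
\item the multiplicity-$k$ step producing $\frac{k}{mz}\Li_2^\varphi((w\zeta^j)^{m/k})$.
\end{itemize}
That splitting is exactly the paper's $n=lm-t$ after $t\mapsto m-t$. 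Taken literally with $t\le m/k$ and step $kz$, your computation would give $-\frac{1}{mz}\Li_2^\varphi$ instead of $-\frac{k}{mz}\Li_2^\varphi$.

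Splitting modulo $m/k$ with the correct step $z$ also works, but it lands on $\boldsymbol{B}_s\bigl(1-\frac{k(t-1)+j}{m}\bigr)$ summed over $t\le m/k$, with no factor $k^{s-1}$. You would then need the multiplication theorem $\boldsymbol{B}_s(kx)=k^{s-1}\sum_{i=0}^{k-1}\boldsymbol{B}_s(x+\frac{i}{k})$ to reach the stated $\psi_{w,\zeta}$.

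Two smaller points.
\begin{itemize}
\item The sign you left open is forced. The $s=1$ term is $\sum_t(a_t-\frac12)\Li_1^\varphi(c_t)$, not $\frac12-a_t$, because of the overall minus in $\widetilde{\log}=-\sum\Li_1^\varphi$. Only this sign yields $+\sum_{t}\frac{t}{m}\Li_1^\varphi(\zeta^{kt-k+j}w)$ and $-(\frac k2+\frac{k-j}{m})\Li_1^\varphi$.
\item Your proposed check $k=j=1$, $w=1$ is not a special case of the lemma. The $t=m$ argument is $\zeta^m w=1$, which makes $\Li_1^\varphi$ and every $\Li_{2-s}^\varphi$ singular. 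This is why Lemma~\ref{le:est}(2) carries the factor $\sqrt{2\pi/(mz)}$, which no expression of the lemma's shape can produce.
\end{itemize}
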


\begin{proof}
Throughout the proof, set $z=\varphi h$, where $h\in\RR_{>0}$ and $\varphi\in\CC$ with $\abs{\varphi}=1$ and $\abs{\arg(\varphi)}<\frac{\pi}{2}$.
The branch cuts of $\Li^\varphi$ are on the upper half-plane, so we require $\Re((v+2\pi n)/\varphi)\neq 0$ when $\Im(v)>0$.
By definition of the $q$-Pochhammer symbol 
\[
\left(wq^j;q^k\right)_\infty = \prod_{n=1}^\infty \left( 1- wq^j q^{(n-1)k}\right).
\]
Taking the $\log$ on both sides (and choosing the branch of the logarithm explained in Section~\ref{sec:polylogs}), we obtain
\[
\widetilde{\log}\br{wq^j;q^k}_\infty=\sum_{n\geq 1}\widetilde{\log}\br{1-wq^{k(n-1)+j}}=\sum_{t=0}^{m-1}\sum_{l\geq 1}\widetilde{\log}\br{1-\zeta^{-kt-k+j}we^{-\varphi h\br{k(l m-t-1)+j}/m}}.
\]
Under our assumptions, we have $\zeta^{-kt-k+j}we^{-\varphi\br{kx-h\br{kt+k-j}/m}}\neq 1$.
As a result, the function
\[
x\mapsto \widetilde{\log}\br{1-\zeta^{-kt-k+j}we^{-\varphi(kx-h(kt+k-j)/m)}},
\]
and all of its derivatives
\begin{equation}
\label{eq:deriv}
 \frac{d^n}{dx^n}\widetilde{\log}\br{1-\zeta^{-kt
 -k+j}we^{-\varphi(kx-h(kt+k-j)/m)}}=-\br{-k}^n\varphi^n\Li_{1-n}^\varphi\br{\zeta^{-kt-k+j}we^{-\varphi(kx-h(kt+k-j)/m)}}
\end{equation}
are well-defined for all $w=e^{-iv}$.
As $x\to\infty$, the function and its derivatives decay rapidly since $\Li^{\varphi}_{1-N}(z)\in(1-z)^{-N}\CC[z]$.
Applying the Euler--Maclaurin summation formula \cite[p.~13]{zagier2006mellin},
we obtain
\begin{multline}
\label{eq:pohasym}
 \widetilde{\log}\br{wq^j;q^k}_\infty=\sum_{t=0}^{m-1}\frac{1}{h}\int_0^\infty\widetilde{\log}\br{1-\zeta^{-kt-k+j}we^{-\varphi\br{kx-h\br{kt+k-j}/m}}} dx \\
 +\sum_{t=0}^{m-1}\sum_{n=0}^N\frac{(-1)^nB_{n+1}}{(n+1)!}\frac{d^n}{dx^n}\bigg\rvert_{x=0}\widetilde{\log}\br{1-\zeta^{-kt-k+j}we^{-\varphi\br{kx-h\br{kt+k-j}/m}}} h^n +O(h^N)
\end{multline}
for all $N\in\NN$ where the error term only depends on $w, j,k, \varphi$, and $\zeta$, and
we have used
\[
h^N\varphi^{N+1}\int_0^\infty \Li^{\varphi}_{-N}\br{\zeta^{-kt-k+j}we^{-\varphi(kx-h(kt+k-j)/m)}}\frac{\boldsymbol{B}_{N+1}(x/h)}{(N+1)!}dx=O(h^N).
\]

\textit{Simplifying the first summand:}
We calculate the summand appearing in the first term of \eqref{eq:pohasym} and express the integral as a sum.
We have 
\begin{align*}
 \frac{1}{h}\int_0^\infty\widetilde{\log}(1-\zeta^{-kt-k+j}we^{-\varphi(kx-h(kt+k-j)/m)}) dx &= -\frac{1}{kz}\Li_2^\varphi(e^{z(kt+k-j)/m}\zeta^{-kt-k+j}w)\\
 &= -\frac{1}{kz}\sum_{l\geq 0}\frac{\Li_{2-l}^\varphi(\zeta^{-kt-k+j}w)}{l!}\br{\frac{z\br{kt+k-j}}{m}}^l.
\end{align*}

\textit{Rewriting the second summand:}
Evaluating the derivatives in \eqref{eq:deriv} at $x=0$ yields 
\begin{align*}
 &\frac{d^n}{dx^n}\bigg\rvert_{x=0}\widetilde{\log}(1-\zeta^{-kt-k+j}we^{-\varphi\br{kx-h\br{kt+k-j}/m}})\\ &= -\br{-k}^n\varphi^n\Li_{1-n}^\varphi(\zeta^{-kt-k+j}we^{z\br{kt+k-j}/m})\\
 &= -\br{-k}^n\varphi^n\sum_{l\geq 0}\frac{\Li_{1-n-l}^\varphi(\zeta^{-kt-k+j}w)}{l!}\br{\frac{z\br{kt+k-j}}{m}}^l.
\end{align*}
Hence, \eqref{eq:pohasym} can be expressed as
\begin{align*}
 \widetilde\log\br{wq^j;q^k}_\infty = &-\sum_{t=0}^{m-1}\frac{1}{kz}\sum_{l\geq 0}\frac{\Li_{2-l}^\varphi(\zeta^{-kt-k+j}w)}{l!}\br{\frac{z\br{kt+k-j}}{m}}^l\\
 & -\sum_{t=0}^{m-1}\sum_{n=0}^N\frac{k^nB_{n+1}}{(n+1)!}\sum_{l\geq 0}\frac{\Li_{1-n-l}^\varphi(\zeta^{-kt-k+j}w)}{l!}\br{\frac{kt+k-j}{m}}^l z^{l+n}+O(h^N).
\end{align*}
Shifting $n\mapsto n-1$ and summing over $s=n+l=1, \dots, N$, it follows that $\widetilde\log\br{wq^j;q^k}_\infty$ equals
\begin{align*}
 &-\frac{1}{kz}\sum_{t=0}^{m-1}\Li_2^\varphi(\zeta^{-kt-k+j}w)-\sum_{t=0}^{m-1}\sum_{s=1}^N\sum_{n=0}^sk^{n-1}\binom{s}{n}B_n\br{\frac{kt+k-j}{m}}^{s-n}\Li_{2-s}^\varphi(\zeta^{-kt-k+j}w)\frac{z^{s-1}}{s!}\\
 &\quad+O(h^N)\\
 =& -\frac{1}{kz}\sum_{t=0}^{m-1}\Li_2^\varphi(\zeta^{-kt-k+j}w)-\sum_{t=0}^{m-1}\Li_1^\varphi(\zeta^{-kt-k+j}w)\br{\frac{t+1-j/k}{m}-\frac{1}{2}}
 \\
 &\quad-\sum_{t=0}^{m-1}\sum_{s\geq 2} k^{s-1} \boldsymbol{B}_s\br{\frac{t+1-j/k}{m}} \Li_{2-s}^\varphi(\zeta^{-kt-k+j}w)\frac{z^{s-1}}{s!} +O(h^N),
\end{align*}
where we collected all terms with $s\geq N$ in $O(h^N)$ and used~\eqref{eq:bernpol}.
Replacing $t$ by $m-t\in\{1,\dots,m\}$ yields
{\allowdisplaybreaks
\begin{align*}
 & -\frac{1}{kz}\sum_{t=1}^m\Li_2^\varphi(\zeta^{kt-k+j}w) + \sum_{t=1}^m\Li_1^\varphi(\zeta^{kt-k+j}w)\br{\frac{t-1+j/k}{m}-\frac{1}{2}}-\\
 &\quad\sum_{t=1}^{m}\sum_{s\geq 2}k^{s-1}\boldsymbol{B}_s\br{1-\frac{t-1+j/k}{m}} \Li_{2-s}^\varphi(\zeta^{kt-k+j}w)\frac{z^{s-1}}{s!} +O(h^N),
\end{align*}} 
which we simplify and rewrite as
 \[ -\frac{1}{kz}\sum_{t=1}^m\Li_2^\varphi(\zeta^{kt-k+j}w)-\frac{1}{2}\sum_{t=1}^{m}\Li_1^\varphi(\zeta^{kt-k+j}w)+\sum_{t=1}^m\frac{t-1+j/k}{m}\Li_1^\varphi(\zeta^{kt-k+j}w)+\psi_{w,\zeta}(z).
 \]
We now divide the first sum involving $\Li^{\varphi}_2(\cdot)$ into $k$ sums each of length $m/k$ to obtain
\begin{align*}
&-\frac{1}{kz}\br{\sum_{t=1}^{\frac{m}{k}}\Li_2^\varphi\br{(\zeta^k)^t\frac{w}{\zeta^{k-j}}}+\sum_{t=\frac{m}{k}+1}^{2\frac{m}{k}}\Li_2^\varphi\br{(\zeta^k)^t\frac{w}{\zeta^{k-j}}}+\cdots+\sum_{t=(k-1)\frac{m}{k}+1}^{k\frac{m}{k}}\Li_2^\varphi\br{(\zeta^k)^t\frac{w}{\zeta^{k-j}}}}\\
 &\quad -\frac{1}{2}\sum_{t=1}^m\Li_1^\varphi(\zeta^{kt-k+j}w) + \sum_{t=1}^m\br{\frac{t-1+j/k}{m}}\Li_1^\varphi(\zeta^{kt-k+j}w) + \psi_{w,\zeta}(z).
\end{align*}
The ``distribution property'' of the dilogarithm \cite[p.~9]{zagier2007dilogarithm} asserts that for all natural numbers $n$,
\[
\Li^\varphi_2(x) = n \sum_{z^n = x} \Li^\varphi_2(z).
\]
Using this, the first term of the above expression simplifies considerably.
Simultaneously, we rearrange like-terms to obtain
\[
-\frac{k}{mz}\Li_2^\varphi\br{\br{\frac{w}{\zeta^{k-j}}}^{m/k}} - \left( \frac{1}{2} + \frac{1-j/k}{m}\right)\sum_{t=1}^m\Li_1^\varphi(\zeta^{kt-k+j}w) + \sum_{t=1}^m \frac{t}{m}\Li_1^\varphi(\zeta^{kt-k+j}w) + \psi_{w,\zeta}(z).
\]
As before, we rewrite the sum involving $\Li_1^\varphi(\cdot)$ into $k$ sums each of length $m/k$.
Now, recall that (writing $\beta$ to denote a root of unity of order $n$) we have the identity 
\[
\sum_{t=1}^n\Li^{\varphi}_1(\zeta^t w)=\sum_{t=1}^n\widetilde{\log}(1-\beta^t w)=\widetilde{\log}\br{\prod_{t=1}^n\br{1-\beta^t w}}=\widetilde{\log}(1-w^n)=\Li^{\varphi}_1(w^n).
\]
This allows us to simplify the second term of the above expression which we can now rewrite as
{\allowdisplaybreaks
\[
-\frac{k}{mz}\Li_2^\varphi\br{\br{\frac{w}{\zeta^{k-j}}}^{m/k}} - \left(\frac{k}{2} + \frac{k-j}{m}\right)\Li_1^\varphi\br{\br{\frac{w}{\zeta^{k-j}}}^{m/k}} + \sum_{t=1}^m\frac{t}{m}\Li_1^\varphi(\zeta^{kt-k+j}w) + \psi_{w,\zeta}(z).
\]
}
This completes the proof.
\end{proof}

\section{Asymptotics of \texorpdfstring{$v_j(q)$}{}}
\label{sec:asympgenf}
We provide the asymptotics of the functions $v_2(q), v_3(q)$, and $v_{4}(q)$ as $q$ approaches a root of unity.
It is recorded in \cite[(19.7.1)]{RlostV} that

\bea
\label{eq:pointed by KB}
v_3(q) \= v_1(-q)-2v_4(q)+2.
\eea
In particular, the asymptotics for $v_3(q)$ follow directly from that of $v_1(q)$ and $v_4(q)$.
Moreover, the contribution from $v_1(q)$ is contained in an error term from $v_4(q)$.
We remind the reader that the asymptotics of $v_1(q)$ were calculated in \cite[Theorem~1.2]{folsom2023oscillating}.

In order to prove the asymptotics of $v_2(q)$ and $v_4(q)$, we consider the generalization
\begin{equation}
\label{def: qgeneral}
\VVV_{r,b}(q) \coloneq \sum_{n\geq 1}\frac{(-1)^{(b+1) n}q^{2n^2+bn}}{\br{-q; q^2}_{rn}}\eqqcolon \sum_{n\ge0}V_{r,b}(n)q^n ,
\end{equation}
where $b\in\ZZ$ and $r\in\ZZ_{> 0}$.
With this definition, we recover $v_2$ from $\VVV_{r,b}(q)$ when $b=-1, r=1$, and $v_4-1$ by substituting $b=0, r=2$.

The asymptotics of $\VVV_{r,b}(q)$ when $q$ approaches a root of unity $\zeta$ depends on the order $m$ of $\zeta$.
If $m\not\equiv 2 \mod 4$, then $\VVV_{r,b}(q)$ stays bounded and we have the following theorem.

\begin{theorem}
\label{thm:asympv2O1}
If $\zeta$ is a root of unity of order $m\not\equiv2\mod 4$, then as $z\to 0$, we have
$\VVV_{r,b}(\zeta e^{-z}) \to \VVV_{r,b}(\zeta)=O(1)$.
More precisely, we have
\[
\VVV_{r,b}(\zeta)\= 
 \frac 1 {1-4^{-r}}
 \sum_{s=0}^{m-1} (-1)^{(b+1)s}
 \frac{\zeta^{(2s^2+bs)}}{(-\zeta;\zeta^{2})_{rs}}
 \times 
 \begin{cases}
 {(1-(-1)^{b}2^{-r})}&\text{if $m\equiv\pm 1\mod 4$,}\\[8pt]
 1 &\text{if $m\equiv 0\mod 4$.}
 \end{cases}
\]
\end{theorem}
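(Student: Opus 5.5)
The plan is to exploit the periodicity of the summands at $q=\zeta$. The key point is that, when $m\not\equiv 2 \mod 4$, the denominator never vanishes and it picks up a fixed factor of modulus $>1$ over each period. The sum then becomes a geometric series, and a uniform-in-$q$ version of the same estimate justifies passing to the limit $z\to0$.

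\emph{Step 1: nonvanishing of the denominator.} Write $T_n(q)$ for the $n$-th summand of $\VVV_{r,b}(q)$. A factor $1+\zeta^{2i+1}$ vanishes only if $\zeta^{k}=-1$ for some odd $k$. With $\zeta=e(a/m)$ and $\gcd(a,m)=1$, this forces $m$ even and $k\equiv m/2 \mod m$ with $m/2$ odd, that is, $m\equiv 2\mod 4$. Hence under our hypothesis every $T_n(\zeta)$ is finite, and the same holds for $T_n(q)$ with $q$ in a small neighbourhood of $\zeta$.

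\emph{Step 2: the period ratio.} For $n=s+\ell m$ with $0\le s<m$, the numerator satisfies $q^{2(n+m)^2+b(n+m)}=q^{2n^2+bn}\,q^{4nm+2m^2+bm}$, and the sign changes by $(-1)^{(b+1)m}$. The denominator gains the block $\prod_{i=rn}^{rn+rm-1}(1+q^{2i+1})$. At $q=\zeta$ this block can be evaluated via $\prod_{\omega^N=1}(1+c\,\omega)=1-(-c)^N$.
\begin{itemize}
\item If $m$ is odd, then $\zeta^{2i+1}$ runs over all $m$-th roots of unity in each run of $m$ consecutive $i$. Each such run therefore gives $1-(-1)^m=2$, the block equals $2^r$, and the ratio is $\rho=(-1)^{b+1}2^{-r}$.
\item If $4\mid m$, then $\zeta^2$ has order $m/2$, and each run of $m/2$ consecutive factors gives $1-(-\zeta)^{m/2}=1-\zeta^{m/2}=2$. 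The block therefore equals $2^{2r}$, the sign is $+1$, and $\rho=4^{-r}$.
\end{itemize}
Summing the geometric series in $\ell$ gives $\sum_{s}T_s(\zeta)/(1-\rho)$. For $m$ odd, $1/(1+(-1)^b2^{-r})=(1-(-1)^b2^{-r})/(1-4^{-r})$, which matches the displayed constant. Here care is needed with the index range. The series starts at $n=1$ while the residues run over $0\le s\le m-1$, so one must track how the $n=0$ slot enters; this amounts to adding $T_0$ and absorbing it in the normalization of the statement.

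\emph{Step 3: uniform convergence (main obstacle).} It remains to show $\VVV_{r,b}(\zeta e^{-z})\to\VVV_{r,b}(\zeta)$ as $z\to0$ in the right half-plane, which is an Abel-type statement. For each residue $s$, the ratio $T_{n+m}(q)/T_n(q)$ is not literally independent of $\ell$ for $q\ne\zeta$. I would bound it by
\[
\Bigl|\frac{T_{n+m}(q)}{T_n(q)}\Bigr|\le \frac{|q|^{4nm+2m^2+bm}}{\prod_{i=rn}^{rn+rm-1}|1+q^{2i+1}|}.
\]
Since $|q|\le1$, the numerator is at most $|q|^{2m^2+bm}\le C$. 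For the denominator, write $q^{2i+1}=\zeta^{2i+1}e^{-(2i+1)z}$. The factors with $|(2i+1)z|$ small are close to their values at $\zeta$, while for large $i$ we have $|q^{2i+1}|$ small and the factors are close to $1$.

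This transition region is the delicate point. I would handle it by noting that $\log\prod_{\text{block}}|1+q^{2i+1}|$ is a Riemann-sum over a full period of roots of unity twisted by $e^{-(2i+1)z}$. Using the identity $\prod_{\omega}(1+c\,\omega)=1-(-c)^N$ blockwise, with $c=\zeta e^{-(2rn+1)z}$ up to an error controlled by $|mz|$, each block is $\ge |1+|c|^{N}|\cdot(1-O(m|z|))\ge 1-O(m|z|)$ in the worst case. This yields a uniform bound $|T_{n+m}/T_n|\le c<1$ for $|z|$ small along rays, possibly after grouping two periods together. With such a bound the Weierstrass $M$-test gives uniform convergence of $\sum_n T_n(\zeta e^{-z})$ near $z=0$. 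Since each $T_n$ is continuous at $\zeta$, the limit can be taken termwise, giving $\VVV_{r,b}(\zeta e^{-z})\to\VVV_{r,b}(\zeta)=O(1)$ with the value computed in Step 2.
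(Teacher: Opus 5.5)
Your Steps~1 and~2 follow the paper's proof: split $n$ into residue classes mod $m$, evaluate $(-\zeta;\zeta^2)_{rm}\in\{2^r,4^r\}$, and sum a geometric series. Both steps are correct. Step~3, the passage to the limit, fails as written.

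\textbf{The block bound is false.} Your lower bound $|1+|c|^N|\,(1-O(m|z|))$ for a block does not hold. The identity $\prod_{\omega^N=1}(1+c\omega)=1-(-c)^N$ only gives the lower bound $1-|c|^N$, and this is essentially attained whenever $(-c)^N$ is near the positive reals. Even granting your bound, you would only get $|T_{n+m}/T_n|\le C/(1-O(m|z|))$, not $\le c<1$. The decay has to come from balancing $|q|^{4nm}$ against the block, and that balance is exactly what breaks.

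\textbf{A concrete failure.} Take $m=1$, $r=1$, and $z$ on the ray $\arg z=\theta\in(0,\frac{\pi}{2})$. Put $Y=(2n+1)z$ and choose $n$ with $\Im Y\approx\pi$. Then
$|T_{n+1}/T_n|=|q|^{4n+2+b}/|1+q^{2n+1}|\approx u^2/(1-u)$ with $u=e^{-\pi\cot\theta}$, which exceeds $1$ once $u>(\sqrt5-1)/2$.

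\textbf{No majorant exists on steep rays.} Grouping periods does not help. Summing $\log|1+q^{2i+1}|$ as a Riemann sum shows that, for $\theta$ near $\frac{\pi}{2}$, the terms with $n|z|\approx\frac{2\pi}{3}$ have modulus $e^{c_\theta/|z|}$ with $c_\theta>0$, and $c_\theta\to D(e^{\pi i/3})/2$ as $\theta\to\frac{\pi}{2}$. So the supremum of $|T_n(e^{-z})|$ over $z$ on such a ray with $|z|<\delta$ grows exponentially in $n$. Hence no $M$-test or dominated-convergence argument can work along these rays.

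\textbf{What your argument does prove.} Correct the block lower bound and control the approximation of each block by $\prod_\omega(1+c\omega)$ uniformly in $n$. Then your argument gives the limit for $z\to0$ in a cone $|\arg z|\le\theta_0$ around the positive reals, in particular radially. Along steeper rays, any boundedness must come from cancellation among exponentially large terms, for instance via an analogue of Lemma~\ref{le:integrep} at $\zeta$ followed by a saddle-point analysis. Whether the limit statement even holds there would have to be settled that way. The paper's proof only invokes dominated convergence, so it has the same limitation.

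\textbf{The index shift cannot be absorbed.} In Step~2 the displayed right-hand side equals $\sum_{n\ge0}T_n(\zeta)=1+\VVV_{r,b}(\zeta)$. For example, with $m=1$ and $(r,b)=(1,-1)$ one has $\VVV_{1,-1}(1)=\sum_{n\ge1}2^{-n}=1$, while the formula gives $\frac43\cdot\frac32=2$. Your computation actually proves the corrected identity
$\VVV_{r,b}(\zeta)=\frac{1}{1-\rho}\sum_{s=1}^{m}T_s(\zeta)=\frac{1}{1-\rho}\sum_{s=0}^{m-1}T_s(\zeta)-1$.
The paper's proof makes the same slip when it substitutes $n\mapsto s+mn$ with $n\ge0$.
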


\begin{proof}
Similar to the proof of \cite[Lemma~3.1]{folsom2023oscillating}, we can bound the summands in $\VVV_{r,b}(\zeta e^{-z})$ and use the theorem of dominated convergence to obtain $\VVV_{r,b}(\zeta e^{-z}) \to \VVV_{r,b}(\zeta)$ as $z\to 0$.
We compute
\begin{align*}
 \VVV_{r,b}(\zeta)&=
 \sum_{n\geq1}
 \frac{(-1)^{(b+1)n}\zeta^{2n^2+bn}}
 {(-\zeta;\zeta^{2})_{rn}} &&(n\mapsto s+mn)\\
 &=\sum_{s=0}^{m-1}
 \sum_{n\geq0}
 (-1)^{(b+1)(mn+s)}
 \frac{\zeta^{2(s+mn)^2+b(s+mn)}}
 {(-\zeta;\zeta^{2})_{r(s+mn)}}\\
 &=\sum_{s=0}^{m-1} (-1)^{(b+1)s}
 \frac{\zeta^{2s^2+bs}}{(-\zeta;\zeta^{2})_{rs}}
 \sum_{n\geq 0} \frac{(-1)^{(b+1)mn}}{(-\zeta; \zeta^{2})_{rm}^n}.
 \end{align*}
 One can show that
 \[
 ({-\zeta; \zeta^{2}})_{rm}
 \= 
 \begin{cases}
 2^r &\text{if $m\equiv\pm 1\mod 4$}\\
 4^r &\text{if $m\equiv0\mod 4$},
 \end{cases}
 \]
 and thus, we have
 \begin{align*}
 \VVV_{r,b}(\zeta)
 &=
 \sum_{s=0}^{m-1} (-1)^{(b+1)s}
 \frac{\zeta^{2s^2+bs}}{(-\zeta;\zeta^{2})_{rs}}
 \times 
 \begin{cases}
 \dfrac 1 {1+(-1)^{b}2^{-r}}&\text{if $m\equiv\pm 1\mod 4$,}\\[8pt]
 \dfrac 1 {1-4^{-r}}&\text{if $m\equiv 0\mod 4$.}
 \end{cases}\\
 &=\frac 1 {1-4^{-r}}
 \sum_{s=0}^{m-1} (-1)^{(b+1)s}
 \frac{\zeta^{2s^2+bs}}{(-\zeta;\zeta^{2})_{rs}}
 \times 
 \begin{cases}
 {(1-(-1)^{b}2^{-r})}&\text{if $m\equiv\pm 1\mod 4$,}\\[8pt]
 1 &\text{if $m\equiv 0\mod 4$.}
 \end{cases}
\end{align*}
This completes the proof.
\end{proof}

On the other hand, if $m\equiv2 \mod 4$, then $\VVV_{r,b}(q)$ grows exponentially.

\begin{theorem}
\label{thm:asympv2}
Fix $r\in \{1,2\}$.
Let $\zeta$ be a root of unity of order $m$, and let $b\in \Z$.
If $m\equiv2\mod 4$, write $m=2d$ where $d$ is odd.
Then, as $z\to0$ on a ray in the right half-plane
\[
\VVV_{r,b}(\zeta e^{-z}) \= \left(e^{W_{2r}/(d^2z)}\frac{\gamma_{r,b}(\zeta)}{\sqrt{z}}
 +e^{\overline{W}_{2r}/(d^2z)}\frac{\overline{\gamma}_{r,b}(\zeta)}{\sqrt{z}}
 +\phi_{r,b}(\zeta)\right)
 (1+O(z)).
 \]
 Here, the constants $\gamma_{r,b}(\zeta)$, $\phi_{r,b}(\zeta)\in\C$ are defined in~\eqref{eq:defgammaphi} and 
 \bea\label{eq:Wdef}
 W_{2r} \coloneqq
 \dfrac{D( e^{\pi i/3})i}{2} -\begin{cases}
 \dfrac{\pi^2}{24}&\text{if}\ r=1,\\[8pt]
 0 & \text{if}\ r=2.
 \end{cases}
 \eea
\end{theorem}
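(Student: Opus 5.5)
The plan is to follow the strategy of \cite[Theorem~1.2]{folsom2023oscillating} for $v_1$: turn the sum over $n$ into a contour integral whose saddle points produce the exponentials $e^{W_{2r}/(d^2z)}$ and $e^{\overline W_{2r}/(d^2z)}$, while a residue (boundary) term produces the bounded constant $\phi_{r,b}(\zeta)$.

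\textbf{Step 1: reduce to a single sum over residues.} Write $q=\zeta e^{-z}$ with $m=2d$, and split $n=s+mN$ with $0\le s<m$, as in the proof of Theorem~\ref{thm:asympv2O1}. For $m\equiv 2\pmod 4$ one has $(-\zeta;\zeta^2)_{m}=0$, since $-\zeta\cdot\zeta^{2j}=1$ for some $j$. So the period product $(-q;q^2)_{rm}$ is of size $O(z)$ rather than a nonzero constant. This is exactly why the geometric series of Theorem~\ref{thm:asympv2O1} blows up and the sum over $N$ becomes genuinely large.

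\textbf{Step 2: integral representation.} For $(-q;q^2)_{rn}$ I write $(-q;q^2)_\infty/(-q^{2rn+1};q^2)_\infty$ and expand $1/(-q^{2rn+1};q^2)_\infty$ with Lemma~\ref{le:pohest}, taking $w=-q^{2rn}$, i.e.\ continuous variable $x=zn$. The prefactor $(-q;q^2)_\infty$ is handled by Lemma~\ref{le:est2}(2). The summand then has the shape
\[
\exp\Bigl(\tfrac{1}{z}f(x)+g(x)+O(z)\Bigr),
\qquad
f(x)=-2x^2+\tfrac{1}{m^2}\Bigl(\text{const}-\tfrac{m}{2}\Li_2^\varphi\bigl((-\zeta^{2rs+1}e^{-2rx})^{m/2}\bigr)\Bigr),
\]
up to $s$-dependent phases. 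Next I replace the sum over $N$ by an integral: via Euler--Maclaurin, or better via a Poisson/Abel--Plana type representation to account for the sign $(-1)^{(b+1)n}$. The rescaling by $d$ should appear after substituting $x\mapsto x/d$.

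\textbf{Step 3: saddle points and the residue term.} The saddle equation $f'(x)=0$ becomes $4x=\text{const}\cdot\Li_1(\cdot)$, i.e.\ an algebraic equation of the form $1-u=u^{2r}$ or $u^{2}=1+u$-type in $u=e^{-2rx}$ times a root of unity. I expect its roots to be $e^{\pm\pi i/3}$-related, giving critical values $W_{2r}/d^2$ and $\overline W_{2r}/d^2$ via the five-term/reflection relations: $D(e^{\pi i/3})$ appears as $\Im\Li_2(e^{\pi i/3})$, and $\Re\Li_2(e^{\pi i/3})=\pi^2/36$ combines with the prefactor $-\pi^2/(3m^2)$ from Lemma~\ref{le:est2}(2) to produce $-\pi^2/24$ when $r=1$ and $0$ when $r=2$. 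Gaussian integration at each saddle gives the $\sqrt{\pi z/|f''|}$ factor, i.e.\ the $z^{-1/2}$ and the constant $\gamma_{r,b}(\zeta)$, collecting $e^{g(x_0)}$, $Q(\zeta)/R(\zeta^2)$, and the sum over $s$. The two saddles are complex conjugate, which gives $\overline\gamma$. The term $\phi_{r,b}(\zeta)$ comes from the endpoint $x=0$ of the integral, equivalently the finitely many initial terms with boundary behaviour, computed as in Theorem~\ref{thm:asympv2O1}.

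\textbf{Main obstacle.} The hardest step is Step~3's contour deformation: justifying the shift through the saddles while respecting the rotated branch cuts $\mathcal D$. This is precisely why $\Li^\varphi$ was introduced, with $z=\varphi h$ on a fixed ray. It also requires showing that the remaining contour contributes only $O(z)$ relative error. I would first handle the real ray $\varphi=1$, verifying numerically for $d=1$, $r\in\{1,2\}$ that the saddle values match $W_{2r}$. I would then extend to general rays by analytic continuation in $\varphi$, which must be uniform because all errors in Lemma~\ref{le:pohest} depend only on $w,j,k,\varphi,\zeta$.
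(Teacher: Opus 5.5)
\textbf{The gap is in Steps~1--2, where you turn the sum into an integral.}

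Once you split $n=s+mN$ with $m=2d$, nothing oscillates in $N$ any more. By Lemma~\ref{le:zeta}, the phase on the class $n\equiv n_0$ modulo $d$ is a constant times $(-1)^{(n-n_0)/d}$. Both this sign and $\zeta^{2rn}$ are constant on each class modulo $2d$. At exponential order, Euler--Maclaurin therefore sees only $\frac1z\bigl(-2x^2-\frac{1}{2d^2}\Li_2(e^{-2rdx})+\frac{\pi^2}{12d^2}\bigr)$ with $x=zn$.

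The saddle of this function is the real point $e^{-2dx}=(\sqrt5-1)/2$. This is exactly your equation $1-u=u^2$ (and its $r=2$ analogue). The critical value there is \emph{positive}, about $0.33/d^2$ for $r=1$. So on the real ray each class sum has size $e^{c/z}$ with $c>0$, whereas the statement you are proving makes the whole series $O(z^{-1/2})$ there.

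The explanation is that the classes $n_0+2dN$ and $n_0+d+2dN$ carry the same smooth function of $n$ with opposite signs. The saddle contribution you compute therefore cancels identically, and the true main terms $e^{W_{2r}/(d^2z)}\gamma_{r,b}(\zeta)z^{-1/2}$ lie beyond all orders of your per-class expansions.

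\textbf{The missing idea is an exact representation that keeps the alternating factor inside the integrand.} The paper splits only modulo $d$ and writes each class sum as Watson's integral with kernel $1/\sin(\pi(s-n_0)/d)$ (Lemma~\ref{le:geninteg}). After the substitution $s=iv/z$, Lemma~\ref{le:est}(1) turns this kernel into $\mp 2i\,e^{\mp\pi(v/z+in_0)/d}$, with $\pm=\sign(\Re(v/\varphi))$. This produces the linear term $-\sign(\Re(v/\varphi))\pi v/d$ in $f_r$, and the switch of branch across $\Re(v/\varphi)=0$.

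In Poisson language, this amounts to keeping the frequencies $\pm\frac1{2d}$ of $(-1)^{(n-n_0)/d}$, each of which yields one saddle; the zero frequency you compute drops out. Your remark about Poisson or Abel--Plana points in this direction. However, it is incompatible with your Step~1, since after splitting modulo $m$ there is no sign left in the $N$-sum, and it never reaches your Step~3.

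\textbf{Consequences for Step~3.} This is also why Step~3 cannot give what you expect.

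\begin{itemize}
\item Saddle equation. Without the linear term, $f'=0$ has only the real roots above. With it, one gets $(1-e^{-2driv_0})^r=-e^{-4div_0}$, where the extra minus sign is the factor $e^{\pm\pi i}$ supplied by the kernel. Only then does one get $e^{-2driv_0}=e^{\pm\pi i/3}$ and the saddles $\pm\pi/(6rd)$ on the two sides of the cut.
\item Real part of $W_{2r}$. Your bookkeeping fails for the same reason. The term $1/(-q;q^2)_\infty$ enters inverted, so it contributes $+\frac{\pi^2}{12d^2}$. Together with $-\frac1{2d^2}\Re\Li_2(e^{\pi i/3})=-\frac{\pi^2}{72d^2}$ this gives $\frac{5\pi^2}{72d^2}$ for both values of $r$. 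The values $-\frac{\pi^2}{24d^2}$ and $0$ appear only after adding $2v_0^2+\pi v_0/d$ at $v_0=-\pi/(6rd)$, where the second summand is the kernel's linear term.
\item The constant $\phi_{r,b}(\zeta)$. It cannot be computed as in Theorem~\ref{thm:asympv2O1}. That formula is a geometric series in $1/(-\zeta;\zeta^2)_{rm}$, which is meaningless here precisely because, as you noted, $(-\zeta;\zeta^2)_m=0$. In the paper, $\phi_{r,b}(\zeta)$ is the sum \eqref{eq:phi} of the residues at the poles crossed when $-izL_\infty$ is moved to $\SSS$.
\item Passing to other rays. ``Analytic continuation in $\varphi$'' from the real ray is not a valid step. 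Which of the three terms dominates changes with $\arg z$. That is why the paper fixes $z=\varphi h$ and builds the branch $\Li^\varphi$ and the contour $\SSS=\mu\cup\mu'$ ray by ray.
\end{itemize}
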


Here and throughout, we write $\overline{\gamma}_{r,b}(\zeta)$ and $\overline{W}_{2r}$ for the complex conjugates of ${\gamma}_{r,b}(\zeta)$ and ${W}_{2r}$.
Note that for $r=1,2$, the term $W_{2r}$ agrees with the definitions in Theorem~\ref{thm:B}.

\begin{remark}
In particular, if $\zeta = -1$, then
 \bea\label{eq:asympvvvzeta1}
 \VVV_{r,b}(-e^{-z})\=
 \biggl(
 e^{W_{2r}/z}\frac{\gamma_{r,b}{(-1)}}{\sqrt{z}}
 +e^{\overline{W}_{2r}/z}\frac{\overline{\gamma}_{r,b}{(-1)}}{\sqrt{z}}
 +\phi_{r,b}(-1)\biggr)
 \bigl(1+O(z)\bigr),
 \eea
where
  \begin{equation}
 \label{defgammazeta1}
 \gamma_{r,b}(-1) =
 \sqrt{\frac{\pi}{2r\sqrt{3}}}\exp\left(\frac{2\*r-1-2\*b}{12} \pi i\right)
 \quad \text{and}
 \quad \phi_{r,b}(-1) = -1.
 \end{equation}
If $r=1$ and $b=-1$, \eqref{eq:asympvvvzeta1} recovers the asymptotics of $v_2(-e^{-z})$ in~\eqref{eq:asympv2zeta1}.
\end{remark}

Here, the leading term in the asymptotic depends on $\arg(z)$.
For example, if $\arg(z) = 0$, the constant term $\phi_{r, b}(\zeta)$ is leading.
If $z$ is close to the positive imaginary axis, e.g., $\arg(z)= 0.22 \pi$, the first term $e^{W_{2r}/(d^2z)}\frac{\gamma_{r,b}(\zeta)}{\sqrt{z}}$ is leading.
Similarly, if $z$ is close to the negative imaginary axis, then the second term $e^{\overline{W}_{2r}/(d^2z)}\frac{\overline{\gamma}_{r,b}(\zeta)}{\sqrt{z}}$ is leading.

\subsection{Proof of special case of Theorem~\ref{thm:asympv2}}

Before proving the general case we provide a proof when $\zeta=-1$.
We outline the key steps for the convenience of the reader.
\begin{itemize}
 \item[Step 1:] Write an integral representation for $\VVV_{r,b}(q)$.
 \item[Step 2:] Refine this representation to deduce an asymptotic formula involving analytic functions.
 \item[Step 3:] Use complex analysis to study these analytic functions precisely.
\end{itemize}

We start with Step 1.
Following Watson's contour integral \cite{GR,watson1910continuation}, we have an integral representation for $\VVV_{r,b}(q)$.
For this, we write $L_\infty$ for the contour $L_R$ in Figure~\ref{fig:con} as $R\to\infty$.

\begin{figure}[H]
\centering
 \begin{tikzpicture}[scale = 1]
 \draw [->, thick] (-4.1, 0) -- (4.1, 0) node [pos=1.07, left] {$\RR$};
 \draw [->, thick] (0, -3.75) -- (0, 3.25) node [pos=1, left] {$i\RR$};

 \node [circ] at (0, 0) {};
 \node [circ] at (-0.8, 0) {};
 \node [circ] at (-1.6, 0) {};
 \node [circ] at (-2.4, 0) {};
 \node [circ] at (-3.2, 0) {};

 \node [circ, red] at (0.8, 0) {};
 \node [circ, red] at (1.6, 0) {};
 \node [circ, red] at (2.4, 0) {};
 \node [circ, red] at (3.2, 0) {};

 \draw [blue, thick, -<-=0.5] (1.732, -3) arc(-60:30:3.464) node [pos=0.8, right ]{$C_R$};
 \draw [red, thick] (0.173, -0.3) arc(-60:30:0.346);
 \node [circ] at (0, 0) {};

 \draw [red, thick, ->-=0.5] (0.3, 0.173) -- (3, 1.732) node [pos=0.5, above] {$L_R$};
 \draw [red, thick, -<-=0.5] (0.173, -0.3) -- (1.732, -3);
 \draw [darkgray] (1.2, 0) arc(0:-60:1.2) node [pos=1, right] {$-\frac{\pi}{4}-\frac{\arg \varphi}{2}+\frac{\epsilon}{2}$};
 \draw [darkgray] (1, 0) arc (0:30:1) node [pos = 0.8, right] {$\frac{\pi}{4}-\frac{\arg \varphi}{2}-\frac{\epsilon}{2}$};

 \node [above] at (-1.6, 0) {poles at $\ZZ$};
 \end{tikzpicture}
 \caption{The contours $L_R$ and $C_R$.}
 \label{fig:con}
\end{figure}
\begin{lemma}
\label{le:integrep}
For $q=-e^{-z}$ with $\Re(z)>0$ and any integer $b$, 
\[
\VVV_{r,b}(q)=\frac{i}{2\br{-q;q^2}_\infty}\int_{L_\infty} e^{-\br{2s^2+bs}z}\br{-e^{-2rsz}q;q^2}_\infty \frac{1}{\sin(\pi s)} ds.
\]
\end{lemma}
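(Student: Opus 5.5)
The plan is to prove the representation by Watson's residue argument: close the contour $L_R$ with the arc $C_R$, evaluate the enclosed residues at the positive integers, and show that the arc contribution vanishes as $R\to\infty$. Write $z=\varphi h$ with $h>0$ and $\abs{\varphi}=1$, $\abs{\arg\varphi}<\frac{\pi}{2}$, and let the angles of the rays be as in Figure~\ref{fig:con}. Take $R=N+\frac12$ with $N\in\NN$, so that $C_R$ stays a bounded distance away from the poles of $1/\sin(\pi s)$.

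\emph{Orientation and residues.} The closed contour $L_R\cup C_R$ runs in along the lower ray, around the small arc near $0$, out along the upper ray, and back along $C_R$. It is therefore traversed clockwise, and it encloses exactly the poles $s=1,\dots,N$. Hence
\[
\int_{L_R}+\int_{C_R} \;=\; -2\pi i\sum_{n=1}^N \Res_{s=n}.
\]
Since $\Res_{s=n}\frac{1}{\sin(\pi s)}=\frac{(-1)^n}{\pi}$, the residue of the integrand at $s=n$ is $\frac{(-1)^n}{\pi}e^{-(2n^2+bn)z}(-e^{-2rnz}q;q^2)_\infty$. With $q=-e^{-z}$ we have $q^{2rn}=e^{-2rnz}$ and $q^{2n^2+bn}=(-1)^{bn}e^{-(2n^2+bn)z}$. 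This gives $(-1)^n e^{-(2n^2+bn)z}=(-1)^{(b+1)n}q^{2n^2+bn}$, and
\[
\frac{(-q^{2rn+1};q^2)_\infty}{(-q;q^2)_\infty}=\frac{1}{(-q;q^2)_{rn}}.
\]
Multiplying by the prefactor, $\frac{i}{2}\cdot(-2\pi i)\cdot\frac{1}{\pi}=1$, so the residues reproduce exactly the $n$-th summand of $\VVV_{r,b}(q)$.

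\emph{Decay of the arc and of the rays.} On the arc and on both rays, $\arg s$ lies in $\bigl(-\frac{\pi}{4}-\frac{\arg\varphi}{2}+\frac{\epsilon}{2},\,\frac{\pi}{4}-\frac{\arg\varphi}{2}-\frac{\epsilon}{2}\bigr)$. Two consequences follow.
\begin{itemize}
\item[(a)] $\arg(s^2z)=2\arg s+\arg\varphi\in(-\frac{\pi}{2}+\epsilon,\frac{\pi}{2}-\epsilon)$, so $\Re(s^2z)\geq c\abs{s}^2h$ with $c>0$. The Gaussian factor therefore satisfies $\abs{e^{-2s^2z}}\le e^{-2c h\abs{s}^2}$, and this dominates the linear factor $\abs{e^{-bsz}}\le e^{\abs{b}h\abs{s}}$.
\item[(b)] $\arg(sz)=\arg s+\arg\varphi$ lies in $\bigl(-\frac{\pi}{4}+\frac{\arg\varphi}{2},\,\frac{\pi}{4}+\frac{\arg\varphi}{2}\bigr)\subset(-\frac{\pi}{2},\frac{\pi}{2})$, so $\Re(sz)\ge0$ and $\abs{e^{-2rsz}}\le1$. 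Hence $\abs{(-e^{-2rsz}q;q^2)_\infty}\le(-\abs{q};\abs{q}^2)_\infty$, which is a bounded constant.
\end{itemize}
Finally, $\abs{1/\sin(\pi s)}$ is bounded on $C_R$ for $R=N+\frac12$, uniformly in $N$, and bounded on the rays away from $0$. So the integral over $C_R$ is $O(R\,e^{-c'hR^2})\to0$, and the integral over $L_R$ converges absolutely as $R\to\infty$. Letting $N\to\infty$ gives the claimed identity; the infinite sum converges absolutely since $\Re(z)>0$.

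The main point needing care is the choice of opening angles in (a) and (b). They must simultaneously make the Gaussian decay and keep $e^{-2rsz}$ bounded, so that the Pochhammer factor stays controlled. The interval $\bigl(-\frac{\pi}{4}-\frac{\arg\varphi}{2},\,\frac{\pi}{4}-\frac{\arg\varphi}{2}\bigr)$ in Figure~\ref{fig:con} is exactly what achieves both.
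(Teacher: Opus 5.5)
Your proposal is correct and follows the same route as the paper. You apply Cauchy's theorem on the clockwise contour $L_R\cup C_R$, and the residues $\frac{(-1)^n}{\pi}$ at $s=n\geq 1$ reproduce the summands of $\VVV_{r,b}(q)$. You then show the arc integral vanishes, and in doing so you supply the decay estimates (from $\Re(s^2z)\geq c\abs{s}^2h$ and $\Re(sz)\geq 0$) that the paper only cites from \cite[Lemma~3.3]{folsom2023oscillating}. The only implicit requirement is $\epsilon<\frac{\pi}{2}-\abs{\arg\varphi}$, so that the two rays lie on opposite sides of the positive real axis and the sector really contains the poles $1,\dots,N$.
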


\begin{proof}
At $s=n\in\Z$, the function $\frac{1}{\sin(\pi s)}$ has a simple pole with residue $\frac{(-1)^n}{\pi}$.
By applying Cauchy's theorem to the contour illustrated in Figure~\ref{fig:con} for some small $\epsilon>0$, we derive 
\begin{align*}
 &\frac{i}{2\br{-q;q^2}_\infty}\lim_{R\to\infty} \int_{L_R+C_R} e^{-\br{2s^2+bs}z}\br{-e^{-2rsz}q;q^2}_\infty \frac{1}{\sin(\pi s)} ds\\
 = &\frac{\pi}{\br{-q;q^2}_\infty}\sum_{n\geq 1} \underset{s=n}{\Res}\ \br{e^{-\br{2s^2+bs}z}\br{-e^{-2rsz}q;q^2}_\infty \frac{1}{\sin(\pi s)}}\\
 = &\frac{1}{\br{-q;q^2}_\infty}\sum_{n\geq1} (-1)^{n}e^{-(2n^2+bn)z}\br{-q^{2rn+1};q^2}_\infty
 = \VVV_{r,b}(q).
\end{align*}
To complete the proof, it remains to show that the integral over $L_\infty$ converges and that the integral over $C_R$ vanishes as $R\to\infty$.
For details of the proof of these steps, we refer the reader to \cite[Lemma~3.3]{folsom2023oscillating}.
\end{proof}

\subsubsection{A first asymptotic relation}
We continue with the proof of the special case of Theorem~\ref{thm:asympv2}.
Substituting $v=-izs$ in the integral representation of $\VVV_{r,b}(-e^{-z})$ obtained in Lemma~\ref{le:integrep}, we have
\begin{align}
\label{eq:integ2}
 \VVV_{r,b}(q)
 &= \frac{-1}{2z\br{-q;q^2}_\infty}\int_{-izL_\infty} e^{\frac{2v^2}{z}-biv}\br{-e^{-2riv}q;q^2}_\infty \frac{1}{\sin(\pi i v/z)} dv.
\end{align}

\begin{figure}[H]
\centering
\begin{minipage}{0.45\textwidth}
\centering
 \begin{tikzpicture}[scale=1]
 \draw [->, thick] (-3.6, 0) -- (3.6, 0) node [pos=1, anchor=south,yshift=.5mm] {$\RR$};
 \draw [->, thick] (0, -3.6) -- (0, 3.6) node [pos=1, left,xshift=-.5mm] {$i\RR$};

 \node [circ] at (0, 0) {};
 \node [circ, red] at (-0.4, 0.68) {};
 \node [circ, red] at (-0.8, 1.36) {};
 \node [circ, red] at (-1.2, 1.96) {};
 \node [circ, red] at (-1.6, 2.72) {};

 \node [circ, red] at (0.4, -0.68) {};
 \node [circ, red] at (0.8, -1.36) {};
 \node [circ, red] at (1.2, -1.96) {};
 \node [circ, red] at (1.6, -2.72) {};
 
 \draw [blue, thick] (0.3, -0.1732) arc(-30:-120:0.346);
 \node [circ] at (0, 0) {};

 \draw [blue, thick, -<-=0.5] (-0.173, -0.3) -- (-2.08, -3.6);
 \draw [blue, thick, ->-=0.5] (0.3, -0.1732) -- (3.6, -2.0598) node [pos=0.3, right] {$-iz L_\infty$};;
 
 \node [above] at (-2.7, 2) {poles at $iz\ZZ$};
 \end{tikzpicture}
 \end{minipage}\hspace{.05\textwidth}
 \begin{minipage}{.45\textwidth}
 \centering
 \begin{tikzpicture}[scale=1]
 \draw [->, thick] (-3.6, 0) -- (3.6, 0) node [pos=1, anchor=south,yshift=.5mm] {$\RR$};
 \draw [->, thick] (0, -3.6) -- (0, 3.6) node [pos=1, left,xshift=-.5mm] {$i\RR$};

 \node [circ] at (0, 0) {};
 \node [circ, red] at (-0.4, 0.68) {};
 \node [circ, red] at (-0.8, 1.36) {};
 \node [circ, red] at (-1.2, 1.96) {};
 \node [circ, red] at (-1.6, 2.72) {};

 \node [circ, red] at (0.4, -0.68) {};
 \node [circ, red] at (0.8, -1.36) {};
 \node [circ, red] at (1.2, -1.96) {};
 \node [circ, red] at (1.6, -2.72) {};
 
 \node [circ] at (0, 0) {};
 \node [circ] at (-0.45, 0) {};

 \draw[blue, thick, rotate=-150, -<-=0.4] ((0,-1.2) -- (1.6,2.77);
 \draw[blue, thick, rotate=-150, ->-=0.5] ((0,-1.2) -- (-1.6,2.77) node [right] {$\SSS$};

 \node [above] at (-2.7, 2) {poles at $iz\ZZ$};
 \node [above] at (-0.9, 0) {$-\frac{\pi}{6r}$};
 \end{tikzpicture}
 \end{minipage}
 \caption{The contours $-iz L_\infty$ and $\SSS$.}
 \label{figcontourSSS}
\end{figure}

In the integral representation \eqref{eq:integ2} of $\VVV_{r,b}(-e^{-z})$, we modify the contour $-izL_\infty$ to a contour $\SSS$, going through the points $-\pi/(6r)$ and $\pi/(6r)$, cf. Figure~\ref{figcontourSSS}.
Since we have to move the contour through some poles at $iz\Z_{<0}$, the integral in~\eqref{eq:integ2} can be expressed as
\begin{multline*}
 \frac{-1}{2z\br{-q;q^2}_\infty}\int_{\SSS} e^{\frac{2v^2}{z}-biv}\br{-e^{-2riv}q;q^2}_\infty \frac{1}{\sin(\pi i v/z)} dv\\
 -\frac{\pi i}{z\br{-q;q^2}_\infty}\sum_{\substack{n\leq 0\\ \abs{zn}<d_0}} \underset{v=-izn}{\Res}\br{e^{\frac{2v^2}{z}-biv}\br{-e^{-2riv}q;q^2}_\infty \frac{1}{\sin(\pi i v/z)} }
\end{multline*}
for some $d_0>0$, representing the distance from $0$ to the point where the contour intersects the line of poles.
The residue of $\frac{1}{\sin(\pi i v/z)}$ at $v=-izn$ is given by $(-1)^{n+1}iz/\pi$ and hence
\begin{equation*}
 \underset{v=-izn}{\Res}\br{e^{\frac{2v^2}{z}-biv}\br{-e^{-2riv}q;q^2}_\infty \frac{1}{\sin(\pi i v/z)} }=-\frac{(-1)^niz}{\pi} e^{(-2n^2-bn)z}\br{-e^{-2rzn}q;q^2}_\infty.
\end{equation*}
As $z\to 0$, the residues can be consolidated into
\begin{align*}
 &\frac{-\pi i}{z\br{-q;q^2}_\infty}\sum_{n\leq 0} \underset{v=-izn}{\Res}\br{e^{\frac{2v^2}{z}-biv}\br{-e^{-2riv}q;q^2}_\infty \frac{1}{\sin(\pi i v/z)} }\\
 &=\frac{-1}{\br{-q;q^2}_\infty}\sum_{n\leq 0}
 (-1)^n
 e^{-\br{2n^2+bn}z}\br{-e^{-2rnz}q;q^2}_\infty \\
 & =-\sum_{n\leq 0}\frac{(-1)^{(b+1)n} q^{2n^2+bn}}{\br{-q;q^2}_{rn}}
 \in-1+ z\Q\llbracket z\rrbracket
\end{align*}
if $q=-e^{-z}$. 
We set
\[
\phi_{r,b}(-1) \coloneqq \lim_{q\to-1}-\sum_{n\leq 0}\frac{(-1)^{(b+1)n} q^{2n^2+bn}}{\br{-q;q^2}_{rn}} = -1.
\]

Applying Lemmas~\ref{le:est2} and~\ref{le:pohest} to the integrand in \eqref{eq:integ2}, we deduce that
\begin{equation}
\label{eq:v2asymp}
\VVV_{r,b}(-e^{-z})=
\left(\frac{-i}{\sqrt{2}z}e^{\frac{\pi^2}{12z}+\frac{z}{24}} \int_\SSS e^{f_r(v)/z} g_r(v) dv + \phi_{r,b}(-1)\right)(1+O(z)),
\end{equation}
where
$\mathcal{S}$ is the contour given in Figure~\ref{fig:con4}, and
\begin{align*}
f_r(v)&\coloneqq -\frac{\Li_2^\varphi (e^{-2riv})}{2}+2v^2-\sign(\Re(v/\varphi))\pi v \text{ and }\\
g_r(v)&\coloneqq \sign(\Re(v/\varphi))\exp\br{-biv}.
\end{align*}

\subsubsection{Analyzing the functions \texorpdfstring{$f_r(v)$}{} and \texorpdfstring{$g_r(v)$}{}}
Because of our branching of $\Li_2^\varphi$ explained in Section~\ref{sec:polylogs}, both $f_r(v)$ and $g_r(v)$ are analytic functions in $v$ on the domain
\begin{equation}
\label{eq:domain}
\CC\setminus\Biggl(\{\varphi i\RR\} \cup \bigcup_{n\in\ZZ\setminus \{0\}}\left\{\frac{n\pi}r+\varphi i\RR_{>0}\right\}\Biggr).
\end{equation}

More precisely, the branch cuts of $f_r(v)$ are depicted in Figure~\ref{fig:con4}.
\begin{figure}[H]
\centering
 \begin{tikzpicture}[scale=0.7]
 \draw [->, thick] (-4.1, 0) -- (4.1, 0) node [pos=1.07, left] {$\RR$};
 \draw [->, thick] (0, -4.1) -- (0, 4.1) node [pos=1, above] {$i\RR$};

 \draw[red, thick] (-2,3.464) -- (2, -3.464) node[scale=0.8] [right] {$\Re(v/\varphi)=0$};
 \draw[red, thick] (3.4,0) -- (1.4, 3.464) node[scale=0.8] [right] {$\Re((rv-\pi)/\varphi)=0$};
 \draw[red, thick] (-3.4,0) -- (-5.4, 3.464) node[scale=0.8] [left] {$\Re((rv+\pi)/\varphi)=0$};
 
 \node [circ] at (0, 0) {};
 \node [circ] at (-1.08, 0) {};
 \node [circ] at (1.08, 0) {};
 \node[circ] at (-0.6,1.04) {};
 
 \draw[blue, thick, rotate=-150, -<-=0.4] ((0,-1.2) -- (3.5,1.25) node [left] {$\mu$};
 \draw[blue, thick, rotate=-150, ->-=0.6] ((0,-1.2) -- (-2.2,2.8) node [right] {$\mu^\prime$};

 \node [above] at (-1.4, 0) {$-\frac{\pi}{6r}$};
 \node [above] at (1.3, 0) {$\frac{\pi}{6r}$};
 \node [above] at (-0.5, 1.04) {$\xi$};
 \end{tikzpicture}
 \caption{The contour $\SSS = \mu \cup \mu'$ (after applying Lemma \ref{le:est2}).}
 \label{fig:con4}
\end{figure}

Upon taking derivatives, we see that
\bea
f_r^\prime(v)&=-ri\log(1-e^{-2riv}) + 4v-\sign(\Re(v/\varphi))\pi,\\
f_r^{\prime \prime}(v) &= \frac{2r^2}{e^{2irv}-1}+4.
\eea
The critical points $v_0$ of $f_r$ satisfy $f_r'(v_0) =0$, and thus
$
(1-e^{-2riv_0})^r=-e^{-4iv_0}.
$
Equivalently, $e^{-2riv_0}= e^{\pm\pi i/3}$ if $r=1$ or $r=2$.

Given the shape of the contour and the branch cuts as in
Figure~\ref{fig:con4}, we split the contour
$\SSS=\mu\cup\mu'$ passing through the saddle points
$v_0=-\pi/(6r)$ and $v_0'=\pi/(6r)$.
Let $\xi$ be the intersection point of the ray
$\{\, i\varphi t : t\ge 0 \,\}$ with the line
$v_0 + s(-\sqrt{i\varphi})$, $s\in\mathbb{R}$.
We define the two contour pieces as
\begin{align*}
\mu(s)
&:= v_0+s(\xi - v_0),\qquad{s\in(-\infty,1)}\\
\mu'(s)
&:= v_0'-s(\xi - v_0'),\qquad{s\in(-1,\infty)}
\end{align*}
The contour $\mu$ runs from infinity to $\xi$ passing through
the saddle point $v_0$, while $\mu'$ runs from $\xi$ through
the saddle point $v_0'$ to infinity.

If we parameterize $\mu$ as $v=-\pi/(6r)+is\sqrt{z}$ in a small neighbourhood around $v_0$, the contribution corresponding to the stationary point $v_0=-\pi/(6r)$ becomes 
\bea\label{finalintzeta1}
\sqrt{\frac{1}{2z}}e^{\pi^2/12z+f_r(v_0)/z 
}\int_{-\infty}^\infty\exp\left({-f_r^{\prime\prime}(v_0)\frac{s^2}{2}
}\right)g_r(-\pi/(6r)+is\sqrt{z})ds.
\eea
We have 
\[
\frac{\pi^2}{12}+f_r(v_0)=\frac{\pi^2}{12}-\frac{\Li_2^{\varphi}(e^{-2riv_0})}{2}+2v_0^2+\pi v_0=\overline {W}_{2r}=\begin{cases}
 -\dfrac{\pi^2}{24}-\dfrac{D( e^{\pi i/3})i}{2} &\text{for}\ r=1,\\[8pt]
 -\dfrac{D( e^{\pi i/3})i}{2} &\text{for}\ r=2,
\end{cases}
\] 
where $D(\cdot)$
denotes the Bloch--Wigner dilogarithm.
Moreover, with $f_{1}''(v_0) = 3-\sqrt{-3}$ and 
$f_2''(v_0) =4\sqrt{-3}$, the integral in~\eqref{finalintzeta1} equals
\begin{align*}
 &\sqrt{\frac{1}{2z}}e^{\pi^2/12z+f_r(v_0)/z
 +bi \frac{\pi}{6r}}
\int_{-\infty}^\infty
\exp\br{
 -f_r^{\prime\prime}(v_0)\frac{s^2}{2}
}ds +O(z^{1/2})\\
\=&  e^{\pi^2/12z+f_r(v_0)/z
 +bi \frac{\pi}{6r}}
\sqrt{\frac{\pi}{zf_r^{\prime\prime}(v_0)}}+O(z^{1/2})\\
\= &\frac{e^{\overline{W}_{2r}/z}}{\sqrt{z}}\overline{\gamma}_{r,b}(-1)+ O(z^{1/2})
\end{align*}
where $\gamma_{r,b}(-1)$ is defined in~\eqref{defgammazeta1}.

Upon parameterizing $\mu^\prime$ as $v=\pi/6r-is\sqrt{z}$ in a small neighbourhood around $v_0^\prime$, we obtain
\begin{align*}
&\sqrt{\frac{1}{2z}}e^{\pi^2/12z+f_r(v_0^\prime)/z
}
\int_{-\infty}^{\infty}\exp\br{-f_r^{\prime\prime}(v_0^\prime)\frac{s^2}{2}
}g_r(\pi/6r-is\sqrt{z})ds
\=\frac{e^{W_{2r}/z}}{\sqrt{z}}(\gamma_{r,b}(-1)+O(z)).
\end{align*}

Plugging in the appropriate asymptotics back in~\eqref{eq:v2asymp} proves~\eqref{eq:asympvvvzeta1}.

\subsubsection{Steepest descent principle (saddle-point method)}
A careful reader might ask why we can change the integral from a ``small neighbourhood'' to $(-\infty,\infty)$.
We discuss this in greater rigour in this section.

Using the Taylor expansion at the saddle point $v_0$, we have
\[
f_r(v)=f_r(v_0)+\frac{f_r''(v_0)}{2}(v-v_0)^2+O((v-v_0)^3),
\]
since $f_r'(v_0)=0$.
Now parameterize the contour locally by
\[
v=v_0+is\sqrt{z},
\]
where $s\in\RR$.
Then
\[
(v-v_0)^2=-s^2z,
\]
and therefore
\[
\frac{f_r(v)-f_r(v_0)}{z}
=
-\frac{f_r''(v_0)}{2}s^2+O(s^3\sqrt{z}).
\]
Taking real parts gives
\[
\Re\!\left(\frac{f_r(v)-f_r(v_0)}{z}\right)
=
-\frac{\Re(f_r''(v_0))}{2}s^2+O(s^3 \sqrt{z} ).
\]
We check that
\[
f_r''(v_0)=
\begin{cases}
3-\sqrt{-3} & \text{when } r=1,\\
4\sqrt{-3} & \text{when } r=2.
\end{cases}
\]
We see that $\Re(f_r''(v_0))\ge 0$ in both cases and the contour is chosen so that the quadratic form has negative real part, i.e.,
\[
\Re\!\left(\frac{f_r(v)-f_r(v_0)}{z}\right)\le -cs^2
\]
for some $c>0$.
In the case, $\Re(f_r''(v_0))=0$, decay is not obtained from the real part of $f_r''(v_0)$ in the original coordinate.
Instead, writing $f_r''(v_0)=\abs{f_r''(v_0)}e^{i\theta}$ and rotating the contour in the direction $v-v_0=s\sqrt{z}\,e^{-i\theta/2}$, we obtain
\[
\frac{f_r(v)-f_r(v_0)}{z}
=
-\frac{\abs{f_r''(v_0)}}{2}s^2 + O(s^3\sqrt{z}),
\]
so that exponential decay follows from the steepest descent direction.

Similarly, at the second saddle point $v_0'=\pi/6r$, we use the local parametrization
\[
v=v_0'-is\sqrt{z}, \qquad \text{ where }  s\in\RR.
\]
The global contour passes through $v_0$ and $v_0'$ along opposite edges of the wedge in Figure~\ref{fig:con4}, and hence the local coordinates differ by a sign.
This leads to the parametrizations $v=v_0+is\sqrt{z}$ and $v=v_0'-is\sqrt{z}$.

Once again, using the Taylor expansion,
\[
f_r(v)=f_r(v_0')+\frac{f_r''(v_0')}{2}(v-v_0')^2+O((v-v_0')^3),
\]
we obtain
\[
\frac{f_r(v)-f_r(v_0')}{z}
=
-\frac{f_r''(v_0')}{2}s^2+O(s^3\sqrt{z}).
\]
We can calculate that
\[
f_r''(v_0')=
\begin{cases}
3+\sqrt{-3} & \text{when } r=1,\\
-4\sqrt{-3} & \text{when } r=2.
\end{cases}
\]
So, we note that in both cases the contour is chosen so that
\[
\Re\!\left(\frac{f_r(v)-f_r(v_0')}{z}\right)\le -c s^2
\]
for some $c>0$, ensuring exponential decay away from $v_0'$.
We refer the reader to \cite{olver} for more details on the method of steepest descent.

\subsection{Proof of Theorem~\ref{thm:asympv2}}\label{sec32}
We now give the proof of the asymptotics of $\VVV_{r,b}(q)$ for $q=\zeta e^{-z}$ where $\zeta=e(\frac{a}{2d})$ is a root of unity of order $2d$ with $d$ odd.

We write 
\begin{equation}\label{sumVVV}
\VVV_{r,b}(q)=\sum_{n_0=1}^{d} \VVV_{r,b}^{[n_0]}(q),\qquad\qquad
\VVV_{r,b}^{[n_0]}(q) \coloneq
\sum_{\substack{n\geq 1 \\ n\equiv n_0 \mod{d}}}\frac{\br{-1}^{(b+1)n} q^{2n^2+bn}}{\br{-q;q^2}_{rn}}
\end{equation}
for $n_0\in\cbr{1,\dots,d}$.
When $\zeta=-1$, we have $d=1$ and $n_0=1$.
So, we did not need to decompose the sum above.
We begin with a lemma which allows us to understand the numerator of the summand in \eqref{sumVVV} if $q=\zeta e^{-z}$.

\begin{lemma}
\label{le:zeta}
Let $\zeta=e(\frac{a}{2d})$ be a root of unity of order $2d$ with $d$ odd and let $b \in \mathbb Z$. Fix $n_0\in\{1,\ldots,d\}$ as above.
If $n\equiv n_0\mod{d}$, then
 \[
 (-1)^{(b+1)n}\zeta^{n(2n+b)}=
 (-1)^{(b+1)n_0}\zeta^{n_0(2n_0+b)}
 \, (-1)^{(n-n_0)/d}.
 \]
\end{lemma}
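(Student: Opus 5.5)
Write $n=n_0+kd$ with $k\in\Z_{\ge0}$ (indeed $k=(n-n_0)/d$), substitute into both factors, and compare with the values at $n_0$. The proof is a direct computation, so the plan is simply to organise the exponents and use that $\zeta^{d}=e(a/2)=(-1)^a$. We also use that $a$ is odd: $\gcd(a,2d)=1$ because $\zeta$ has order exactly $2d$.

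\emph{Sign factor.} We have $(-1)^{(b+1)n}=(-1)^{(b+1)n_0}(-1)^{(b+1)kd}=(-1)^{(b+1)n_0}(-1)^{(b+1)k}$, since $d$ is odd.

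\emph{Power of $\zeta$.} Expanding,
\[
n(2n+b)=n_0(2n_0+b)+kd\,(4n_0+b)+2k^2d^2 .
\]
Since $\zeta^{2d}=1$, the term $2k^2d^2$ contributes $\zeta^{2d\cdot k^2d}=1$. For the middle term, $\zeta^{kd(4n_0+b)}=\bigl((-1)^a\bigr)^{k(4n_0+b)}=(-1)^{kb}$, using that $a$ is odd and $4n_0$ is even. Hence
\[
\zeta^{n(2n+b)}=\zeta^{n_0(2n_0+b)}(-1)^{kb}.
\]

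\emph{Combining.} Multiplying the two pieces gives the extra factor $(-1)^{(b+1)k}(-1)^{kb}=(-1)^{(2b+1)k}=(-1)^k=(-1)^{(n-n_0)/d}$, which is the claim.

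The only point needing care is the parity bookkeeping. Oddness of $d$ is used to reduce $(-1)^{(b+1)kd}$, and oddness of $a$ is used to evaluate $\zeta^d=-1$. There is no real obstacle beyond this.
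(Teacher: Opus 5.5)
Your proposal is correct and follows the paper's proof essentially verbatim. You write $n=n_0+kd$, discard the $2k^2d^2$ and $4kdn_0$ contributions using $\zeta^{2d}=1$, evaluate the $bkd$ term as $(-1)^{kb}$ because $a$ is odd, and combine this with the sign factor using that $d$ is odd. One small note: you do not need $k\ge 0$, since the same computation holds for every integer $k$.
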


\begin{proof}
For ease of notation, set $\alpha \coloneq \frac{a}{2d}$.
Let $n =n_0+kd$ for some $k\in \Z$.
We compute
\begin{align*}
 \zeta^{n(2n+b)}&=e(\alpha n(2n+b))
 =e\br{\alpha\br{kd+n_0}\br{2\br{kd + n_0} + b}}\\
 &=e\br{\alpha\br{2k^2d^2+4kdn_0+2n_0^2+bkd+bn_0}}.
\end{align*}
We have $e(2\alpha k^2d^2) = 1$ and $e(4\alpha kdn_0) = 1$, hence
\begin{align*}
 \zeta^{n(2n+b)}&=e\Bigl(\alpha n_0(2n_0+b)\Bigr)e\Bigl(ab\frac{n-n_0}{2d}\Bigr)
 =\zeta^{n_0(2n_0+b)}(-1)^{b(n-n_0)/d}.
\end{align*}
The last equality is because $a$ is odd.
Then, we have, because $d$ is odd,
\bea
(-1)^{(b+1)n}\zeta^{n(2n+b)}
= &(-1)^{(b+1)(n-n_0)/d}
  (-1)^{(b+1)n_0}
  (-1)^{b(n-n_0)/d}
  \zeta^{n_0(2n_0+b)}\\
= &(-1)^{(n-n_0)/d}\,
  (-1)^{(b+1)n_0}
  \zeta^{n_0(2n_0+b)}
\eea as claimed.
\end{proof}

This next lemma generalizes Lemma~\ref{le:integrep}.
The calculations are analogous and we only highlight the key differences in the proof.
We do the calculations for each summand $\VVV_{r,b}^{[n_0]}(q)$ and for the final answer, we can sum over $1\leq n_0 \leq d$.

\begin{lemma}
\label{le:geninteg}
Let \(L_\infty\) denote the limiting contour obtained from \(L_R\), depicted in Figure~\ref{fig:con}, as \(R\to\infty\).
Set $q=\zeta e^{-z}$.
Then
\[
\VVV_{r,b}^{[n_0]}(q)=\frac{i\br{-1}^{{{(b+1)}} n_0}\zeta^{2n_0^2+bn_0}}{2d\br{-q;q^2}_\infty}\int_{L_\infty}e^{-z\br{2s^2+bs}}\frac{\br{-\zeta^{2rn_0}e^{-2rzs}q;q^2}_\infty}{\sin\br{\pi(s-n_0)/d}}ds.
\]
\end{lemma}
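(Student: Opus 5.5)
The plan mirrors the proof of Lemma~\ref{le:integrep}: apply the residue theorem to the contour $L_R+C_R$ of Figure~\ref{fig:con}, now with the kernel $1/\sin(\pi(s-n_0)/d)$ in place of $1/\sin(\pi s)$. This kernel has simple poles exactly at $s=n_0+kd$, $k\in\Z$, with residue $\frac{d}{\pi}(-1)^{k}=\frac{d}{\pi}(-1)^{(s-n_0)/d}$. The contour encloses the poles with $s>0$, i.e.\ $s=n\ge 1$ with $n\equiv n_0\pmod d$, since $n_0\ge1$. Cauchy's theorem then gives, after letting $R\to\infty$,
\[
\frac{i}{2d(-q;q^2)_\infty}\int_{L_\infty}(\cdots)\,ds=\frac{\pi}{d(-q;q^2)_\infty}\sum_{\substack{n\geq1\\ n\equiv n_0 \ (d)}}\frac{d}{\pi}(-1)^{(n-n_0)/d}e^{-z(2n^2+bn)}\bigl(-\zeta^{2rn_0}e^{-2rzn}q;q^2\bigr)_\infty ,
\]
where the orientation sign is the same as in Lemma~\ref{le:integrep}. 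I also need the integral over $C_R$ to vanish in the limit, which I take up below.

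The second step matches each residue with the summands of $\VVV_{r,b}^{[n_0]}(q)$. Since $\zeta^{2d}=1$ and $n\equiv n_0\pmod d$, we have $\zeta^{2rn}=\zeta^{2rn_0}$. With $q=\zeta e^{-z}$ this gives $-\zeta^{2rn_0}e^{-2rzn}q=-q^{2rn+1}$, so
\[
\frac{(-q^{2rn+1};q^2)_\infty}{(-q;q^2)_\infty}=\frac{1}{(-q;q^2)_{rn}}.
\]
The remaining factor $e^{-z(2n^2+bn)}$ combines with the prefactor $(-1)^{(b+1)n_0}\zeta^{2n_0^2+bn_0}$ and the sign $(-1)^{(n-n_0)/d}$. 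By Lemma~\ref{le:zeta}, this product equals $(-1)^{(b+1)n}\zeta^{2n^2+bn}e^{-z(2n^2+bn)}=(-1)^{(b+1)n}q^{2n^2+bn}$. Summing over $n$ then yields exactly $\VVV_{r,b}^{[n_0]}(q)$ as defined in~\eqref{sumVVV}.

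The main obstacle is the analytic justification: the integral over $L_\infty$ must converge, and the integral over the arc $C_R$ must vanish as $R\to\infty$. I would follow \cite[Lemma~3.3]{folsom2023oscillating}, as Lemma~\ref{le:integrep} does.
\begin{itemize}
\item The Gaussian factor $|e^{-2zs^2}|$ decays on the sector, because the opening angles of Figure~\ref{fig:con} are chosen so that $\Re(zs^2)>0$ there.
\item The Pochhammer factor $(-\zeta^{2rn_0}e^{-2rzs}q;q^2)_\infty$ grows at most like $e^{O(|s|)}$ or $e^{O(|s|^2|z|)}$ with a smaller constant, via a standard bound.
\item By Lemma~\ref{le:est}(1), $1/\sin(\pi(s-n_0)/d)$ decays exponentially away from the real axis and is bounded on arcs chosen to avoid the poles, e.g.\ with $R\in n_0+d(\Z+\tfrac12)$.
\end{itemize}
The only change from the case $d=1$ is the rescaling by $d$ in the sine, and the estimates carry over verbatim.
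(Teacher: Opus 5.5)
Your proposal is correct and is essentially the paper's proof. Both apply the residue theorem on $L_R+C_R$ with the kernel $1/\sin(\pi(s-n_0)/d)$, whose residues $\frac{d}{\pi}(-1)^{(n-n_0)/d}$ at $s=n\equiv n_0 \mod{d}$ are matched to the summands of $\VVV_{r,b}^{[n_0]}(q)$ via Lemma~\ref{le:zeta} and $\zeta^{2rn}=\zeta^{2rn_0}$, with convergence deferred to \cite[Lemma~3.3]{folsom2023oscillating}. The only differences are cosmetic: the paper applies Lemma~\ref{le:zeta} before invoking Cauchy's theorem rather than after, and your growth bound for the Pochhammer factor can be sharpened to boundedness, since $\Re(zs)>0$ on the sector.
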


\begin{proof}
Using Lemma~\ref{le:zeta}, we have
\begin{align*}
\VVV_{r,b}^{[n_0]}(q) & \= \sum_{\substack{n\geq 1 \\ n\equiv n_0 \mod{d}}}\frac{\br{-1}^{{(b+1)n}} q^{2n^2+bn}}{\br{-q;q^2}_{rn}}.
\\ 
& = \frac{
       \br{-1}^{{(b+1)n_0}}
       \zeta^{2n_0^2+bn_0}
    }{\br{-q;q^2}_\infty}\sum_{\substack{n\geq 1 \\ n\equiv n_0 \mod{d}}} (-1)^{(n-n_0)/d}
    e^{-z(2n^2+bn)}\br{-\zeta^{2rn_0}e^{-2rzn}q;q^2}_\infty.
\end{align*}

Observe that the function $\frac{1}{\sin(\pi(s-n_0)/d)}$ has poles at integral values of $s=n\in\Z$ satisfying $s\equiv n_0\mod{d}$ with residues $\frac{d}{\pi}(-1)^{(n-n_0)/d}$.
By Cauchy's residue theorem, we write
\begin{align*}
&\VVV_{r,b}^{[n_0]}(q)\\
&=-\frac{(-1)^{{(b+1)n_0}}
       \zeta^{2n_0^2+bn_0}}{2\pi i\br{-q;q^2}_\infty}\lim_{R\to\infty}\int_{L_R+C_R}e^{-z(2s^2+bs)}\br{-\zeta^{2rn_0}e^{-2rzs}q;q^2}_\infty\frac{\pi}{d\sin(\pi(s-n_0)/d)}ds
\\
&=\frac{i\br{-1}^{{{(b+1)}} n_0}\zeta^{2n_0^2+bn_0}}{2d\br{-q;q^2}_\infty}\int_{L_\infty}e^{-z(2s^2+bs)}\frac{\br{-\zeta^{2rn_0}e^{-2rzs}q;q^2}_\infty}{\sin(\pi(s-n_0)/d)}ds.
\end{align*}
The proof of convergence is analogous to Lemma~\ref{le:integrep}.
\end{proof}

We now prove a slightly stronger version of Theorem~\ref{thm:asympv2}, which in particular establishes our intended result.

\begin{prp}
\label{prp:asympv2}
Let $\zeta =e\br{\frac{a}{2d}}$ be a root of unity of order $2d$ with $d$ odd.
For $n_0\in\cbr{1,\dots, d}$
\[
\VVV_{r,b}^{[n_0]}(\zeta e^{-z})\=
\Biggl(e^{W_{2r}/(d^2z)}
\frac{\gamma_{r,b}^{[n_0]}(\zeta)}{\sqrt{z}}
+e^{\overline{W}_{2r}/(d^2z)}
\frac{\overline{\gamma}_{r,b}^{[n_0]}(\zeta)}{\sqrt{z}}
+\phi_{r,b}^{[n_0]}(\zeta)\Biggr)
(1+O(z)).
\]
as $q=\zeta e^{-z}\to\zeta$, with $W_{2r}$ as in~\eqref{eq:Wdef} and where $\gamma_{r,b}^{[n_0]}(\zeta), \phi_{r,b}^{[n_0]}(\zeta) \in\CC$ are defined in \eqref{eq:gamm} and~\eqref{eq:phi}.
\end{prp}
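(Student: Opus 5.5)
We follow the three-step scheme from the special case $\zeta=-1$, now starting from the integral representation of Lemma~\ref{le:geninteg} for $\VVV_{r,b}^{[n_0]}(\zeta e^{-z})$.

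\textbf{Change of variables and the prefactor.} Substitute $v=-izs$ (as in \eqref{eq:integ2}). The Gaussian factor $e^{-z(2s^2+bs)}$ becomes $e^{2v^2/z-biv}$, and the sine becomes $\sin(\pi(iv/z-n_0)/d)$. We then deform $-izL_\infty$ to a contour $\SSS$ crossing the real axis near the expected saddle points. The poles of the sine that are crossed lie at $v=-izn$ with $n\le 0$, $n\equiv n_0 \pmod d$. Their residues sum, exactly as before, to the finite sum $-\sum_{n\le0,\,n\equiv n_0 (d)}(-1)^{(b+1)n}q^{2n^2+bn}/(-q;q^2)_{rn}$, whose limit as $q\to\zeta$ defines $\phi_{r,b}^{[n_0]}(\zeta)$. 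This is a constant plus $O(z)$, since only finitely many terms are nonzero as formal expressions (the $n=0$ term, together with the negative-index terms interpreted through $(-q;q^2)_{-k}$).

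For the remaining integral we need two inputs. First, we apply Lemma~\ref{le:est}(1) with $\alpha=\pi/d$, $t=v/z$ to replace $1/\sin$ by $\mp 2i\exp(\mp\pi(v/z+n_0 i)/d)$. Second, we write $\zeta=e(a/(2d))$ and $q=\zeta e^{-z}$, and apply Lemma~\ref{le:pohest} with $w=-\zeta^{2rn_0}e^{-2riv}$, $k=2$, $j=1$, $m=2d$ to $(wq;q^2)_\infty$. Since $m/k=d$, the leading term is $-\frac{1}{dz}\Li_2^\varphi\bigl((-\zeta^{2rn_0+1}e^{-2riv})^{d}\bigr)$. Because $\zeta^{d}=-1$, $d$ is odd, and $\zeta^{2rn_0 d}=1$, this argument simplifies to $e^{-2ridv}$. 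The prefactor $1/(-q;q^2)_\infty$ is handled by Lemma~\ref{le:est2}(2), which contributes $e^{\pi^2/(3m^2z)}=e^{\pi^2/(12d^2z)}$.

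\textbf{Rescaling and saddle points.} After the rescaling $v=u/d$, the exponent becomes $\frac{1}{d^2z}\bigl(\frac{\pi^2}{12}+f_r(u)\bigr)$ with the same $f_r$ as in the $\zeta=-1$ case. The quadratic term $2v^2/z$ becomes $2u^2/(d^2z)$; the dilogarithm term becomes $-\frac{1}{dz}\Li_2(e^{-2riu})=-\frac{d}{d^2z}\Li_2$. To reconcile the extra factor of $d$ here, we use the distribution property relating $\Li_2$ at $e^{-2riu}$ with the $d$-fold sum that Lemma~\ref{le:pohest} already absorbed. The sign term $\mp\pi v/(dz)$ becomes $\mp\pi u/(d^2z)$. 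This bookkeeping of powers of $d$ is the first point to check carefully, since it should produce exactly $W_{2r}/(d^2z)$. Granting it, the critical points are again $u_0=\mp\pi/(6r)$, and the saddle values are $\overline W_{2r}$ and $W_{2r}$.

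All the $n_0$- and $\zeta$-dependence then lives in the amplitude $g$. It collects three things: the constant $(-1)^{(b+1)n_0}\zeta^{2n_0^2+bn_0}e(\mp n_0/(2d))$; the factors $\exp\bigl(\sum_t\frac{t}{m}\Li_1^\varphi(\cdot)-(1+\tfrac{1}{m})\Li_1^\varphi(\cdot)\bigr)$ coming from Lemma~\ref{le:pohest}; and the constants $Q(\zeta)/R(\zeta^2)$. Evaluating $g$ at the saddle points and multiplying by $\sqrt{2\pi/(f_r''(u_0)\cdot d^2z)}$ gives the definitions of $\gamma^{[n_0]}_{r,b}(\zeta)$ and $\overline{\gamma}^{[n_0]}_{r,b}(\zeta)$. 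Conjugate symmetry between the two saddle points follows from $u\mapsto-\bar u$.

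\textbf{Main obstacle.} The main obstacle is the contour deformation. One must verify that, for every ray of $z$ and with the branch of $\Li^\varphi$ rotated accordingly, $\SSS$ can pass through both saddle points along steepest-descent directions without crossing the additional cut lines at $n\pi/(rd)$ that appear after rescaling. One must also verify that $\Re\bigl(f_r(u)-f_r(u_0)\bigr)/z$ stays bounded away from $0$ on the tails, so that the tails contribute $O(\sqrt z)$ relative to the main terms. For this I would reuse the local analysis of $f_r''(v_0)$ from the previous subsection, together with the decay of $e^{2u^2/(d^2z)}$ at infinity along the chosen directions.
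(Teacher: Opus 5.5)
Your route is the paper's: Lemma~\ref{le:geninteg}, the substitution $v=-izs$, a contour shift whose crossed poles produce $\phi^{[n_0]}_{r,b}(\zeta)$, Lemmas~\ref{le:est2} and~\ref{le:pohest} for the two Pochhammer symbols, and a saddle-point evaluation. Your rescaling $u=dv$ is cosmetic: the paper's phase $-\Li_2^\varphi(e^{-2driv})/(2d^2)+2v^2\mp\pi v/d$ is just $d^{-2}$ times the $\zeta=-1$ phase evaluated at $dv$.

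However, the step you ``grant'' is exactly where your computation goes wrong. Lemma~\ref{le:pohest} is normalised with $q=\zeta e^{-z/m}$. For $q=\zeta e^{-z}$ you must therefore replace its $z$ by $mz=2dz$, and the leading term becomes
\[
-\frac{k}{m\cdot mz}\,\Li_2^\varphi\bigl((w\zeta)^{d}\bigr) = -\frac{1}{2d^2z}\,\Li_2^\varphi\bigl(e^{-2ridv}\bigr),
\]
not $-\frac{1}{dz}\Li_2^\varphi(\cdots)$. Two quick checks confirm this:
\begin{itemize}
\item For $d=1$ your coefficient $-1/z$ contradicts the factor $-\frac12$ in the $\zeta=-1$ phase you are trying to recover.
\item With $w=-1$, your normalisation would give $(-q;q^2)_\infty$ the leading exponential $e^{-\pi^2/(6dz)}$, whereas Lemma~\ref{le:est2}(2) gives $e^{-\pi^2/(12d^2z)}$.
\end{itemize}
So the discrepancy is a factor $2d=m$, not $d$. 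The distribution property cannot absorb it. Lemma~\ref{le:pohest} has already used that relation to collapse the $m$ dilogarithms $\Li_2^\varphi(\zeta^{2t-1}w)$ into the single term above, and it cannot rescale that term a second time.

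Once the coefficient is corrected, there is nothing left to reconcile. Substituting $u=dv$ directly gives the exponent $\frac{1}{d^2z}\bigl(\frac{\pi^2}{12}-\frac12\Li_2^\varphi(e^{-2riu})+2u^2\mp\pi u\bigr)$, i.e.\ the $\zeta=-1$ exponent divided by $d^2z$. The saddle points, the values $\overline W_{2r}$ and $W_{2r}$, and the second derivatives then come out as in the paper.

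One further caution concerns your claim that the two saddle contributions are complex conjugates via $u\mapsto-\bar u$. That symmetry conjugates the $v$-dependence of the amplitude. It does not conjugate the constant $(-1)^{(b+1)n_0}\zeta^{2n_0^2+bn_0}R(\zeta^2)/Q(\zeta)$, nor the roots of unity inside $\sum_t\frac{t}{m}\Li_1^\varphi(-\zeta^{2t-1+2rn_0}e^{-2riv})$. Schwarz reflection relates the expansion at $\zeta$ to the one at $\bar\zeta$, not to itself. So for general $\zeta$ you only get ``the analogous constant at $v_0'$''. That is also all the paper's argument provides. Literal conjugation is needed later only at $\zeta=-1$, where the prefactor is real and the symmetry does hold.
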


\begin{proof}
Set $z=\varphi h$, where $h\in\R_{>0}$ and $\varphi\in\CC$ with $\abs{\varphi}=1$ and $0\neq\abs{\arg(\varphi)}<\frac{\pi}{2}$.
Substituting $s=iv/z$ into the integral representation from Lemma~\ref{le:geninteg}, we obtain
\begin{equation}
\label{eq: to use}
\VVV_{r,b}^{[n_0]}(q)=
-\frac{(-1)^{{(b+1)n_0}}\zeta^{2n_0^2+bn_0}}{2dz(-q;q^2)_\infty}
\int_{-izL_\infty}e^{2v^2/z-biv}
\frac{(-\zeta^{2rn_0}e^{-2riv}q;q^2)_\infty}{\sin(\pi(iv/z-n_0)/d)}
dv.
\end{equation}
We change the contour of integration to a contour $\SSS$ similar to the one in Figure~\ref{figcontourSSS} to make it pass through the points $\pm \frac{\pi}{6rd}$. Since we have to move the contour over the poles at $2izn$ with $n\in\ZZ_{>0}$ and $n\equiv n_0 \mod d$, we obtain the additional term from the residues given by the power series
\[
\sum_{\substack{n\leq 0 \\ n\equiv n_0 \mod{d}}}\frac{\br{-1}^{(b+1)n} q^{2n^2+bn}}{\br{-q;q^2}_{rn}} \in\Q[[z]].
\]
As $q\to\zeta$, we write \bea\label{eq:phi}\phi_{r,b}^{[n_0]}(\zeta) \coloneqq
\sum_{\substack{n\leq 0 \\ n\equiv n_0 \mod{d}}}\frac{\br{-1}^{(b+1)n} \zeta^{2n^2+bn}}{\br{-\zeta;\zeta^2}_{rn}}.
\eea
Consider the following holomorphic function in the domain defined in~\eqref{eq:domain}
\[
f_r(v)\coloneqq- \frac{\Li_2^\varphi\br{e^{-2driv}}}{2d^2}+2v^2-\frac{\sign(v/\varphi)\pi v}{d}.
\]
This function is holomorphic because $\Li_2^\varphi(e^{-2driv})$ undergoes a jump of $4d\pi rv$ when $v$ crosses the branch cut at $\Re(v/\varphi)=0$.
Next define
\begin{multline*}
g_r(v) \coloneqq\sign(\Re(v/\varphi))\\\times \exp\left(iv-\sign(\Re(v/\varphi))\frac{\pi in_0}{d}
-\frac {4d+1} {4d}\Li_1^\varphi(e^{-2div})
+\sum_{t=1}^{2d}\frac{t}{2d}\Li_1^\varphi (-\zeta^{2t+2n_0-1}e^{-2iv}) \right).
\end{multline*}

Note that as $z\to 0$, an application of the asymptotics from Lemmas~\ref{le:est2} and~\ref{le:pohest} to the integrand in~\eqref{eq: to use}, yields that $\VVV_{r,b}^{[n_0]}(q)$ agrees with
\bea\label{SaddlePM}
\frac{i(-1)^{{(b+1)n_0}}\zeta^{2n_0^2+bn_0}R\br{\zeta^2}}{\sqrt 2 d zQ(\zeta)} \exp\biggl(\frac{\pi^2}{12d^2 z}\biggr)
\int_\SSS e^{f_r(v)/z} g_r(v) dv + \phi_{(\alpha)}^{[n_0]}(z)
\eea
up to $O(\sqrt{z})$.
We remind the reader that 
\begin{align*}
Q(\zeta)=e\br{\frac{s(-a,d)-s(-a,2d)}{2}}, \qquad\qquad
R(\zeta^{2}) =e\br{\frac{s(-4a,2d) -s(-2a,2d)}{2}},
\end{align*}
where $s(\cdot, \cdot)$ denotes the Dedekind sum~\eqref{eq:dedekindsum}.

We may now apply the saddle-point method to the integral in~\eqref{SaddlePM}. 
In order to do so, we must find the stationary points of $f_r$.
For this, note that
\begin{align*}
f_r^\prime(v)&\=-\frac{ri\log(1-e^{-2driv})}{d}+4v-\frac{\sign(v/\varphi) 2\pi}{2d},\\
f_r''(v) &\= \frac{2 r^2}{1 - e^{2d r i v}} + 4.
\end{align*}
A stationary point $v_0$ of $f_r$ must satisfy $f_r^\prime(v_0) =0$.
Upon simplification, this implies $(1-e^{-2driv_0})^r=-e^{-4div_0}$ which is equivalent to $e^{-2driv_0}= e^{\pm\pi i/3}$ because $r=1,2.$ In view of the contour $\mathcal S$, we choose $v_0=-\frac{\pi}{6dr}$ and $v_0'=\frac{\pi}{6dr}$.

The contribution to $\VVV_{r,b}^{[n_0]}(q)$ from the saddle point $v_0=-\frac{\pi}{6rd}$ yields
\begin{equation}
\label{star}
-\frac{\zeta^{2n_0^2+bn_0}R\br{\zeta^2}}{\sqrt{2z}dQ(\zeta)}e^{\pi^2/12d^2z+f_r(v_0)/z}
\int_{-\infty}^\infty\exp\br{-f_r^{\prime\prime}(v_0)\frac{s^2}{2}
}
g_r\br{-\frac{\pi}{6rd}+is\sqrt{z}}ds;
\end{equation}
the path of integration passes through a small neighbourhood of $v_0$, and $f_{1}''(v_0) = 3-\sqrt{-3}$ and $f_2''(v_0) =4\sqrt{-3}$.
Note that we have
\[
\frac{\pi^2}{12d^2}+f_r(v_0) \= \frac{\overline{W}_{2r}}{d^2},
\]
where $W_{2r}$ is defined in~\eqref{eq:Wdef} and
\bea
\label{eq:gamm}
\gamma^{[n_0]}_{r,b}(\zeta) \=
&-\frac{\zeta^{2n_0^2+bn_0}R\br{\zeta^2}}{dQ(\zeta)}
g_r\br{-\frac{\pi}{3rm}}
\sqrt{\frac {\pi}{f_r''(v_0)}}.
\eea
Then,~\eqref{star} simplifies to $e^{\overline{W}_{2r}/(d^2z)}{\overline{\gamma}_{r,b}^{[n_0]}(\zeta)}/{\sqrt{z}}$.

Using the other stationary point
$v_0^\prime=\frac{\pi}{6rd}$, we obtain a similar expression as above; the only difference being $v_0$ gets replaced by $v'_0$ and $i$ by $-i$.
In other words, the contribution of the stationary point $v'_0$ is
$e^{W_{2r}/(d^2z)}{\gamma_{r,b}^{[n_0]}(\zeta)}/{\sqrt{z}}$.

Adding the contribution from $v_0$ and $v'_0$ as well as $\phi_{r,b}^{[n_0]}(\zeta)$ proves the claim in the theorem.
\end{proof}

\begin{proof}[Proof of Theorem~\ref{thm:asympv2}]
By \eqref{sumVVV}, the asymptotics of $\VVV_{r,b}(\zeta e^{-z})$ in Theorem~\ref{thm:asympv2} follows directly from Proposition~\ref{prp:asympv2} by summing up the asymptotic contributions from $\VVV_{r,b}^{[n_0]}(\zeta e^{-z})$ for $n_0=1,\ldots,d$ and setting
\begin{equation}
\label{eq:defgammaphi}
\gamma_{r,b}(\zeta) \;\coloneqq \sum_{n_0=1}^d\gamma_{r,b}^{[n_0]}(\zeta),\qquad\qquad
\phi_{r,b}(\zeta) \;\coloneqq \sum_{n_0=1}^d \phi_{r,b}^{[n_0]}(\zeta). \qedhere
\end{equation}
\end{proof}

\begin{corollary}\label{cor:v2v3v4}
Let $\zeta=-1$.
Then, as $z\to 0$ on a ray in the right half-plane,
\begin{align*}
v_2(-e^{-z}) &\= \left[\sqrt{\frac{\pi}{4\sqrt{3} z}}
\biggl(e^{W_2/z}(1+i) +e^{\overline{W}_2/z}(1-i)
\biggr)
-1\right]
\left(1+O\left(z\right)\right), \\
v_4(-e^{-z}) &\= \left[ \sqrt{\frac{\pi}{8\sqrt{3} z}}\left( e^{W_4/z}(1-i) + e^{\overline{W}_4/z}(1+i)\right) \right]\left(1+O(z)\right), \\
v_3(-e^{-z}) &\= v_1(e^{-z}) -2(v_4(-e^{-z})-1)\\
& \= -\left[ \sqrt{\frac{\pi}{2\sqrt{3} z}}\left( e^{W_4/z}(1-i) + e^{\overline{W}_4/z}(1+i)\right) \right]\left(1+O(z)\right).
\end{align*}
\end{corollary}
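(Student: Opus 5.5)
The plan is to specialize the general asymptotic \eqref{eq:asympvvvzeta1} at $\zeta=-1$, with the constants in \eqref{defgammazeta1}, to the two parameter choices that recover Andrews' series. Then $v_3$ follows from the identity \eqref{eq:pointed by KB}.

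\emph{The case $v_2$.} Since $v_2=\VVV_{1,-1}$, take $r=1$, $b=-1$. Then
\[
\gamma_{1,-1}(-1)=\sqrt{\tfrac{\pi}{2\sqrt3}}\,e^{\pi i/4}=\sqrt{\tfrac{\pi}{4\sqrt3}}\,(1+i),\qquad \phi_{1,-1}(-1)=-1.
\]
Substituting these into \eqref{eq:asympvvvzeta1}, together with $\overline{\gamma}_{1,-1}(-1)=\sqrt{\pi/(4\sqrt3)}\,(1-i)$, gives the first formula directly; this is exactly \eqref{eq:asympv2zeta1}.

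\emph{The case $v_4$.} Here $v_4=1+\VVV_{2,0}$, so take $r=2$, $b=0$. The constant $\phi_{2,0}(-1)=-1$ cancels the added $1$, and we get
\[
v_4(-e^{-z})=\frac{1}{\sqrt z}\Bigl(e^{W_4/z}\gamma+e^{\overline W_4/z}\overline\gamma\Bigr)(1+O(z)) + O(z).
\]
To absorb the additive $O(z)$ into the multiplicative error, use that $W_4$ is purely imaginary. Hence $|e^{W_4/z}|$ and $|e^{\overline W_4/z}|$ are reciprocal, so the larger of the two exponentials is at least $1$ in modulus. It remains to rule out cancellation between the two exponential terms. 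Away from $\arg z=0$ one term is exponentially dominant. Near the real axis the bracket has the form $2\Re(\gamma e^{W_4/z})$, which has modulus of order $|z|^{-1/2}$ except near isolated zeros. This is the same situation as in Theorem~\ref{thm:asympv2}, where such a bracket is already written with a multiplicative error, and I would follow that convention. Thus the bracket dominates $O(z)$ in the sense used throughout, and $O(z)$ can be absorbed.

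The one delicate point is the phase of $\gamma_{2,0}(-1)$. Formula \eqref{defgammazeta1} gives modulus $\sqrt{\pi/(4\sqrt3)}$. Together with the factor $1/\sqrt2$ from $e^{\pm\pi i/4}=(1\pm i)/\sqrt2$, this yields the stated $\sqrt{\pi/(8\sqrt3)}$. However, the argument depends on the branch of $\sqrt{\pi/f_2''(v_0)}$, since $f_2''(v_0)=4\sqrt{-3}$ is purely imaginary, and on the orientation of the contour through $v_0$ versus $v_0'$. I would recompute this directly from \eqref{finalintzeta1}, using the rotated steepest-descent direction $v-v_0=s\sqrt z\,e^{-i\theta/2}$. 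This should confirm that $e^{W_4/z}$ carries the factor $(1-i)$ and $e^{\overline W_4/z}$ the factor $(1+i)$. I expect this sign and branch bookkeeping to be the main obstacle.

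\emph{The case $v_3$.} Replace $q$ by $-e^{-z}$ in \eqref{eq:pointed by KB}:
\[
v_3(-e^{-z})=v_1(e^{-z})-2\bigl(v_4(-e^{-z})-1\bigr).
\]
By \cite[Theorem~1.2]{folsom2023oscillating}, $v_1(e^{-z})$ stays bounded as $z\to0$, since $q\to1$ is not a singularity producing exponential growth for $v_1$. Therefore $v_1(e^{-z})+2=O(1)$. This is absorbed into the multiplicative $(1+O(z))$ by the same domination argument as for $v_4$, because the $v_4$ bracket is at least of order $|z|^{-1/2}\gg1$. Multiplying the $v_4$ asymptotic by $-2$ gives the coefficient $-2\sqrt{\pi/(8\sqrt3)}=-\sqrt{\pi/(2\sqrt3)}$, which is the third formula.
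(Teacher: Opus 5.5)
Your route is the paper's: specialize \eqref{eq:asympvvvzeta1} at $(r,b)=(1,-1)$ and $(2,0)$, then use \eqref{eq:pointed by KB} with $v_1(e^{-z})=O(1)$. Your $v_2$ line is correct. The gap is the step you flag yourself, and its expected outcome is wrong.

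Formula \eqref{defgammazeta1} gives the full constant, not only its modulus. At $(r,b)=(2,0)$ the exponent is $(2\cdot 2-1-0)\pi i/12=\pi i/4$, so
\[
\gamma_{2,0}(-1)=\sqrt{\tfrac{\pi}{4\sqrt3}}\,e^{\pi i/4}=\sqrt{\tfrac{\pi}{8\sqrt3}}\,(1+i).
\]
This puts $(1+i)$ on $e^{W_4/z}$ and $(1-i)$ on $e^{\overline{W}_4/z}$, the opposite of the displayed formula.

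Recomputing at the saddle $v_0'=\pi/(6r)$ (the one producing $e^{W_{2r}/z}$) does not reverse this. The phase of the $e^{W_{2r}/z}$ coefficient has three parts:
\begin{enumerate}[label=(\roman*)]
\item a part independent of $(r,b)$, coming from the prefactor, $1/\sin$ and $1/(-q;q^2)_\infty$;
\item the phase of the Gaussian integral along the outgoing descent direction (both contours leave $v_0'$ downwards);
\item $\arg g_r(v_0')$.
\end{enumerate}
For $z>0$ one finds $f_2''(\pi/12)=-4\sqrt{-3}$, giving Gaussian phase $-\pi/4$, with $g_2(v_0')=1$. One also finds $f_1''(\pi/6)=3-\sqrt{-3}$, giving Gaussian phase $-5\pi/12$, with $g_1(v_0')=e^{\pi i/6}$ for $b=-1$. (The paper's displayed values of $f_1''$ at $v_0$ and $v_0'$ are interchanged.)

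The $(r,b)$-dependent totals agree: $-\pi/4+0=-5\pi/12+\pi/6$. So $v_4$ carries on $e^{W_4/z}$ the same phase that $v_2$ carries on $e^{W_2/z}$, namely $\pi/4$. That phase for $v_2$ is borne out by the data. With $+\pi/4$ the asymptotic gives $V_2(175)\approx -132$, against the tabulated $-131$; with $-\pi/4$ one would get about $-381$. It is also the only assignment consistent with Theorem~\ref{thm:B}, where $V_4$ and $V_3$ carry $\cos(\cdots+\pi/4)$, i.e.\ $\arg\gamma=\pi/4$ in Theorem~\ref{Fourier: mainasym}.

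So ``this should confirm $(1-i)$'' is not a step you can carry out. What your argument actually proves is
\[
v_4(-e^{-z})=\sqrt{\tfrac{\pi}{8\sqrt3\,z}}\Bigl(e^{W_4/z}(1+i)+e^{\overline{W}_4/z}(1-i)\Bigr)(1+O(z)).
\]
The $v_3$ line needs the same interchange; its constant $-\sqrt{\pi/(2\sqrt3\, z)}$ is fine. The paper's one-line proof passes over the same point. The displayed $v_4$ and $v_3$ formulas appear to have $W_4$ and $\overline{W}_4$ swapped.

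A secondary point concerns absorbing the additive $O(z)$, and $v_1(e^{-z})+2=O(1)$, into $(1+O(z))$. This is fine on rays with $\arg z\neq 0$. On $\arg z=0$, however, the bracket is proportional to $z^{-1/2}\cos\bigl(D(e^{\pi i/3})/(2z)+\pi/4\bigr)$, whose zeros accumulate at $0$. There the multiplicative form holds only by convention, an imprecision the paper shares.
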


\begin{proof}
Note that the expression for $v_2(-e^{-z})$ and $v_4(-e^{-z})$ follows directly from Theorem~\ref{thm:asympv2}. 
To obtain the expression for $v_3(-e^{-z})$, we use the relationship \eqref{eq:pointed by KB} and recall from \cite[Theorem~1.2(2)]{folsom2023oscillating} that $v_1(e^{-z}) = O(1)$ as $z\to 0$ in the right half plane.
\end{proof}

\section{Asymptotics of the coefficients \texorpdfstring{$V_{j}(n)$}{}}
\label{sec:circle}

In this section, we establish a general result on the asymptotics of the coefficients of a $q$-hypergeometric series with prescribed asymptotic behaviour near roots of unity; see Theorem~\ref{Fourier: mainasym}.
We then explain how this result recovers Theorem~\ref{thm:B}.
The idea is to use the philosophy of Wright's circle method, which is particularly well-suited to non-modular functions.

Consider a $q$-hypergeometric series
\[
v(q) \coloneq \sum_{n \ge 0}V(n)q^n
\]
such that the following asymptotic condition holds in cones across each root of unity $\zeta$ of order $m$.
As $z \to 0$ along a ray in the right half-plane, $v$ satisfies the following asymptotics 
\begin{equation}
\label{eq: asymp v}
 v(\zeta e^{-z}) = \begin{cases}
 O(1) & \text{ if } m\not\equiv 2\mod{4}\\
 \br{e^{W/(d^2z)}\frac{\gamma(\zeta)}{\sqrt{z}}+e^{\overline{W}/(d^2z)}\frac{\overline{\gamma}(\zeta)}{\sqrt{z}}+\phi(\zeta)}\br{1+O(z)} & \text{ if } m\equiv 2\mod{4}.
 \end{cases}
\end{equation}
Here $d=m/2$ and $W$, $\gamma(\zeta)$, $\phi(\zeta)$ are complex numbers with $W \notin (\infty, 0]$.
As before, $\overline{W}$ and $\overline{\gamma}(\zeta)$ denote the conjugates of $W$ and $\gamma(\zeta)$ respectively.

The asymptotics for the coefficients of this $q$-series are given by the following theorem.

\begin{theorem}
\label{Fourier: mainasym}
Let $v(q)$ and $V(n)$ be as above.
As $n \to \infty$, the following equality is true
\[
V(n) = (-1)^{n}\abs{\gamma}\frac{e^{2\sqrt{n}\Re(\sqrt{W})}}{\sqrt{n \pi}}\cos(2\sqrt{n}\Im(\sqrt{W})+\arg(\gamma))\br{1+O\br{n^{-\frac{1}{2}}}} + O\br{\frac{e^{\sqrt{n}\,\re{\sqrt{W}}}}{\sqrt{n}}},
\]
where $\gamma\coloneq\gamma(-1)$.
\end{theorem}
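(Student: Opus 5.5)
We would follow Wright's circle method. By Cauchy's theorem,
\[
V(n)=\frac{1}{2\pi i}\oint_{|q|=e^{-\eta}}\frac{v(q)}{q^{n+1}}\,dq
=\frac{e^{n\eta}}{2\pi}\int_{-\pi}^{\pi} v\bigl(e^{-\eta+i\theta}\bigr)e^{-in\theta}\,d\theta ,
\]
and we choose the radius $\eta$ depending on $n$. Since the exponential growth comes from $e^{W/z}$ with $z=\eta-i(\theta-\pi)$ near $q=-1$, the natural choice is the saddle-point scaling $\eta=\Re\bigl(\sqrt{W/n}\bigr)$, or more conveniently $\eta\asymp n^{-1/2}$. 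The exact constant only matters for the error terms; the main term will be computed by deforming to the exact saddle points $z=\sqrt{W/n}$ and $z=\sqrt{\overline W/n}$.

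We split the circle into a major arc $|\theta-\pi|\le C\eta$ around $\zeta=-1$ and the complementary minor arc.

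\textbf{Minor arc.} We bound $|v(q)|$ for $q$ away from $-1$. Near a root of unity $\zeta$ of order $m\not\equiv 2\pmod 4$, the assumption \eqref{eq: asymp v} gives $O(1)$. Near $\zeta$ with $m=2d$, $d\ge 3$ odd, the growth is at most $|z|^{-1/2}\exp\bigl(\Re(W/z)/d^2\bigr)$. After the circle-method optimization this contributes at most about $e^{2\sqrt n\,\Re\sqrt W/d}\le e^{\sqrt n\,\Re\sqrt W}$. This yields the stated error $O\bigl(e^{\sqrt n\Re\sqrt W}/\sqrt n\bigr)$.

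To make the bound uniform in $\theta$, rather than only along rays, we would use a Farey dissection or a uniform version of the asymptotics. One can take from the proofs of Theorem~\ref{thm:asympv2} and Proposition~\ref{prp:asympv2} that the estimates hold uniformly in cones $|\arg z|\le \pi/2-\delta$. Points outside all cones of small-order roots of unity still need handling, and there we would bound $v$ crudely via the integral representation. This uniformity is the main obstacle. The approach we would try first is to take the major arc as the cone $|\arg z|<\pi/2-\delta$ at $-1$, and on the rest use a crude bound $|v(q)|\ll \exp\bigl(c/(1-|q|)\bigr)$ with $c$ smaller than $\Re(W)/|z|$-type growth, following \cite{folsom2023oscillating}.

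\textbf{Major arc.} Write $q=-e^{-z}$, so $q^{-n}=(-1)^ne^{nz}$; this is the source of the factor $(-1)^n$. Inserting \eqref{eq: asymp v} with $d=1$ and $\gamma=\gamma(-1)$, the integral becomes
\[
\frac{(-1)^n}{2\pi i}\int_{\eta-iC\eta}^{\eta+iC\eta}\Bigl(\gamma z^{-1/2}e^{W/z+nz}+\overline\gamma z^{-1/2}e^{\overline W/z+nz}+\phi\Bigr)\bigl(1+O(z)\bigr)\,dz .
\]
We then proceed term by term.

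The $\phi$ term is bounded by $O(\eta e^{n\eta})$, which is negligible.

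For the main terms we extend the path to a full Hankel-type contour, with the extension costing only small error, and evaluate by the modified Bessel integral
\[
\frac{1}{2\pi i}\int z^{-1/2}e^{W/z+nz}\,dz=\frac{\cosh\bigl(2\sqrt{nW}\bigr)}{\sqrt{\pi}\, n^{1/2}}\quad\bigl(\text{an } I_{-1/2}\text{ Bessel function}\bigr),
\]
which equals $\dfrac{e^{2\sqrt{nW}}}{2\sqrt{\pi n}}\bigl(1+O(n^{-1/2})\bigr)$ since $\Re\sqrt W>0$ (guaranteed by $W\notin(-\infty,0]$). Equivalently, one can do a direct saddle point at $z=\sqrt{W/n}$. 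The $O(z)$ correction contributes relative $O(n^{-1/2})$.

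Adding the conjugate term gives
\[
\frac{1}{2\sqrt{\pi n}}\Bigl(\gamma e^{2\sqrt{nW}}+\overline\gamma e^{2\sqrt{n\overline W}}\Bigr)=\frac{|\gamma|\,e^{2\sqrt n\Re\sqrt W}}{\sqrt{\pi n}}\cos\bigl(2\sqrt n\Im\sqrt W+\arg\gamma\bigr),
\]
multiplied by $(-1)^n$. This matches the claimed formula.

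Care is needed because the $(1+O(n^{-1/2}))$ factor is applied to each conjugate piece separately. We would state the result so that this error is absorbed as written, noting that the statement's multiplicative error is interpreted in this sense.
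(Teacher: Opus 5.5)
Your proposal is essentially the paper's proof: Cauchy's formula on the circle of radius $e^{-\Re\sqrt{W/n}}$, a cone-shaped major arc at $q=-1$ where the two exponential terms are integrated through the saddle point $\sqrt{W/n}$ (your closed form $\cosh(2\sqrt{nW})/\sqrt{\pi n}$ via $I_{-1/2}$ is just an exact version of the paper's saddle-point expansion), the conjugate pieces combined into the cosine, and the minor arc bounded by the $e^{2\sqrt{n}\,\Re\sqrt{W}/d}$-type contributions of roots of order $2d$, dominated by $d=3$. The two weak points you flag are also present, unresolved, in the paper's argument: the minor-arc step needs uniformity that neither argument supplies (on a fixed circle the trivial bound near a root of order $2d$ is only $e^{(1+1/d^{2})\sqrt{n}\,\Re\sqrt{W}}$, and the $O(1)$ regions give $e^{\sqrt{n}\,\Re\sqrt{W}}$ without the factor $1/\sqrt{n}$), and adding the two conjugate terms only yields the cosine up to an additive $O(n^{-1/2})$, which suffices for Theorem~\ref{thm:gen} but is weaker than the multiplicative form as stated.
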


\subsubsection*{Outline of the proof}
By Cauchy’s integral formula, we express $V(n)$ as a contour integral over a circle $C$ of suitably chosen radius inside the unit disk; see Figure~\ref{fig 7}.
We then decompose the contour into major and minor arcs.
The major arc, labelled as $C'$, is around the dominant pole at~$-1$.
We do not have any information on the modularity of $v(q)$, however, we have the asymptotics of $v(q)$ near roots of unity as in~\eqref{eq: asymp v}.
We use these asymptotics to evaluate the integral over $C'$, which contributes the main term $M(n)$.
The remaining portion of the contour forms the minor arc.
We bound the integral over the minor arc by the contribution from the 6-th order roots of unity, multiplied by the length of the minor arc, which is at most $2\pi$.
This yields an error term $E(n)$, which is almost always exponentially smaller than that of the main term; see Section~\ref{sec:pfs}.
Finally, we apply the saddle point method to simplify the resulting expressions and extract the stated asymptotics.

\begin{figure}[H]
	\centering
		\begin{tikzpicture}[scale=1.2]
		\hspace*{-20pt}
			\draw[->] (-2,0) -- (2,0) node[right] {$\Re (q)$};
			\draw[->] (0,-1.5) -- (0,1.5) node[above] {$\Im(q)$};

			\draw (0,0) circle (1);

			\draw[very thick, blue] (65:1) arc (65:295:1);
			\node at (-0.8,0.9) {\textcolor{blue}{$C'$}};

			\node at (1.2,0.3) {\large $C$};

			\draw[->, >=stealth, line width=1pt] (150:1) arc (150:152:1);
			\draw[-{>[scale=1]}] (320:1) arc (320:322:1);

			\draw[dashed] (0,0) -- (30:1);
			\node at (0.4,0.4) {\tiny{$e^{-\lambda}$}};

			\fill (-1.3,0) circle (0.5mm) node[below] {$-1$};
			\fill (1.3,0) circle (0.5mm) node[below] {$1$};
			\fill (0,0) circle (0.5mm) node[below right] {$0$};
		\end{tikzpicture}
 \caption{The contour $C$ and the major arc $C'$ (where $\lambda > 0$).}
 \label{fig 7}
	\end{figure} 

We now turn to the details of the proof.
\begin{proof}[Proof of Theorem~\ref{Fourier: mainasym}]
Using Cauchy's integral formula, we have
\[
V(n)=\frac{1}{2\pi i}\int_C \frac{v(q)}{q^{n+1}} dq,
\]
where $C$ is traversed exactly once in the counter-clockwise direction as seen in Figure~\ref{fig 7}.

As usual, we partition the contour $C$ into major and minor arcs and split the integral accordingly into two parts: 
\begin{equation}
\label{eq: split}
\int_C=\int_{C'}+\int_{C\setminus C'}.
\end{equation}

\subsection*{Major arc estimate}
In this section, we evaluate the first integral in \eqref{eq: split}, which yields the asymptotic contribution from the major arc.
Consider
\[
M(n)=\frac{1}{2\pi i}\int_{C'} \frac{v(q)}{q^{n+1}} dq.
\]
We take the contour \(C\) to be the circle of radius \(e^{-\lambda}\), where \(\lambda \coloneqq \frac{\re{\sqrt{W}}}{\sqrt{n}}\).
The major arc \(C'\) is then parameterized by
\[
q = -e^{-\lambda + i\theta}, \qquad \theta \in [-\delta,\delta],
\]
for some parameter \(\delta > 0\).

We now make the change of variables \(q = -e^{-z}\), so that \(z\) runs from \(\lambda + i\delta\) to \(\lambda - i\delta\), obtaining
\begin{equation} 
\label{eq:cir1}
M(n)=\frac{(-1)^{n}}{2\pi i}\int_{\lambda-i\delta}^{\lambda+i\delta}\frac{v(-e^{-z})}{e^{-nz}} dz.
\end{equation} 
The asymptotics for the generating function is given by \eqref{eq: asymp v}
\begin{equation}
\label{eq:cirasy}
 v(-e^{-z}) = \br{e^{W/z}\frac{\gamma}{\sqrt{z}}+e^{\overline{W}/z}\frac{\overline{\gamma}}{\sqrt{z}}+\phi(-1)}\br{1+O(z)}.
\end{equation}

We now provide a rigorous argument to obtain the contribution from the first exponential term of the above asymptotics.
The contribution of the expression $e^{W/z}\frac{\gamma}{\sqrt{z}}$ to \eqref{eq:cir1} is
\[
\frac{(-1)^{n}\gamma}{2\pi i}\int_{\lambda-i\delta}^{\lambda+i\delta} e^{\tfrac W z+nz}z^{-\frac{1}{2}} dz,
\]
which upon taking $\delta = \sqrt{\tfrac{\abs{W}}{n}}$ and applying the change of variables $z \mapsto z/\sqrt{n}$ becomes
\[
\frac{(-1)^{n}\gamma}{2n^{1/4}\pi i}\int_{\re{\sqrt{W}}-i\sqrt{\abs{W}}}^{\re{\sqrt{W}}+i\sqrt{\abs{W}}}e^{\sqrt{n}(\tfrac W z+z)}z^{-\frac{1}{2}} dz.
\]
We now simplify this integral, which is well-suited for an application of the saddle-point method.
To this end, define
\[
g(z)\coloneq \frac{W}{z}+z.
\]
It is easy to check that $\sqrt{W}$ is a saddle point of $g$.
Indeed, by direct computation 
\[
g'(\sqrt{W})=0.
\]

We now deform the contour so that it passes through this saddle point.
Denote the new contour by $\Gamma$ (cf.~Figure~\ref{fig:Gamma'}).
Then the integral can be written as
\begin{equation}
\label{eq:cir3}
\frac{(-1)^{n}\gamma}{2n^{1/4}\pi i}\int_{\Gamma} e^{\sqrt{n}g(z)}z^{-\frac{1}{2}} dz.
\end{equation}

With the change of variables 
$z=\sqrt{W}+iwn^{-1/4}$, 
the contour $\Gamma$ is mapped to a contour $\Gamma^\prime$ passing through the origin.
We choose $\Gamma$ so that the corresponding contour $\Gamma'$ has a horizontal tangent at the origin (cf.~Figure~\ref{fig:Gamma'}).

\begin{figure}[H]
\begin{tikzpicture}[scale=1.5, >=stealth]

\begin{scope}

\draw[thick, ->] (-2,0) -- (2,0);
\draw[thick, ->] (0,-2) -- (0,2);

\node at (-1.5,1.3) {$z$-plane};

\draw[dashed,->, thick] (0,0) -- (1.3,1.7);
\draw[dashed,->, thick] (0,0) -- (1.3,-1.7);

\fill[blue] (1,1.3) circle (1pt);
\fill[blue] (1,-1.3) circle (1pt);
\fill (0,1.3) circle (1pt);
\fill (0,-1.3) circle (1pt);
\fill (1,0) circle (1pt);

\node[left, font=\scriptsize] at (0,1.3) {$\sqrt{\abs{W}}$};
\node[left, font=\scriptsize] at (0,-1.3) {$-\sqrt{\abs{W}}$};
\node[above, font=\scriptsize] at (1.4,0) {$\re{\sqrt{W}}$};

\draw[blue, thick] (1,1.3) -- (1,-0.4);
\draw[blue, ->, thick] (1,-1.3) -- (1,-0.3);

\node[left, font=\small] at (1,-0.3) {\textcolor{blue}{$\Gamma$}};

\fill[blue] (1,-0.8) circle (1pt);
\node[right, font=\scriptsize] at (1,-0.8) {\textcolor{blue}{$\sqrt{W}$}};
\fill (1,0.8) circle (1pt);
\node[right, font=\scriptsize] at (1,0.8) {$\sqrt{\overline W}$};

\end{scope}
\hspace{-1cm}
\draw[->, bend left=20] (3.0,0.5) to (4.5,0.5);
\node at (3.75,0.9) {$z \mapsto w$};

\begin{scope}[shift={(6.5,0)}]

\draw[->, thick] (-1.9,0) -- (2.9,0);
\draw[->, thick] (0,-2) -- (0,2);

\node at (1,1.3) {$w$-plane};

\draw[blue, ultra thick] (0.9,0) -- (2.4,0);
\draw[blue,->, ultra thick] (-0.8,0) -- (1,0);
\fill[blue] (-0.8,0) circle (1pt);
\fill[blue] (2.4,0) circle (1pt);
\fill[blue] (0,0) circle (1pt);
\node[below, font=\tiny] at (-1.05,0) {$-n^{1/4}\br{\sqrt{\abs{W}} + \im{\sqrt{W}}}$};
\node[below, font=\tiny] at (2,0) {$n^{1/4}\br{\sqrt{\abs{W}} - \im{\sqrt{W}}}$};

\node[above] at (1,0) {\textcolor{blue}{$\Gamma'$}};
\node[above] at (0.2,0) {\textcolor{blue}{$0$}};

\end{scope}

\end{tikzpicture}
\caption{The contours $\Gamma$ and $\Gamma'$.}
\label{fig:Gamma'}
\end{figure}

We now expand the functions in the integrand about the saddle point $\sqrt{W}$, expressing them as functions of $w$.
We have
\begin{align*}
\sqrt{n}g(z)&=\sqrt{n}\sum_{r=0}^\infty \frac{g^{\br{r}}(\sqrt{W})}{r!}\br{iw}^r n^{-\frac{r}{4}}\\
&=2\sqrt{nW}-W^{-\frac{1}{2}}w^2+\sum_{r=3}^\infty \br{-i}^r W^{\frac{1}{2}(1-r)} w^r n^{\frac{2-r}{4}},
\end{align*}
which implies
\begin{equation}
\label{eq:cir4}
e^{\sqrt{n}g(z)}= e^{2\sqrt{nW}}e^{-W^{-\frac{1}{2}}w^2}\br{1+O\br{\sum_{r=1}^\infty n^{-\frac{r}{4}}\widetilde{p}_r(w)}},
\end{equation}
where each $\widetilde{p}_r(w)\in\CC[w]$.
Similarly, we have
\begin{equation}
\label{eq:cir5}
z^{-\frac{1}{2}}=W^{-\frac{1}{4}}\br{1+\sum_{r=1}^\infty \frac{\br{-\frac{iw}{2}}^r \br{2r-1}!!}{r!} W^{-\frac{r}{2}}n^{-\frac{r}{4}}}.
\end{equation}
Therefore, substituting \eqref{eq:cir4} and \eqref{eq:cir5} into \eqref{eq:cir3} yields
\[
\frac{(-1)^{n}\gamma}{2\sqrt{n}\pi} e^{2\sqrt{nW}} W^{-\frac{1}{4}} \int_{\Gamma^\prime} e^{-W^{-\frac{1}{2}}w^2}\br{1+\sum_{r=1}^\infty n^{-\frac{r}{4}} p_r(w)} dw,
\]
where each $p_r(w)\in\CC[w]$ arises from multiplying the appropriate bracketed terms in \eqref{eq:cir4} and \eqref{eq:cir5}.

As $n\rightarrow\infty$, we see that the above expression is asymptotic to 
\begin{align*}
\frac{(-1)^{n}\gamma}{2\sqrt{n}\pi} e^{2\sqrt{nW}} W^{-\frac{1}{4}} \int_{-\infty}^\infty e^{-W^{-\frac{1}{2}}w^2}\br{1+\sum_{r=1}^\infty n^{-\frac{r}{4}} p_r(w)} dw
= \frac{(-1)^{n}\gamma}{2 \sqrt{n\pi}} e^{2\sqrt{nW}} \br{1+O\br{n^{-\frac{1}{2}}}}.
\end{align*}
The polynomial terms in \eqref{eq:cir4} and \eqref{eq:cir5} are odd functions of $w$ when $r$ is odd; thus, the corresponding terms in the above integral cancel and the integral of the second term is $O\left(n^{-\tfrac{1}{2}}\right)$.

Analogously, we obtain that as $n \to \infty$ the contribution from the second term in \eqref{eq:cirasy} is 
\[
\frac{(-1)^{n}\overline{\gamma}}{2 \sqrt{n\pi}} e^{2\sqrt{n\overline{W}}} \br{1+O\br{n^{-\frac{1}{2}}}}.
\]
A simple calculation shows that the final term in \eqref{eq:cirasy} contributes
\[
O\br{\frac{e^{\sqrt{n}\,\re{\sqrt{W}}}}{\sqrt{n}}}.
\]
Putting everything together, we obtain that as $n \to \infty$
\[
M(n)=\br{\frac{(-1)^{n}\gamma}{2 \sqrt{n\pi}} e^{2\sqrt{nW}} + \frac{(-1)^{n}\overline{\gamma}}{2 \sqrt{n\pi}} e^{2\sqrt{n \overline{W}}}} \br{1+O\br{n^{-\frac{1}{2}}}} + O\br{\frac{e^{\sqrt{n}\,\re{\sqrt{W}}}}{\sqrt{n}}}.
\]
This expression can be written as
\begin{equation*}
\begin{split}
M(n)&=(-1)^{n}\frac{e^{2\sqrt{n}\Re(\sqrt{W})}}{\sqrt{n\pi}}\br{\Re(\gamma)\cos(2\sqrt{n}\Im(\sqrt{W}))-\Im(\gamma)\sin(2\sqrt{n}\Im(\sqrt{W}))}\br{1+O\br{n^{-\frac{1}{2}}}}\\ 
&\qquad \qquad \qquad \qquad \qquad + O\br{n^{-\tfrac 1 2} e^{\sqrt{n}\,\re{\sqrt{W}}}}\\ 
&=(-1)^{n}\abs{\gamma}\frac{e^{2\sqrt{n}\Re(\sqrt{W})}}{\sqrt{n \pi}}\cos(2\sqrt{n}\Im(\sqrt{W})+\arg(\gamma))\br{1+O\br{n^{-\frac{1}{2}}}} + O\br{\frac{e^{\sqrt{n}\,\re{\sqrt{W}}}}{\sqrt{n}}}.
\end{split}
\end{equation*}
This gives the contribution from the major arc.

We next estimate a bound for the second integral in \eqref{eq: split} along the minor arc.

\subsection*{Minor arc estimate} 
The associated error term in \eqref{eq: split} is given by
\[
E(n)\coloneq\frac{1}{2\pi i}\int_{C\setminus C'}\frac{v(q)}{q^{n+1}} dq.
\]

Let $\zeta$ be a root of unity of order $m\neq2$, where $m\equiv 2\mod 4$.
Using the asymptotics from \eqref{eq: asymp v} and then applying the saddle point method as in the analysis of the major arc contribution, we find that the contribution of $\zeta$ can be written as a sum of two integrals of the form 
\[
\frac{\zeta^{-n}K{(\zeta)}}{2n^{1/4}\pi i}\int_{\Gamma_d} e^{\sqrt{n}\br{\frac{V}{d^2 z}+z}}z^{-\frac{1}{2}}dz,
\]
together with an error term of size 
\[
O\br{\frac{e^{\sqrt{n}\,\re{\sqrt{W}}}}{\sqrt{n}}},
\]
where $K(\zeta) = \gamma(\zeta) \text{ and } V=W$, or $K(\zeta) = \overline{\gamma}(\zeta) \text{ and } V=\overline{W}$.
Here $\Gamma_d$ is a contour passing through the corresponding saddle point, as described in the previous section.
As before, these integrals have the asymptotic contributions 
\[
\frac{\zeta^{-n}K{(\zeta)}}{2\sqrt{n\pi}} e^{\tfrac 2 d\sqrt{nV}} \br{1+O\br{n^{-\frac{1}{2}}}} \text{ as } n \to \infty.
\]

The dominant contribution comes from the $6$-th ordered roots of unity.
Therefore, as explained in the outline of the proof, we bound the entire error term $E(n)$ by the contribution from the $6$-th ordered roots multiplied by $2\pi$.
This gives
\[
E(n) = O\br{\frac{e^{\frac{2}{3}\sqrt{n}\,\re{\sqrt{W}}}}{\sqrt{n}}} + O\br{\frac{e^{\sqrt{n}\,\re{\sqrt{W}}}}{\sqrt{n}}}
= O\br{\frac{e^{\sqrt{n}\,\re{\sqrt{W}}}}{\sqrt{n}}},
\]
which completes the proof of the theorem.
\end{proof}

\subsection{Special cases of the main result}
We now restrict Theorem~\ref{Fourier: mainasym} to members of the family of $q$-hypergeometric series $\{\VVV_{r,b}(q): {(r,b)\in\{1,2\}\times \Z}\}$ defined in \eqref{def: qgeneral}.
This allows us to recover Theorem~\ref{thm:B} of the introduction; see Section~\ref{intro}.
\begin{corollary}
\label{cor: asympmain}
Fix $r\in\{1,2\}$ and $b\in\Z$.
Let $W_{2r}$ be as defined in \eqref{eq:Wdef}.
As $n \to \infty$,
\begin{align*}
V_{r,b}(n) 
&=(-1)^{n}\frac{e^{2\sqrt{n}\Re(\sqrt{W_{2r}})}}{\sqrt{2rn\sqrt{3}}}\cos\br{2\sqrt{n}\Im(\sqrt{W_{2r}})
+\frac{2\*r-1-2\*b}{12} \pi
}\br{1+O\br{n^{-\frac{1}{2}}}}\\
& \;+ O\br{\frac{ e^{\sqrt{n}\,\re(\sqrt{W_{2r}})}}{\sqrt{n}}}.
\end{align*}
\end{corollary}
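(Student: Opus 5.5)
The corollary is a direct specialization of Theorem~\ref{Fourier: mainasym} to $v=\VVV_{r,b}$, with $W=W_{2r}$ and $\gamma=\gamma_{r,b}(-1)$. The plan has three steps.

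First, I verify that $\VVV_{r,b}$ satisfies hypothesis~\eqref{eq: asymp v}.
\begin{itemize}
\item For roots of unity $\zeta$ of order $m\not\equiv 2 \mod 4$, Theorem~\ref{thm:asympv2O1} gives $\VVV_{r,b}(\zeta e^{-z})\to\VVV_{r,b}(\zeta)=O(1)$ as $z\to0$.
\item For $m=2d$ with $d$ odd, Theorem~\ref{thm:asympv2} gives exactly the required shape, with $W=W_{2r}$, $\gamma(\zeta)=\gamma_{r,b}(\zeta)$ and $\phi(\zeta)=\phi_{r,b}(\zeta)$ as in~\eqref{eq:defgammaphi}.
\end{itemize}
I also check that $W_{2r}\notin(-\infty,0]$. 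This holds because $\Im(W_{2r})=D(e^{\pi i/3})/2>0$, so the principal square root $\sqrt{W_{2r}}$ has positive real part and the exponential growth in Theorem~\ref{Fourier: mainasym} is genuine.

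Second, I apply Theorem~\ref{Fourier: mainasym}. This gives
\[
V_{r,b}(n)=(-1)^n\abs{\gamma}\frac{e^{2\sqrt n\Re(\sqrt{W_{2r}})}}{\sqrt{n\pi}}\cos\bigl(2\sqrt n\Im(\sqrt{W_{2r}})+\arg\gamma\bigr)\bigl(1+O(n^{-1/2})\bigr)+O\Bigl(\frac{e^{\sqrt n\Re(\sqrt{W_{2r}})}}{\sqrt n}\Bigr),
\]
where $\gamma=\gamma_{r,b}(-1)$.

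Third, I insert the explicit value from~\eqref{defgammazeta1}, namely $\gamma_{r,b}(-1)=\sqrt{\pi/(2r\sqrt3)}\,\exp\bigl(\tfrac{2r-1-2b}{12}\pi i\bigr)$. Then $\abs{\gamma}/\sqrt{\pi}=1/\sqrt{2r\sqrt3}$, which merges with $1/\sqrt n$ into $1/\sqrt{2rn\sqrt3}$. Also $\arg\gamma=\tfrac{2r-1-2b}{12}\pi$, taken modulo $2\pi$, which is harmless inside the cosine. This reproduces the stated formula.

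The only step needing real care is the consistency of the constant $\gamma_{r,b}(-1)$ with the saddle-point computation in Section~\ref{sec:asympgenf}. I would recheck that $\gamma_{r,b}(-1)=e^{bi\pi/(6r)}\sqrt{\pi/f_r''(v_0')}$ up to the normalizing factors. Here $f_1''(v_0')=3+\sqrt{-3}=2\sqrt3\,e^{\pi i/6}$ and $f_2''(v_0')=-4\sqrt{-3}=4\sqrt3\,e^{-\pi i/2}$. I would pay particular attention to the branch of the square root and to the sign $g_r(v_0')=\pm1$ coming from $\sign(\Re(v/\varphi))$.

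As a sanity check, for $r=1$, $b=-1$ the phase is $\pi/4$ and the amplitude is $1/\sqrt{2\sqrt3}=\alpha_2$. This matches Theorem~\ref{thm:B} and the factor $(1+i)$ in Corollary~\ref{cor:v2v3v4}.
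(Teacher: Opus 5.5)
Your reduction is the paper's proof: verify~\eqref{eq: asymp v} via Theorems~\ref{thm:asympv2O1} and~\ref{thm:asympv2}, apply Theorem~\ref{Fourier: mainasym}, then read off $\abs{\gamma}/\sqrt{\pi}=1/\sqrt{2r\sqrt3}$ and $\arg\gamma$ from~\eqref{defgammazeta1}. Your arithmetic and your check for $(r,b)=(1,-1)$ are correct.

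The problem is the consistency check you deferred. Carried out, it fails for $r=2$.
\begin{itemize}
\item The term $e^{W_{2r}/z}$ comes from the saddle $v_0'=\pi/(6r)$. There $g_r(v_0')=e^{-biv_0'}=e^{-b\pi i/(6r)}$, not $e^{+b\pi i/(6r)}$ as you wrote. The paper's own evaluation of~\eqref{finalintzeta1} produces the conjugate factor $e^{b\pi i/(6r)}$ at $v_0$.
\item Equivalently, at $q=-e^{-z}$ the $n$-th summand of $\VVV_{r,b}$ is $(-1)^n e^{-(2n^2+bn)z}/(e^{-z};e^{-2z})_{rn}$. So $b$ enters only through $e^{-bnz}$, evaluated at the saddle $nz=i\pi/(6r)$.
\item Nothing else in the saddle-point computation depends on $b$, so $\gamma_{r,b}(-1)=e^{-b\pi i/(6r)}\gamma_{r,0}(-1)$.
\item For $r=1$ this agrees with~\eqref{defgammazeta1}. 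For $r=2$,~\eqref{defgammazeta1} has $e^{-b\pi i/6}$ where it should have $e^{-b\pi i/12}$. The value should be $\gamma_{2,b}(-1)=\sqrt{\pi/(4\sqrt3)}\,e^{(3-b)\pi i/12}$.
\item Separately, $f_1''(v_0')=4+2e^{-2\pi i/3}=3-\sqrt{-3}$, not $3+\sqrt{-3}$. The paper has the two values swapped for $r=1$, and you copied that.
\end{itemize}

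So your argument, like the paper's, proves the displayed formula for $r=1$ and all $b$, but for $r=2$ only when $24\mid b$. That includes $b=0$, which is all that is needed for $V_4$, hence for $V_3$ and Theorem~\ref{thm:B}. For $r=2$ and $24\nmid b$, the phase $\frac{3-2b}{12}\pi$ in the statement should be $\frac{3-b}{12}\pi$. In general the phase is $\frac{2r-1}{12}\pi-\frac{b}{6r}\pi$.

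To make the proof sound, derive $\gamma_{r,b}(-1)$ from the saddle-point analysis rather than quoting~\eqref{defgammazeta1}. Then restrict or correct the statement for $r=2$.
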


\begin{proof}
The asymptotics for $V_{r,b}(n)$ follow directly from Theorems~\ref{thm:asympv2O1},~\ref{thm:asympv2}, and~\ref{Fourier: mainasym}, upon substituting the corresponding values of $W=W_{2r}$ from \eqref{eq:Wdef} and $\gamma = \gamma_{r,b}(-1)$ from \eqref{defgammazeta1}.
\end{proof}

Taking $r=1, \; b=-1$ and $r=2, \; b=0$ in Corollary~\ref{cor: asympmain}, we obtain the asymptotics as $n \to \infty$ for $V_2(n)$ and $V_4(n)$ respectively, 
\begin{align*}
 V_2(n) = (-1)^{n}\frac{e^{2\sqrt{n}\Re(\sqrt{W_{2}})}}{\sqrt{2n\sqrt{3}}}&\cos\br{2\sqrt{n}\Im(\sqrt{W_{2}}) + \frac{\pi}{4}}\br{1+O\br{n^{-\frac{1}{2}}}} + O\br{\frac{ e^{\sqrt{n}\,\re(\sqrt{W_{2}})}}{\sqrt{n}}},\\
 V_4(n) = (-1)^{n}\frac{e^{2\sqrt{n}\Re(\sqrt{W_{4}})}}{2\sqrt{n\sqrt{3}}}&\cos\br{2\sqrt{n}\Im(\sqrt{W_{4}}) + \frac{\pi}{4}}\br{1+O\br{n^{-\frac{1}{2}}}} + O\br{\frac{ e^{\sqrt{n}\,\re(\sqrt{W_{4}})}}{\sqrt{n}}}.
\end{align*}
The relation between $v_1$, $v_3$, and $v_4$ given by \eqref{eq:pointed by KB} yields
\[
V_3(n) = -(-1)^{n}\frac{e^{2\sqrt{n}\Re(\sqrt{W_{4}})}}{\sqrt{n\sqrt{3}}}\cos\br{2\sqrt{n}\Im(\sqrt{W_{4}}) + \frac{\pi}{4}}\br{1+O\br{n^{-\frac{1}{2}}}} + O\br{\frac{ e^{\sqrt{n}\,\re(\sqrt{W_{4}})}}{\sqrt{n}}},
\]
as $n \to \infty$.
This follows as the special case $b=0$ of the following more general corollary.

\begin{corollary}
\label{cor: asympU}
Fix $b\in\Z$.
Let $\kappa_1$ and $\kappa_2$ be non-zero integers.
Define
\begin{equation}
\label{def: U}
\UUU_{2,b}(q) \coloneqq \kappa_1 v_1(\pm q) + \kappa_2\VVV_{2,b}(q) \eqqcolon \sum_{n\ge0} U_{2,b}(n) q^n.
\end{equation}
As $n \to \infty$,
\begin{align*}
U_{2,b}(n) = (-1)^{n}\kappa_2\frac{e^{2\sqrt{n}\Re(\sqrt{W_{4}})}}{2\sqrt{n\sqrt{3}}}&\cos\br{2\sqrt{n}\Im(\sqrt{W_{4}}) +\frac{3-2b}{12}\pi
}\br{1+O\br{n^{-\frac{1}{2}}}} + O\br{\frac{ e^{\sqrt{n}\,\re(\sqrt{W_4})}}{\sqrt{n}}}.
\end{align*}
\end{corollary}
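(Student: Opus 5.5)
The plan is to verify that $\UUU_{2,b}(q)$ satisfies the asymptotic hypothesis \eqref{eq: asymp v} of Theorem~\ref{Fourier: mainasym}, with $W=W_4$ and $\gamma(\zeta)=\kappa_2\gamma_{2,b}(\zeta)$, and then read off the coefficient asymptotics. By linearity, $U_{2,b}(n)=\kappa_1(\pm1)^nV_1(n)+\kappa_2V_{2,b}(n)$. So it is enough to show that the $v_1(\pm q)$ summand stays bounded near every root of unity. That boundedness puts the term into the constant part $\phi(\zeta)$ (or into the $O(1)$ case), and leaves the exponential terms untouched.

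First I treat $\VVV_{2,b}$. For $m\not\equiv2\pmod4$, Theorem~\ref{thm:asympv2O1} gives $\VVV_{2,b}(\zeta e^{-z})=O(1)$. For $m=2d\equiv2\pmod4$, Theorem~\ref{thm:asympv2} with $r=2$ gives the required shape, with $\gamma(\zeta)=\kappa_2\gamma_{2,b}(\zeta)$, $W=W_4=D(e^{\pi i/3})i/2$, and constant term $\kappa_2\phi_{2,b}(\zeta)$. The condition $W_4\notin(-\infty,0]$ holds because $W_4$ is purely imaginary and nonzero.

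Next I handle the $v_1$ term. I invoke \cite[Theorem~1.2]{folsom2023oscillating}, which gives the behaviour of $v_1(q)$ near every root of unity. I expect it to show that $v_1(\xi e^{-z})$ is bounded as $z\to0$ in cones whenever $\xi$ has order not divisible by $4$. This is the case needed here, since $\pm\zeta$ with $\zeta$ of order $\equiv2\pmod4$ has odd order or order $\equiv2\pmod4$. When $\zeta$ has order divisible by $4$, $v_1$ may grow like $e^{c/z}$ with a smaller exponent (its growth comes from $q\to\pm i$-type points, governed by the $(-q^2;q^2)_n$ denominators). I expect this to be the main obstacle. On the minor arcs of the proof of Theorem~\ref{Fourier: mainasym}, such contributions must still be absorbed into $O(n^{-1/2}e^{\sqrt n\Re\sqrt{W_4}})$.

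I will check this absorption by comparing exponents. The coefficients $V_1(n)$ grow like $\exp(c_1\sqrt n)$, with $c_1$ explicit from \cite{folsom2023oscillating}. It then suffices to verify numerically that $c_1\le\Re\sqrt{W_4}=\sqrt{D(e^{\pi i/3})/2}\cos(\pi/4)\approx0.504$. If that inequality held with room to spare, the whole $\kappa_1 v_1$ contribution would simply be a term in the error. This is also the same route that produced $V_3$ from \eqref{eq:pointed by KB} in Corollary~\ref{cor:v2v3v4}.

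Finally, with $\gamma=\kappa_2\gamma_{2,b}(-1)$, I substitute into Theorem~\ref{Fourier: mainasym} using \eqref{defgammazeta1} for $r=2$. This gives $|\gamma|=|\kappa_2|\sqrt{\pi/(4\sqrt3)}$ and $\arg\gamma=\frac{3-2b}{12}\pi$, up to adding $\pi$ when $\kappa_2<0$. Hence $|\gamma|/\sqrt{\pi n}=|\kappa_2|/(2\sqrt{n\sqrt3})$. Absorbing the sign of $\kappa_2$ into the cosine yields exactly the stated formula.
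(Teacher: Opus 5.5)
Your route is the paper's. You write $U_{2,b}(n)=\kappa_1(\pm1)^nV_1(n)+\kappa_2V_{2,b}(n)$, take the $\kappa_2$-part from Corollary~\ref{cor: asympmain} (Theorem~\ref{Fourier: mainasym} applied to $\kappa_2\VVV_{2,b}$), and absorb $V_1(n)$ into the error using \cite[Theorem~1.3]{folsom2023oscillating}. Your computation of $\abs{\gamma}$, of $\arg\gamma=\frac{3-2b}{12}\pi$ and of the sign of $\kappa_2$ is correct. You are also right to drop your opening idea: $\UUU_{2,b}$ itself does not satisfy \eqref{eq: asymp v}, because $v_1$ blows up as $q\to\pm i$, so the splitting by linearity is genuinely needed.

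The gap is in how you absorb $V_1(n)$. You propose a numerical check $c_1\le\Re\sqrt{W_4}$ and hope for room to spare, but the two rates are \emph{equal}. Put $q=ie^{-z}$ and $t=2nz$. To leading order the summand of $v_1$ is $\exp\bigl(\frac1z(-\frac{t^2}{8}+\frac{\pi^2}{48}-\frac18\Li_2(e^{-2t}))\bigr)$, up to phases. This is exactly one quarter, phases included, of the exponent function of $\VVV_{2,0}$ at $q=-e^{-z}$ with $u=4nz=2t$. Hence $v_1(\pm ie^{-z})$ carries the factors $e^{W_4/(4z)}$ and $e^{\overline W_4/(4z)}$ with a $z^{-1/2}$ prefactor. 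The circle method then gives $V_1(n)$ of size $n^{-1/2}e^{2\sqrt n\Re\sqrt{W_4/4}}=n^{-1/2}e^{\sqrt n\Re\sqrt{W_4}}$ times a bounded oscillating factor. In other words $c_1=\tfrac12\sqrt{D(e^{\pi i/3})}=\Re\sqrt{W_4}$ exactly.

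So numerics cannot settle the comparison. Because the exponents coincide, the power of $n$ decides the matter: you need $V_1(n)=O\bigl(n^{-1/2}e^{\sqrt n\Re\sqrt{W_4}}\bigr)$. That is precisely what the full statement of \cite[Theorem~1.3]{folsom2023oscillating} provides, including its $n^{-1/2}$ prefactor; an exponent comparison alone does not. A weaker bound such as $n^{-1/4}e^{c_1\sqrt n}$ would fit neither the stated error term nor the factor $(1+O(n^{-1/2}))$ near zeros of the cosine. If you replace the numerical check by this precise input, your argument coincides with the paper's.
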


\begin{proof}
As $n\to\infty$, we see from \cite[Theorem~1.3]{folsom2023oscillating} that $V_1(n)$ is absorbed into the error term $O\br{\frac{ e^{\sqrt{n}\,\re({\sqrt{W_4}})}}{\sqrt{n}}}$ of $V_{2,b}(n)$.
Hence, the result follows from Corollary~\ref{cor: asympmain}.
\end{proof}

\section{The Sign Patterns of \texorpdfstring{$V_j(n)$}{}}
\label{sec:pfs}

We now prove a general result that shows that the asymptotic expansion for the coefficients $V_j(n)$ in Theorem~\ref{thm:B} implies the almost alternating sign pattern explained in Theorem~\ref{thm:sgnpattern}.
The result we prove in Theorem~\ref{thm:gen} essentially shows that when a sequence of integers $V(n)$ satisfies the asymptotic formula of the type appearing in Theorem~\ref{Fourier: mainasym}, it exhibits an almost alternating sign pattern.
In particular, this sign pattern holds for the coefficients $V_{r,b}(n)$ and $U_{2,b}(n)$ of the general $q$-series $\VVV_{r,b}(q)$ and $\UUU_{2,b}(q)$ defined in \eqref{def: qgeneral} and \eqref{def: U}, respectively.

To prove the main result in this section, we need a technical lemma which provides a quantitative measure of how far a given sequence is from being equidistributed.

\begin{lemma}
\label{lemma: schoissegeier}
Given a sequence of real numbers $(x_n)$ in $[0,1)$, define \textit{discrepancy} $D_N((x_n))$ by
\[
D_N((x_n)) \coloneq \sup_{0\le c\le d\le 1}\abs{\frac{\abs{\cbr{x_1,\dots,x_N}\cap [c,d)}}{N}-(d-c)}.
\]
If $R>0$, then $(x_n) \coloneq (R\sqrt{n})\mod 1$ has discrepancy 
\[
D_N((x_n))\ll N^{-1/2},
\]
as $N\to\infty$.
\end{lemma}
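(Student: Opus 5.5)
Since $R\sqrt{n}$ grows very slowly, I will not use exponential sums. Instead I will count points directly, by splitting the index range into blocks on which the integer part $\lfloor R\sqrt{n}\rfloor$ is constant.

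For each integer $k\ge 0$ let $I_k=\{n\ge1: k\le R\sqrt n<k+1\}=\{n: k^2/R^2\le n<(k+1)^2/R^2\}$. Then $|I_k|=\frac{2k+1}{R^2}+O(1)$. On $I_k$ we have $x_n=R\sqrt n-k$, which increases from near $0$ to near $1$. Fix $0\le c\le d\le1$. Then $x_n\in[c,d)$ for $n\in I_k$ exactly when $(k+c)^2/R^2\le n<(k+d)^2/R^2$. The number of such $n$ is therefore
\[
\frac{(k+d)^2-(k+c)^2}{R^2}+O(1)=\frac{2k(d-c)+d^2-c^2}{R^2}+O(1)=(d-c)|I_k|+O(1),
\]
where the implied constant depends only on $R$ and is uniform in $c,d,k$. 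This $O(1)$ rounding error per block is the key local estimate.

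Next I sum over blocks. Given $N$, let $K=\lfloor R\sqrt N\rfloor$. The indices $1,\dots,N$ are covered by the complete blocks $I_0,\dots,I_{K-1}$ together with a partial block $I_K\cap[1,N]$, which has size $O(K/R^2+1)=O(\sqrt N)$. Summing the local estimate over the complete blocks gives
\[
\#\{n\le N: x_n\in[c,d)\}=(d-c)\sum_{k<K}|I_k|+O(K)+O(\sqrt N)=(d-c)N+O(\sqrt N),
\]
because $\sum_{k<K}|I_k|=N-O(\sqrt N)$, and the partial block contributes at most $O(\sqrt N)$ on either side. Dividing by $N$ and taking the supremum over $c,d$ (all constants are uniform) gives $D_N\ll N^{-1/2}$.

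The only delicate point is the uniformity of the $O(1)$ error in each block and the handling of the partial last block. Both are routine, since counting the integers in an interval differs from its length by at most $1$. There is no real obstacle. One could alternatively invoke the Erd\H{o}s--Tur\'an inequality with van der Corput bounds, but that yields a weaker exponent, so the direct counting argument is the route I would take.
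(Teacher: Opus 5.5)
Your proof is correct, and it takes a genuinely different route from the paper's. The paper does not prove the bound; it imports it and splits into two cases.
\begin{itemize}
\item For $R^2\notin\mathbb{Q}$ it cites Schoi{\ss}engeier's result $\sqrt{N}\,D_N\to 1/(4R)$.
\item For $R^2\in\mathbb{Q}$ it cites Baxa's theorem that $\limsup_N \sqrt{N}\,D_N<\infty$.
\end{itemize}

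Your decomposition into the level sets of $\lfloor R\sqrt n\rfloor$ is elementary, self-contained and uniform in $R$, with no dependence on the arithmetic of $R^2$.
\begin{itemize}
\item On each block, counting $n$ with $x_n\in[c,d)$ is just counting integers in an explicit interval.
\item The per-block error is below $2+R^{-2}$: one unit from rounding the count, one from rounding $|I_k|$, and $(d-c)\,|c+d-1|/R^2\le R^{-2}$ from replacing $2k+c+d$ by $2k+1$.
\item There are $K\le R\sqrt N$ complete blocks.
\item The incomplete block has fewer than $2\sqrt N/R+R^{-2}+1$ elements.
\end{itemize}
Together these give the explicit bound $D_N\le (2R+3R^{-1})N^{-1/2}+O_R(N^{-1})$.

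The citations additionally give the sharp constant and a matching lower bound in the irrational case. The paper never uses that extra information. The lemma is only applied in Lemma~\ref{lem: Var interval}, which needs nothing more than $\#\{n\le M: x_n\in I\}\ll M|I|+M^{1/2}$, and your argument yields this directly. Running the same argument with $\lfloor R\sqrt n+\beta\rfloor$ would also prove the shifted version, Lemma~\ref{lemma: schoissegeier mod}, without the reduction step.

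One small point: the paper's definition literally counts the set $\{x_1,\dots,x_N\}\cap[c,d)$, while you count indices with multiplicity, which is the standard and intended reading. The two can differ when $R^2\in\mathbb{Q}$; for example, $x_n=0$ at every square when $R=1$. They differ only by $O(\sqrt N)$, so your bound covers either reading.
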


\begin{proof}
From \cite[Corollary 6]{Schoss}, in the case $R^2 \notin \mathbb{Q}$ we have
\[
\lim_{N\to\infty} \sqrt{N}\, D_N((x_n)) = \frac{1}{4R}.
\]
In particular,
\[
D_N((x_n)) \ll N^{-1/2}.
\]
\footnote{To derive \cite[(44)]{folsom2023oscillating}, a reference was made to the work of Schoi{\ss}engeier which only addresses the case $R^2\not\in \Q$.
The work of Baxa mentioned here, is also needed to obtain the desired inequality in \textit{loc. cit.}}In the case $R^2 \in \mathbb{Q}$, \cite[Theorem~4]{baxa1998discrepancy} implies that
\[
\limsup_{N\to\infty} \sqrt{N}\, D_N((x_n)) < \infty,
\]
and hence
\[
D_N((x_n)) \ll N^{-1/2}. \qedhere
\]
\end{proof}

The following variant with a fixed shift of the sequence $(R\sqrt{n})$ is a consequence of Lemma~\ref{lemma: schoissegeier}.

\begin{lemma}
\label{lemma: schoissegeier mod}
Fix $R>0$ and a real number $\beta$.
The sequence $(y_n) \coloneq (R\sqrt{n} + \beta)\mod 1$ has discrepancy 
\[
D_N((y_n))\ll N^{-1/2},
\]
as $N\to\infty$.
\end{lemma}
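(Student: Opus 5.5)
The plan is to reduce the shifted statement to Lemma~\ref{lemma: schoissegeier} by observing that a fixed translation modulo $1$ changes the discrepancy by at most a bounded factor. Write $x_n \coloneq R\sqrt{n} \bmod 1$ and $\{\beta\}\coloneq\beta-\lfloor\beta\rfloor\in[0,1)$. Then $y_n = (x_n+\{\beta\}) \bmod 1$, so $y_n$ is the image of $x_n$ under the rotation $T\colon t\mapsto t+\{\beta\}\bmod 1$ of the circle $\R/\Z$. It is convenient to work with the extreme discrepancy over all intervals (arcs) $[c,d)\subseteq[0,1)$, which is exactly how $D_N$ is defined.

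Fix $0\le c\le d\le 1$. Membership $y_n\in[c,d)$ is equivalent to $x_n\in T^{-1}([c,d))$, and $T^{-1}([c,d))=[c-\{\beta\},d-\{\beta\})\bmod 1$ is either a single interval $[c',d')\subseteq[0,1)$ of the same length, or the union of two disjoint intervals $[0,d')\cup[c',1)$ whose lengths sum to $d-c$. In the first case,
\[
\abs{\frac{\abs{\{y_1,\dots,y_N\}\cap[c,d)}}{N}-(d-c)} = \abs{\frac{\abs{\{x_1,\dots,x_N\}\cap[c',d')}}{N}-(d'-c')}\le D_N((x_n)).
\]
In the second case, I split the count and the length over the two pieces and apply the triangle inequality. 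This gives a bound of $2D_N((x_n))$. The reason is that each piece is an interval $[a,b)\subseteq[0,1)$ to which the definition of $D_N((x_n))$ applies directly; for the piece $[c',1)$, the half-open convention with $d=1$ is exactly covered by the supremum. Counting points with multiplicity, as is standard, causes no issue since the same convention is used for both sequences.

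Taking the supremum over $c,d$ gives $D_N((y_n))\le 2D_N((x_n))$. Lemma~\ref{lemma: schoissegeier} then yields $D_N((y_n))\ll N^{-1/2}$, with the implied constant depending only on $R$, and independent of $\beta$. There is no real obstacle here. The only point requiring care is the bookkeeping for the wrapped-around interval, together with the endpoint conventions $[c,d)$ and $d=1$, to make sure each piece is admissible in the definition of $D_N$.
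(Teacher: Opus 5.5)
Your proof is correct and follows essentially the same route as the paper: you pull back $[c,d)$ under the rotation by $\beta$, observe that the preimage is a single interval or two disjoint intervals of total length $d-c$, and deduce $D_N((y_n))\le 2D_N((x_n))$ before invoking Lemma~\ref{lemma: schoissegeier}. Your extra care with the endpoint conventions and the wrapped-around piece $[c',1)$ is a fine, if minor, tightening of the paper's bookkeeping.
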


\begin{proof}
Let $(x_n) \coloneq (R\sqrt{n})$.
If
\[
y_n\coloneq (x_n+\beta)\mod1,
\]
then for any interval $I\subset[0,1)$,
\[
y_n\in I
\quad\Longleftrightarrow\quad
x_n\mod1\in (I-\beta)\mod 1.
\]
The set $(I-\beta)\mod{1}$ is either a single interval in $[0,1)$ of length $\abs{I}$ or the union of two disjoint intervals $I_1, I_2 \subset [0,1)$, in which case
\[
\abs{I_1} + \abs{I_2} = \abs{I}.
\]
Hence 
\[
\#\{n\le N:y_n\in I\}
=
\#\{n\le N:x_n\mod 1\in I_1\} + \#\{n\le N:x_n\mod 1\in I_2\},
\]
where we set $I_1 = I$ and $I_2=\emptyset$ if $(I-\beta)\mod1$ is a single interval.
Therefore,
\[
\abs{\frac{\#\{n \le N : y_n \in I\}}{N} - \abs{I}}\le 2 D_N((x_n\mod 1)).
\]
Taking the supremum over all such $I\subset [0,1)$ yields
\[
D_N((y_n)) \le 2 D_N((x_n\mod1)).
\]
Using Lemma \ref{lemma: schoissegeier}, we thus have 
\[
D_N((y_n)) \ll N^{-1/2}. \qedhere
\]
\end{proof}

We now establish a lemma controlling the number of points lying in shrinking intervals, under the above discrepancy bound.

\begin{lemma}
\label{lem: Var interval}
Let $(x_n)$ be a sequence in $[0,1)$ with discrepancy
\[
D_N((x_n)) \ll N^{-1/2}.
\]
Fix $\vartheta_1, \vartheta_2 \in [0,1)$.
For $j=1,2$ and each $n$, define intervals
\[
I_{j,n} := \left[\vartheta_j - \frac{C}{\sqrt{n}},\, \vartheta_j + \frac{C}{\sqrt{n}}\right]\mod{1},
\]
for some constant $C>0$.
Then \[
A_N := \#\{n \le N : x_n \in I_{1,n} \cup I_{2,n}\} \ll N^{1/2}.
\]
In particular, $A_N/N \to 0$ as $N \to \infty$.
\end{lemma}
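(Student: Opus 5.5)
We count the $n\le N$ with $x_n$ in the shrinking interval $I_{1,n}\cup I_{2,n}$ by splitting $[1,N]$ into dyadic blocks. On each block the shrinking interval can be replaced by a fixed interval, and the discrepancy bound then controls the count.

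\textbf{Dyadic blocks.} Since the two intervals are handled identically and $A_N$ is at most the sum of the two counts, it suffices to bound
\[
A_N^{(j)}:=\#\{n\le N : x_n\in I_{j,n}\}
\]
for a single $j$. For $k\ge0$ let $N_k:=2^k$, and consider the block $B_k:=\{n: N_k< n\le 2N_k\}$, together with $n=1$. For $n\in B_k$ we have $C/\sqrt n\le C/\sqrt{N_k}$, so
\[
I_{j,n}\subseteq J_k:=\bigl[\vartheta_j - C N_k^{-1/2},\ \vartheta_j+CN_k^{-1/2}\bigr]\mod 1.
\]
The set $J_k$ is a union of at most two intervals of total length at most $2CN_k^{-1/2}$, with the convention that it is all of $[0,1)$ if $2CN_k^{-1/2}\ge1$. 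That case occurs only for boundedly many $k$ and contributes $O(1)$.

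\textbf{Counting on a block.} By the definition of $D_N$, for any interval $J$ we have $\#\{n\le M: x_n\in J\}\le M\abs{J}+MD_M$. The half-open versus closed endpoint distinction costs nothing, by approximating from outside. Applying this with $M=2N_k$, and splitting $J_k$ into at most two pieces as in the proof of Lemma~\ref{lemma: schoissegeier mod}, gives
\[
\#\{n\in B_k: x_n\in I_{j,n}\}\le \#\{n\le 2N_k: x_n\in J_k\}\le 2N_k\cdot 2CN_k^{-1/2} + 2\cdot 2N_k D_{2N_k}.
\]
By the hypothesis $D_M\ll M^{-1/2}$, the right-hand side is $\ll N_k^{1/2}$.

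\textbf{Summing.} Summing over $k\le \log_2 N$ yields a geometric series:
\[
A^{(j)}_N\ll 1+\sum_{2^k\le N}2^{k/2}\ll N^{1/2}.
\]
Hence $A_N\le A_N^{(1)}+A_N^{(2)}\ll N^{1/2}$, and therefore $A_N/N\ll N^{-1/2}\to0$.

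\textbf{Main obstacle.} The one place needing care is that the intervals depend on $n$, so the discrepancy cannot be applied directly to all $n\le N$ at once. If one used the single largest interval $I_{j,1}$ over the whole range, the bound would only be $O(N)$. The dyadic decomposition resolves this: within a block the interval length varies by at most a factor $\sqrt2$, so freezing it at its largest value loses only a constant factor. The discrepancy error $N_kD_{2N_k}\ll N_k^{1/2}$ is of the same order as the main term, so the discrepancy rate $N^{-1/2}$ is exactly what is needed to obtain the $N^{1/2}$ bound.
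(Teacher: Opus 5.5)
Your proposal is correct and follows essentially the same route as the paper: split $[1,N]$ into dyadic blocks, freeze $I_{j,n}$ at its largest size on each block, apply the discrepancy bound $\#\{n\le M: x_n\in I\}\ll M\abs{I}+M^{1/2}$ with $M$ equal to the block's upper endpoint, and sum the resulting geometric series. Your extra care with the wrap-around modulo $1$, with closed versus half-open endpoints, and with the constant $C$ in the radius only makes explicit details that the paper leaves implicit.
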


\begin{proof}
We partition the range $1 \le n \le N$ into dyadic intervals $2^k \le n < 2^{k+1}$.
Let us use the standard notation $A \asymp B$ to mean that there exist constants $k, K > 0$ such that $k\abs{B} \le \abs{A} \le K\abs{B}$.
Then in a dyadic interval, we have $n^{-1/2} \asymp 2^{-k/2}$ and
\[
I_{j,n} \subset I_{j,k},
\]
where $I_{j,k}$ is an interval in $[0,1)$ centered at $\vartheta_j$ of radius $2^{-k/2}$.
Thus,
\[
A_N \le \sum_{k \le \log_2 N} \#\{2^k \le n < 2^{k+1} : x_n \in I_{1,k} \cup I_{2,k}\}.
\]

By the discrepancy bound, for any interval $I \subset [0,1)$ we have
\[
\#\{n \le M : x_n \in I\} \ll M\abs{I} + M^{1/2}.
\]
Applying this with $M = 2^{k+1}$ and $I = I_{j,k}$ yields
\[
\#\{2^k \le n < 2^{k+1} : x_n \in I_{j,k}\} \le \#\{n \le 2^{k+1} : x_n \in I_{j,k}\} \ll 2^{k/2}.
\]

Summing over $k$ and $j=1,2$, we obtain
\[
A_N \ll \sum_{k \le \log_2 N} 2^{k/2} \ll N^{1/2}. \qedhere
\]
\end{proof}

We now prove the main result of this section.

\begin{theorem}
\label{thm:gen}
Let $V(n)$ be a sequence of integers satisfying
\[
V(n) \= (-1)^{n}\alpha\frac{e^{2\sqrt{n}\Re(\sqrt{W})}}{\sqrt{n}}
\cos\bigl(2\sqrt{n}\Im\bigl(\sqrt{W}\bigr)+ \beta\bigr)
\left(1+O(n^{-\frac{1}{2}})\right) + O\left(\frac{e^{\sqrt{n}\Re(\sqrt{W})}}{\sqrt{n}}\right)
\]
as $n\to \infty$, where $\alpha\neq 0$, $\beta$ are real numbers and $W$ is a complex number with $0 \le \arg W < \pi$.
If $\Im(\sqrt{W})\neq 0$, then $V(n)$ satisfies an almost alternating sign pattern in the sense of Theorem~\ref{thm:sgnpattern}.
In particular, for $r \in \{1,2\}$ and $b\in \Z$, the coefficient sequences $V_{r,b}(n)$ and $U_{2,b}(n)$ satisfy an almost alternating sign pattern.
\end{theorem}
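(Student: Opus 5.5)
Set $a\coloneqq \Re(\sqrt{W})$, $\tau\coloneqq \Im(\sqrt{W})$ and $\theta_n\coloneqq 2\sqrt{n}\,\tau+\beta$. Since $0\le \arg W<\pi$, we have $\arg\sqrt{W}\in[0,\pi/2)$, so $a>0$. By hypothesis $\tau\neq 0$. Dividing the asymptotic formula by $e^{2a\sqrt{n}}/\sqrt{n}$ gives
\[
(-1)^n V(n)\frac{\sqrt{n}}{e^{2a\sqrt{n}}} \= \alpha\cos(\theta_n) + O\bigl(n^{-1/2}\bigr) + O\bigl(e^{-a\sqrt{n}}\bigr) \= \alpha\cos(\theta_n)+O(n^{-1/2}).
\]
So the sign of $(-1)^nV(n)$ agrees with the sign of $\alpha\cos\theta_n$ whenever $\abs{\cos\theta_n}\ge C_0 n^{-1/2}$ for a suitable constant $C_0$. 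The plan is to show that the exceptional set where $\abs{\cos\theta_n}<C n^{-1/2}$ has density zero, using the discrepancy lemmas.

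The first step is to identify the exceptional set. Put $x_n\coloneqq \bigl(\tfrac{\abs{\tau}}{\pi}\sqrt{n}+\tfrac{\beta'}{2\pi}\bigr)\mod 1$, where the sign of $\tau$ is absorbed into $\beta'$ if needed (cosine is even, so we may replace $\theta_n$ by $-\theta_n$). Then $\cos\theta_n=\cos(2\pi x_n)$. The condition $\abs{\cos(2\pi x_n)}<Cn^{-1/2}$ forces $x_n$ to lie within $C'n^{-1/2}$ of $\vartheta_1=1/4$ or $\vartheta_2=3/4$. Lemma~\ref{lemma: schoissegeier mod} with $R=\abs{\tau}/\pi>0$ gives $D_N((x_n))\ll N^{-1/2}$, and Lemma~\ref{lem: Var interval} then shows the exceptional set has at most $O(N^{1/2})$ elements up to $N$, hence density $0$.

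Next, part (i). Off the exceptional set, $\abs{V(n)}\ge \abs{\alpha}\,C\,e^{2a\sqrt{n}}/n - O\bigl(e^{2a\sqrt n}/n\bigr)$. Choosing $C$ large relative to the implied constants makes this at least $c\,e^{2a\sqrt{n}}/n$, which tends to infinity.

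Then part (ii). We need the exceptional sets for $n$ and for $n+1$ to both be small, which holds because a union of two density-zero sets has density zero. We also need $\cos\theta_n$ and $\cos\theta_{n+1}$ to have the same sign. Since $\theta_{n+1}-\theta_n=O(n^{-1/2})$, a sign change of the cosine between $n$ and $n+1$ requires a zero of cosine within $O(n^{-1/2})$ of $\theta_n$. That case is already contained in the exceptional set after enlarging $C$. Outside the exceptional set, $\alpha\cos\theta_n$ and $\alpha\cos\theta_{n+1}$ therefore share a sign, so $V(n)$ and $V(n+1)$ carry opposite signs from the factor $(-1)^n$.

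The main obstacle I expect is bookkeeping the constants. The single constant $C$ has to dominate both the $O(n^{-1/2})$ relative error and the $\theta$-increment, so that one set of intervals $I_{j,n}$ in Lemma~\ref{lem: Var interval} handles both parts.

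For the final claim, apply Corollaries~\ref{cor: asympmain} and~\ref{cor: asympU}. Here $W_{2r}=\tfrac{D(e^{\pi i/3})i}{2}-\delta_{r,1}\tfrac{\pi^2}{24}$ has positive imaginary part, so $\arg W_{2r}\in(0,\pi)$. It follows that $\sqrt{W_{2r}}$ has argument in $(0,\pi/2)$, which gives $\Im\sqrt{W_{2r}}\neq0$. The constants $\alpha=1/\sqrt{2r\sqrt3}$ and $\alpha=\kappa_2/(2\sqrt{\sqrt3})$ are nonzero real numbers, so the hypotheses of the theorem hold.
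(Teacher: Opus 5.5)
Your proposal is correct and follows the paper's proof essentially step for step. You compare $(-1)^nV(n)$ with $\alpha\cos\theta_n$ wherever $|\cos\theta_n|\gg n^{-1/2}$, and show the complementary set has density zero via Lemmas~\ref{lemma: schoissegeier mod} and~\ref{lem: Var interval}. You then use $\theta_{n+1}-\theta_n=O(n^{-1/2})$ for sign stability, and read off the constants from Corollaries~\ref{cor: asympmain} and~\ref{cor: asympU}. Your write-up is somewhat more explicit than the paper's: you take the union of the exceptional sets for $n$ and $n+1$, and you verify that $\Re\sqrt{W}>0$ and $\Im\sqrt{W_{2r}}\neq 0$.
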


\begin{proof}
Let $M^*(n)$ and $E(n)$ be any sequences of real numbers such that 
\begin{align*}
& M^*(n) =(-1)^n\alpha\frac{e^{2\sqrt{n}\Re(\sqrt{W})}}{\sqrt{n}}
\cos\bigl(2\sqrt{n}\Im(\sqrt{W}) + \beta\bigr) \left(1+O(n^{-\frac{1}{2}})\right),\\
& E(n) =O\!\left(\frac{e^{\sqrt{n}\Re(\sqrt{W})}}{\sqrt{n}}\right).
\end{align*}
Let $x_n := 2\sqrt{n}\Im(\sqrt{W}) + \beta$.
We first show that
$\sign(V(n)) = \sign(M^*(n))$ whenever
\[
\abs{\cos(x_n)} \gg \frac{1}{\sqrt{n}}.
\]
Indeed, in this case
\[
\abs{M^*(n)} \sim \abs{\alpha}\frac{e^{2\kappa\sqrt{n}}}{\sqrt{n}}\abs{\cos(x_n)}
\gg \frac{e^{2\kappa\sqrt{n}}}{n},
\]
where $\kappa := \Re(\sqrt{W})$ and we have
\[
\abs{E(n)} \ll \frac{e^{2\kappa\sqrt{n}}}{n},
\]
so that $\abs{M^*(n)} \gg \abs{E(n)}$, which implies that $V(n)$ and $M^*(n)$ have the same sign.

\medskip

\noindent \textit{Claim:} The condition $\abs{\cos(x_n)}\gg\frac{1}{\sqrt{n}}$ holds for almost all $n$.

\medskip

\noindent \textit{Proof of claim:} Let $\vartheta_1,\vartheta_2 \in [0,2\pi)$ be the zeros of $\cos(x)$.
By Taylor expansion, for $j=1,2$ and $x$ in a neighbourhood of $\vartheta_j$, 
\[
\cos(x) = \pm (x - \vartheta_j) + O\bigl((x - \vartheta_j)^2\bigr),
\]
which implies $\abs{\cos(x)} \sim \abs{x - \vartheta_j}$.

Define $x_n'\coloneq \frac{x_n}{2\pi}\mod1$, $\vartheta_j'\coloneq \frac{\vartheta_j}{2\pi}$ and 
\[
\mathcal{E} := \left\{ n \in \mathbb{N} : \abs{x_n' - \vartheta_j'} < Cn^{-1/2} \text{ for some } j \in \{1,2\} \right\},
\]
for some $C>0$.

Applying Lemmas~\ref{lemma: schoissegeier mod} and~\ref{lem: Var interval} to the sequence $(x_n')$, it follows
\[
\frac{\#\{n \le N : n \in \mathcal{E}\}}{N} \to 0,
\]
so that $\mathcal{E}$ has natural density zero.
In particular, this proves the claim.

\medskip

Also note that for any real number $R$,  by the mean value theorem
\[
\cos(R\sqrt{n+1}+\beta) - \cos(R\sqrt{n}+\beta) = O(n^{-1/2}).
\]
So, whenever $\abs{\cos(R\sqrt{n} + \beta)}$ exceeds a sufficiently large constant multiple of $n^{-1/2}$, the sign of $\cos(R\sqrt{n} + \beta)$ is stable under increments of $n$.

Putting everything together, we conclude that the sign pattern of $M^*(n)$ alternates with parity outside a set of density zero, and therefore $V(n)$ and $V(n+1)$ have alternate signs for almost all $n$.

Moreover, for $n \notin \mathcal{E}$,
\[
\abs{V(n)} \ge \abs{M^*(n)} - \abs{E(n)} \gg \frac{e^{2\kappa\sqrt{n}}}{n}.
\]
Since $e^{2\kappa\sqrt{n}}/n \to \infty$, it follows that $\abs{V(n)} \to \infty$ away from a set of density zero.

To prove the statement about the specific coefficient sequence $V_{r,b}(n)$, we refer to Corollary~\ref{cor: asympmain} and substitute
\[
\alpha = \frac{1}{\sqrt{2r\sqrt{3}}}, 
\qquad
\beta = (-1)^{r+1}\frac{2(b+2)+r}{12r}\pi,
\qquad
W = W_{2r}.
\]
The case of $U_{2,b}(n)$ follows similarly from Corollary~\ref{cor: asympU} and the main assertion of this theorem.
\end{proof}

\begin{remark}
\label{rem: degenerate}
If $\Im(\sqrt{W}) = 0$ and $\beta$ is not an odd multiple of $\pi/2$, then the cosine term is constant and non-zero, so $V(n)$ has strictly alternating sign for all large $n$.
\end{remark}

The following corollary completes the proof of Theorem~\ref{thm:sgnpattern}.

\begin{corollary}
The sequence of coefficients $V_j(n)$ for $j\in \{2,3,4\}$ satisfies an almost alternating sign pattern in the sense of Theorem~\ref{thm:sgnpattern}.
\end{corollary}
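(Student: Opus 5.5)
The plan is to reduce the corollary to Theorem~\ref{thm:gen}: for each $j\in\{2,3,4\}$ I will exhibit the asymptotic formula that theorem requires, with explicit real $\alpha\neq 0$, real $\beta$, and $W$ satisfying $0\le \arg W<\pi$ and $\Im(\sqrt W)\neq 0$.

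\textbf{The asymptotic formulas.}
\begin{itemize}
\item[$j=2$:] Since $v_2=\VVV_{1,-1}$, Corollary~\ref{cor: asympmain} gives $V_2(n)$ with $\alpha=1/\sqrt{2\sqrt3}$, $\beta=\pi/4$ and $W=W_2$.
\item[$j=4$:] Since $v_4=1+\VVV_{2,0}$, we have $V_4(n)=V_{2,0}(n)$ for $n\ge1$. So Corollary~\ref{cor: asympmain} gives $\alpha=1/(2\sqrt[4]{3})$, $\beta=\pi/4$ and $W=W_4$; the constant term only affects $n=0$.
\item[$j=3$:] By \eqref{eq:pointed by KB}, $v_3(q)=v_1(-q)-2v_4(q)+2$, so $V_3(n)=U_{2,0}(n)$ for $n\ge1$ with $\kappa_1=1$ and $\kappa_2=-2$. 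Corollary~\ref{cor: asympU} then gives $\alpha=-1/\sqrt[4]{3}$, $\beta=\pi/4$ and $W=W_4$.
\end{itemize}
Thus all three sequences have the shape of Theorem~\ref{thm:B}. The hypothesis that $V(n)$ is a sequence of integers holds, since these are coefficients of $q$-series with integer coefficients.

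\textbf{Checking the hypotheses on $W$.} This is the only genuine point to verify.
\begin{itemize}
\item $W_4=\tfrac i2 D(e^{\pi i/3})$ with $D(e^{\pi i/3})\approx1.0149>0$. Hence $\arg W_4=\pi/2\in[0,\pi)$, and the principal root is $\sqrt{W_4}=\sqrt{D/2}\,e^{\pi i/4}$, so $\Im\sqrt{W_4}=\sqrt{D/2}\,\tfrac{1}{\sqrt2}\neq0$ and $\Re\sqrt{W_4}>0$.
\item $W_2=-\pi^2/24+\tfrac i2D$ has positive imaginary part, so $\arg W_2\in(\pi/2,\pi)$. Its principal square root has argument in $(\pi/4,\pi/2)$, so again $\Im\sqrt{W_2}\neq0$ and $\Re\sqrt{W_2}>0$.
\end{itemize}
Positivity of $\Re\sqrt W$ ensures that $e^{2\sqrt n\Re\sqrt W}/n\to\infty$, which the argument of Theorem~\ref{thm:gen} uses for part (i).

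\textbf{Conclusion.} Applying Theorem~\ref{thm:gen} to each $j$ yields both (i) and (ii) of Theorem~\ref{thm:sgnpattern}. Nothing here is a real obstacle. The step needing the most care is the bookkeeping in the $j=3$ case: confirming that the $v_1(-q)$ contribution is absorbed into the error term $O(e^{\sqrt n\Re\sqrt{W_4}}/\sqrt n)$, as already argued in Corollary~\ref{cor: asympU} via \cite[Theorem~1.3]{folsom2023oscillating}, and that the finitely many initial terms and the additive constants are irrelevant for density statements.
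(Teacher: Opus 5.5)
Your proposal is correct and matches the paper's proof. The paper specializes Theorem~\ref{thm:gen} through Corollary~\ref{cor: asympmain} with $(r,b)=(1,-1)$ and $(2,0)$ for $V_2$ and $V_4$, and through Corollary~\ref{cor: asympU} with $b=0$ for $V_3$; your explicit check that $0\le\arg W_{2r}<\pi$ and $\Im\sqrt{W_{2r}}\neq 0$, which the paper leaves implicit, is a useful addition.
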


\begin{proof}
Take  $(r,b)$ to be $(1,-1)$ and $(2,0)$ for $V_{r,b}(n)$, and $b=0$ for $U_{2,b}(n)$.
This yields the sign patterns of $V_2(n)$, $V_4(n)$, and $V_3(n)$, respectively.
\end{proof}

\section{Future directions and further examples}
\label{sec:future}

We provide additional examples and a new family of $q$-series whose coefficients appear to behave in a way similar to the coefficients $V_j(n)$.
In all these examples, the behaviour can presumably be explained as follows.
As $q$ approaches certain roots of unity of order greater than 1, the asymptotics of the $q$-series become large.
Following the circle method, this leads to exponential growth and oscillation (possibly degenerate as in Remark~\ref{rem: degenerate}) in the coefficients. 
By Theorem~\ref{thm:gen} (or a suitable generalization), the coefficients \textit{should} satisfy an almost (alternating) sign pattern.

In recent times, $q$-series with similar patterns have been noticed; see for example, \cite[Section~6]{KKJparbias}.
Moreover, the function $v_3(q)$ also appears in \cite[Equation (3.1)]{AGARWAL1984291}.
After making the substitution
$\alpha=\frac{q}{\tau}$, $\beta=\delta=-q$, $t=\tau$, $\gamma\to 0,$ and $\tau\to 0$,
we have
\begin{equation}
\label{eq:pointed by KB2}
v_3(q) 
\= \frac1{(-q;q)_\infty} v_5(q) w(q)
+\frac2{(-q;q)_\infty}
\sum_{n\geq 0}\frac{q^{n(n+1)/2}}{(q;q)_n}
\sum_{m\geq 0} \frac{(-1)^mq^{m(m+1)/2+nm}}{(q^{n+1};q)_m(-q^n;q)_m},
\end{equation}
where, using \cite[Entry~9.4.1]{RlostI},
\[
v_5(q)\;\coloneq\; \sum_{n\geq 0}(-1)^n\frac{q^{n(n+1)/2}}{\br{q^2;q^2}_n},\qquad\quad
 w(q)\;\coloneq\;
 1 - \sum_{k>0} q^{(3k^2 - k) / 2} (1 - q^k).
\]
The function $v_5(q)$ can also be found in \cite[p.10]{Rlost} and its coefficients seem to satisfy a similar sign pattern as described in Theorem~\ref{thm:sgnpattern}.

We now introduce an infinite family of $q$-series $\{v^{\{k\}}(q)\}$ whose coefficients \textit{appear} to exhibit an almost alternating sign pattern of length $k$.
For $k\in\Z_{\geq2}$, we define 
\[
v^{\{k\}}(q)
\coloneqq 
\br{q,q}_\infty\sum_{n\geq 1}\frac{q^{n(n-1)}}{\br{-q;q^k}_n}
\eqqcolon
\sum_{n\geq 0} V^{\{k\}}(n)q^n.
\]

We notice that when $k=2$, the coefficients $V^{\{2\}}(n)$ behave similar to the coefficients $S(n)$ of $\sigma$ as in~\eqref{conjS}.
This is illustrated in Figures \ref{fig:S} and \ref{fig:vk2}, where the colours distinguish odd and even values of $n$.
The sequence $V^{\{2\}}(n)$ appears to take all integer values infinitely often and to satisfy $\limsup_{n\to\infty} \abs{V^{(2)}(n)} = \infty$, as is known for $\sigma$~\cite{andrews1988partitions}.

\begin{figure}[H]
\centering
\begin{minipage}{.45\textwidth}
\centering
\includegraphics[width=1\linewidth]{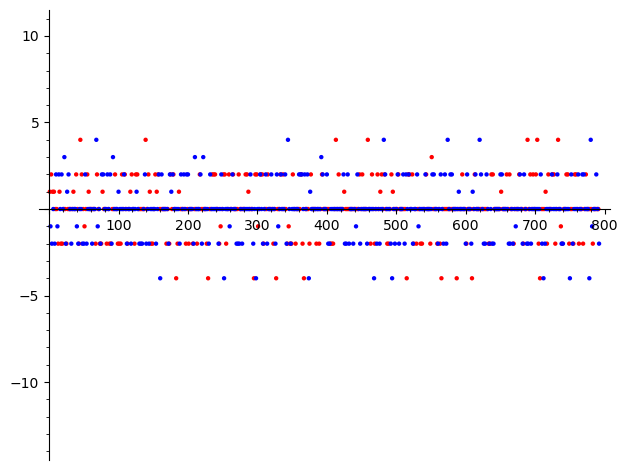}
\caption{Coefficients $S(n)$ for $0\le n\le 800$.}
\label{fig:S}
\end{minipage}
\begin{minipage}{.45\textwidth}
\centering
\includegraphics[width=1\linewidth]{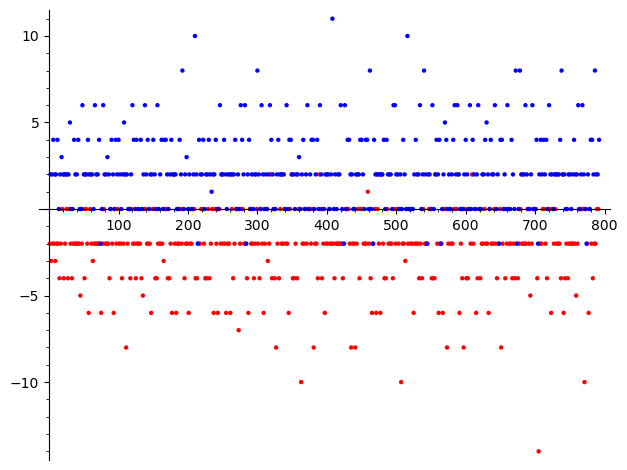}
\caption{Coefficients $V^{\{2\}}(n)$ for $0\le n\le 800$.}
\label{fig:vk2}
\end{minipage}
\end{figure}

The distinct colours also indicate an almost alternating sign pattern for the coefficients $V^{\{2\}}(n)$ in the sense of Theorem~\ref{thm:sgnpattern}.
Moreover, the $q$-series satisfies the following relation with the Ramanujan's third order mock theta function $\nu$
\[
v^{\{2\}}(q) = (q;q)_\infty \; \nu(q),
\]
which follows from the definition of both the $q$-series.
Further it was shown in \cite[Equation~(2.6)]{mortenson2013three} that $\nu(q)$ has the following Hecke-type series representation.
\begin{align*}
\nu(q) \coloneqq \sum_{n\geq 0}\frac{q^{n(n+1)}}{\br{-q,q^2}_{n+1}}  &= \frac{(-q^2;q^2)_\infty}{(q^2;q^2)_\infty}\sum_{n\geq 0} (1-q^{2n+1})q^{3n^2 + 2n} \sum_{j=-n}^n (-1)^j q^{-j^2}\\
& = \frac{1}{(-q; -q)_{\infty}}\sum_{n\geq 0} \sum_{j={-n}}^n (-1)^{n-j(j+1)/2} q^{2n^2 + 2n - j(j+1)/2}.
\end{align*}

We next consider the example for $k=3$ to explain the idea of almost alternating sign pattern of length $k$.
We have
\[
v^{\{3\}}(q) \=1
 - 2\*q
 + 2\*q^2
 - 3\*q^3
 + 2\*q^4
 -q^5
 +q^6
 +q^7
 -q^8
 +2\*q^9
 -2\*q^{10}
 +2\*q^{11}
 +\cdots
\]
and the coefficients $V^{\{3\}}(n)$ for $0\le n\le200$ are plotted in Figure~\ref{fig:coeffV23}.
Each colour tracks a fixed congruence class modulo $3$.
We observe that the coefficients follow a very structured sign pattern.
For almost all $n$, three consecutive coefficients $V^{\{3\}}(n), V^{\{3\}}(n+1), V^{\{3\}}(n+2)$ contain two positive and one negative number or vice versa.

\begin{figure}
\centering
\includegraphics[width=0.6\linewidth]{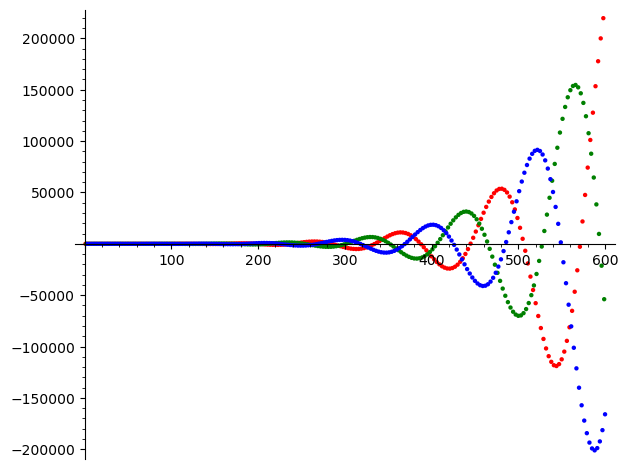}
\caption{The coefficients $V^{\{3\}}(n)$ for $0\leq n\leq 200$.}
\label{fig:coeffV23}
\end{figure}

More generally, we see similar patterns for the other values of $k$.
Our observations lead us to the following question -- an answer to which will provide improved understanding of (some) $q$-series.

\begin{ques}
We ask whether, for almost all $n$, the sequence $V^{\{k\}}(n), V^{\{k\}}(n+1),\dots, V^{\{k\}}(n+k-1)$ of $k$ consecutive coefficients satisfies the following sign pattern: 
\begin{enumerate}[label=(\alph*)]
\item if $k$ is even, it contains an equal number of positive and negative terms.
\item if $k$ is odd, it contains either $\lfloor\frac{k}2\rfloor$ positive and $\lceil\frac{k}2\rceil$ negative terms or vice-versa.
\end{enumerate}
\end{ques}

More precisely, the asymptotics of the coefficients $V^{\{k\}}(n)$ appear to depend on $n \mod{k}$.
Based on numerical experiments, for each $n_0=0,\ldots,k-1$, the asymptotics of the subsequence $V^{\{k\}}(n_0+\ell k)$ with $\ell \geq 0$ should have an oscillating and an exponential factor, which would explain the sign pattern in the previous question.

\bibliographystyle{abbrv}
\bibliography{refs.bib}

\end{document}